\documentclass[11pt,reqno]{article}

\usepackage[T1]{fontenc}
\usepackage[utf8]{inputenc}
\usepackage{microtype}

\usepackage{fullpage}
\usepackage{amsmath,amssymb,amsthm,mathtools}
\mathtoolsset{showonlyrefs=false}

\usepackage{bm}
\usepackage{mathrsfs}
\usepackage{bbm}

\usepackage{pgfplots}
\pgfplotsset{compat=1.18}
\usepackage{tikz}

\theoremstyle{plain}
\newtheorem{theorem}{Theorem}[section]
\newtheorem{lemma}[theorem]{Lemma}
\newtheorem{proposition}[theorem]{Proposition}
\newtheorem{corollary}[theorem]{Corollary}

\theoremstyle{definition}
\newtheorem{definition}[theorem]{Definition}
\newtheorem{example}[theorem]{Example}

\theoremstyle{remark}
\newtheorem{remark}[theorem]{Remark}

\DeclareMathOperator{\dom}{dom}
\DeclareMathOperator{\cl}{cl}
\DeclareMathOperator{\an}{an}
\DeclareMathOperator{\interior}{int}

\DeclareMathOperator{\depth}{depth}
\DeclareMathOperator{\width}{width}

\DeclareMathOperator{\size}{size}
\DeclareMathOperator{\fr}{fr}

\DeclareMathOperator{\esssup}{ess sup}
\DeclareMathOperator{\argmin}{argmin}
\DeclarePairedDelimiter\abs{\lvert}{\rvert}
\DeclarePairedDelimiter\norm{\lVert}{\rVert}

\newcommand{\R}{\mathbb{R}}

\newcommand{\N}{\mathbb{N}}
\newcommand{\eps}{\varepsilon}

\usepackage{graphicx}
\usepackage{caption}
\usepackage{subcaption}

\usepackage[backend=biber,style=alphabetic,sorting=nyt,maxbibnames=10]{biblatex}
\usepackage{hyperref}
\hypersetup{
  pdftitle={Fast approximation and learning of binary classification tasks in o-minimal structures using ReLU neural networks},
  pdfauthor={Clemens Kinn, Philipp Petersen},
  pdfsubject={Machine Learning and O-minimality},
  pdfkeywords={Machine Learning, O-minimality},
  colorlinks=true,
  citecolor=black,
  linkcolor=black,
  urlcolor=black,
  pdfstartview={FitH}
}

\usepackage{enumitem}
\usepackage{todonotes}
\usepackage{xcolor}

\title{Fast approximation and learning of binary classification tasks in o-minimal structures using ReLU neural networks}

\author{
    Clemens Kinn\footnotemark[1]
\and Philipp Petersen\footnotemark[1]
}

\begin{document}

\maketitle
\begin{abstract}
We study binary classification problems whose decision sets are given by definable sets in o-minimal expansions of the real field. Motivated by cell decomposition of definable sets, we introduce traceable sets as a classical proxy for definable decision regions and analyze their approximation by ReLU neural networks. Under uniform bounds on the number of connected components and suitable $C^m$ extensions for the boundary functions, we prove that characteristic functions of traceable subsets of $[-1/2,1/2]^n$ can be approximated in $L^p$ to accuracy $\varepsilon>0$ by ReLU neural networks of size $\mathcal{O}(\varepsilon^{-p(n-1)/m})$, with depth independent of $\varepsilon$ and polynomially bounded weights. This establishes quantitative approximation rates for certain definable collections in o-minimal structures using ReLU neural networks.
The same approach also yields the stated approximation rates for a subclass of definable maps $[-1/2,1/2]^n \to \R$. 
We then combine the approximation capabilities with entropy estimates for ReLU neural network classes to obtain statistical learning rates for empirical risk minimization with hinge loss. For $N$ uniformly distributed samples, the resulting classifiers achieve expected misclassification error of order $N^{-m/(m+pn-p)}$ up to an arbitrarily small polynomial loss.
\end{abstract}

\noindent {\bf Keywords:} ReLU neural networks, binary classification,
o-minimal structures, approximation rates, learning rates.

\noindent {\bf AMS subject classification:} 41A25, 03C64, 68T07.

\renewcommand{\thefootnote}{\fnsymbol{footnote}}
\footnotetext[1]{Faculty of Mathematics, University of Vienna, 1090 Vienna, Austria,        (\href{mailto:clemens.kinn@univie.ac.at}{clemens.kinn@univie.ac.at}, \href{mailto:philipp.petersen@univie.ac.at}{philipp.petersen@univie.ac.at})}

\section{Introduction}\label{sec:intro}

Deep neural networks have become one of the most successful paradigms in modern data analysis, outperforming traditional methods in a wide range of tasks, from image recognition and speech processing to natural language understanding \cite{lecun2015deep}. Motivated by their empirical success, the mathematical principles governing their approximation power and statistical learnability remain an active area of research.

In particular, machine learning can be applied successfully to study binary classification problems.
In a binary classification task, each data point is assigned one of two classes. Mathematically, we assume that the data vectors are contained in the box $[-1/2,1/2]^n$ for some $n\in\N$ which gets partitioned into two sets $A$ and $A^c := [-1/2,1/2]^n\setminus A$. The classifier $\chi_A:[-1/2,1/2]^n \to \{0,1\}$ assigns to each data vector $v\in [-1/2,1/2]^n$ its label, $0$ if $v\in A$ and $1$ if $v\in A^c$.
Classification problems are ubiquitous in applications, and their theoretical study using neural networks has led to work connecting approximation theory, statistical learning theory, and geometric regularity of decision boundaries \cite{petersen2018optimal,petersen2021optimal,kim2021fast,meyer2023optimal,caragea2023neural,garcia2025high}.

Any quantitative result on approximation or learnability relies on a structural assumption on the target set $A$. Some approaches rely on smoothness conditions on the boundary of $A$, as studied in \cite{petersen2018optimal}. Also, boundaries given by Barron-type function classes are well-adapted to approximation and learning by ReLU neural networks  \cite{caragea2023neural,petersen2021optimal,garcia2025high}. In these cases, optimal approximation and estimation rates are known. In general, the larger the hypothesis class becomes, the weaker the achievable rates.

In this paper, we propose a different framework to study classification based on tame geometry and mathematical model theory. More precisely, we study binary classification tasks where $A$ is definable in an o-minimal expansion of the real field and isolate properties allowing for fast approximation and learning. Definable sets in an o-minimal structure form a distinguished set system containing all semi-algebraic subsets of Euclidean space, allowing for cell decomposition theorems and a well behaved notion of dimension \cite{Dries_1998}. In particular, definable collections admit uniform finiteness properties, such as a uniform bound on the number of connected components and finite VC dimension \cite{laskowski1992vapnik}, which makes them a natural candidate class for learning-theoretic analysis.
Another key feature of o-minimal structures is that the composition of two definable maps is again definable. This property is not guaranteed in a Barron-type setting, leading to a workaround via the introduction of compositional function spaces \cite{ma2022barron}. But deep neural networks are built by repeatedly composing affine maps with nonlinear activation functions. Consequently, many neural network architectures fit naturally into the o-minimality framework \cite{kratsios2026feedforwardneuralnetworkdefinable}.

\subsection{Our contribution}

Inspired by the cell decomposition theory in o-minimal structures, we isolate the analytic features needed for fast approximation with ReLU neural networks, leading to the introduction of the class of \emph{traceable sets}.

Under certain technical assumptions, our first main result establishes quantitative approximation rates for characteristic functions of traceable sets using ReLU neural networks. More precisely, given a uniform bound on the number of connected components and a suitable boundary regularity condition depending on a smoothness parameter $m$, for any $0<\varepsilon<1/2$ and $1\leq p < \infty$, the indicator function $\chi_A$ of a definable set $A \subseteq [-1/2,1/2]^n$ can be approximated in $L^p([-1/2,1/2]^n)$ up to error $\varepsilon$ by ReLU neural networks of size $\mathcal{O}(\varepsilon^{-p(n-1)/m})$, with depth bounded independently of $\varepsilon$ and weights growing at most polynomially in $1/\varepsilon$.
The main result also enables approximation of certain definable maps $f:[-1/2,1/2]^n\to\R$ by ReLU neural networks. Definable functions are piecewise smooth and we show that under technical assumptions on the smoothness domains, the same rate as above can be achieved for the $L^p$ approximation of $f$ using ReLU neural networks, see section \ref{sec:piecewisesmooth}.

The second main result concerns recovering the decision region $A$ for which the approximation theorem holds from randomly sampled points in $[-1/2,1/2]^n$. We derive learning rates for empirical risk minimization over the corresponding ReLU neural network approximation classes. For $N$ independent samples drawn from the uniform distribution on $[-1/2,1/2]^n$, we obtain an upper bound of order $N^{-m/(m+pn-p)}$ up to an arbitrarily small polynomial loss $N^\kappa$. The proof combines the approximation theorem with entropy-based estimation arguments in the spirit of \cite{petersen2021optimal}, thereby providing a unified approximation-and-learning theory for classification problems with suitable definable decision boundaries.

Taken together, these results suggest that definability in o-minimal structures over the real field provides a mathematically robust notion of regularity for classification theory. It captures a wide range of decision sets encountered in practice while still enabling strong approximation and estimation rates for ReLU neural networks.

\subsection{Related work}

We briefly outline some historical development and the interaction between machine learning and model theory that motivates the present work.

An early conceptual bridge between the two areas was established by the observation that definable collections in model-theoretic structures with the non-independence property (NIP) admit uniform combinatorial bounds. In particular, definable collections in NIP structures have finite VC dimension and are therefore PAC learnable \cite{laskowski1992vapnik}. 
Since o-minimal expansions of the real field satisfy NIP, this result implies PAC learnability for definable collections arising in o-minimal structures. 
Recently, the fundamental theorem of agnostic PAC learnability has been revisited from a model theoretic perspective studying minimal measurability requirements and giving qualitative learning guarantees for binary classification in o-minimal structures and more generally for NIP structures \cite{krapp2025measurabilityfundamentaltheoremstatistical}.

Following this foundational connection, further interactions between machine learning and model theory have been explored. For example, stability-theoretic ideas have been used to study online learning \cite{chase2019model}. These developments suggest that the classical combinatorial parameters governing generalization in machine learning can often be interpreted through model-theoretic regularity properties.

Model-theoretic methods have also appeared in the study of the complexity of neural network hypothesis classes. Early work by Karpinski and Macintyre derived polynomial bounds on the VC dimension of sigmoidal neural networks and neural networks with Pfaffian activation functions \cite{karpinski1997polynomial}. This provides an explicit example where tame geometry and definability yield quantitative complexity estimates for classes of neural networks. Recently, these results have been revisited in the modern setting of machine learning for many other model classes, showing definability and PAC learnability of most practically used neural networks \cite{kratsios2026feedforwardneuralnetworkdefinable}.

On the geometric side, o-minimal structures form a promising framework for tame mathematics, providing strong structural results for definable sets and functions. These include the classical cell decomposition theorems \cite{Dries_1998}, but also Lipschitz cell decompositions, triangulations, and stratifications of definable sets \cite{pawlucki2008lipschitz,Dries_1998,LoiTastratifications, fischer2007minimal}. Such tools provide a rich geometric and analytic language for describing non-pathological subsets of Euclidean space, and suggest that definability can serve as a flexible regularity assumption for decision boundaries in classification problems.

More recently, o-minimality has also been proposed as a rigorous setting for studying aspects of deep learning such as optimization problems arising in training procedures of neural networks \cite{bareilles2025deep}. This further supports the perspective that o-minimal structures provide a natural unifying framework in which approximation theory, statistical learning, and optimization may be studied simultaneously under a common notion of tameness. 

Before we state our main results, we begin by fixing notation and terminology.

\section{Notation and conventions...}

\emph{Throughout this paper, $n$ ranges over $\N:=\{0,1,\ldots\}$, $m$ ranges over $\N^*:= \N \setminus \{0\}$, and $p$ ranges over the real interval $[1,\infty)$ and $Q_n:=[-1/2,1/2]^n$}. Further, let $[n] := \{1,\ldots,n\}$ where $[0] := \emptyset$. For a real number $r \geq 0$, denote its integer part by $\lfloor r \rfloor$.

\subsection{... concerning sets and maps}

Suppose $A, B$, and $C$ are sets with $C \subseteq A$.
Denote the graph of a map $f:A \to B$ by $\Gamma(f) \subseteq A \times B$ and the restriction of $f$ to $C$ by $f|_C$. The map with the empty graph is the empty map $A \to B$.
Further, let $\chi_A$ be the characteristic function of $A$ given by $\chi_A(x):=1$ if $x \in A$, and $\chi_A(x):=0$ if $x \notin A$. The power set of $A$ is denoted as $\mathcal{P}(A)$.

We consider $\R^n$ as the usual Euclidean vector space with standard basis $\{e_i\}_{i \in [n]}$  and use the convention that $\R^0 := \{*\}$ is a singleton set. Any map $f:\{*\}\to A$ is identified with its value $f(\{*\}) \in A$.
Further, the open and closed balls of radius $\varepsilon>0$ around $x \in \R^n$ are denoted by $B_\varepsilon(x)$ and $\overline{B}_\varepsilon(x)$. 
For $A \subseteq \R^n$, we distinguish between the boundary $\partial A := \cl(A) \setminus \interior(A)$ of $A$ and the frontier $\fr(A) := \cl(A) \setminus A$ of $A$. 

We let $\R_\infty := \R \cup \{\pm \infty\}$ be the ordered set extending $\R$ by $-\infty < \R < \infty$ and view $\R_\infty$ as a topological space equipped with the order topology.

\subsection{... concerning projections and fibers}

Let $i\in[n]$. Denote the projection of $\R^n$ onto the first $i$ coordinates by $\pi^{\leq i}_n: \R^n \to \R^i$, $(x_1,\ldots,x_n) \mapsto (x_1,\ldots,x_i)$ and $\pi_n^{\leq 0}:\R^n \to \R^0$. We also use $\pi_n^{< i} := \pi_n^{\leq i-1}$ and $\pi_{n+1} := \pi_{n+1}^{\leq n}$. 
 For the fiber of a set $A \subseteq \R^n$ above $x=(x_1,\ldots,x_i) \in \R^i$, we write $A(x) := \{y \in \R^{n-i}: (x,y) \in A\} \subseteq \R^{n-i}$. So the fiber of $A$ above $x$ is non-empty if and only if $x \in \pi_n^{\leq i}(A)$. 
Similarly, for a map $f:A \to \R$, we denote the map $A(x) \to \R$, $y \mapsto f(x,y)$ by $f(x)$, so that $f(x)(y)=f(x,y)$. This notation causes no confusion since $x \notin \dom(f)$ and is consistent with the usual meaning of $f(x)$ if $i=n$, using the convention about maps $\R^0 \to \R$. Also, set $A():=A$ and $f():=f$ for the empty tuple.

\subsection{... concerning smoothness and measurability}

For a $C^m$ function $f:U \to \R$, $U \subseteq \R^n$ open, $\alpha \in \N^n$, $\abs{\alpha} := \alpha_1 + \cdots + \alpha_n \leq m$, let $\partial^\alpha f: U \to \R$ be the continuous $\alpha$-th derivative of $f$. In case $\alpha = e_i$, we also use $\partial^i f$ instead of $\partial^{e_i} f$.
More generally, a function $g:A \to \R$ for arbitrary $A \subseteq \R^n$ is $C^m$ if there is some open $U \subseteq \R^n$ with $A \subseteq U$ and a $C^m$ map $f:U \to \R$ with $g=f|_A$. Any function $f:\R^0 \to \R$ is $C^m$ using $\partial^\alpha f = f$ for $\alpha=()$. The empty map is $C^m$.

As usual, the $L^p$ norm of an $L^p$ function $f:\Omega \to \R$, $\Omega \subseteq \R^n$ measurable, is 
$$\norm{f}_{L^p(\Omega)} := \left(\int_\Omega \abs{f}^p d\lambda^n\right)^\frac{1}{p}$$
where $\lambda^n$ is the Lebesgue measure on $\R^n$. The $L^\infty$ norm of $f$ is given by $\norm{f}_{L^\infty} := \esssup_{x\in \Omega} \abs{f(x)} \in \R_\infty$.
If $f$ is $C^m$ on $\Omega$ and $F:U\to\R$ is a $C^m$ extension of $f$ where $\Omega \subseteq U \subseteq \R^n$ open, then the Sobolev norm of $F$ is $\norm{F}_{W^{m,\infty}(U)} :=\max_{\abs{\alpha} \leq m}  \norm{\partial^\alpha F}_{L^\infty(U)}$. We set $\norm{F}_{W^{m,\infty}(\Omega)} :=\max_{\abs{\alpha} \leq m}  \norm{\partial^\alpha F}_{L^\infty(\Omega)}$ and $\norm{f}_{W^{m,\infty}(\Omega)} := \inf_F \norm{F}_{W^{m,\infty}(\Omega)}$ where the infimum runs over all $C^m$ extensions $F$ of $f$ to some open neighborhood of $\Omega$.
The empty map has norm zero.

\subsection{... concerning generalized intervals}

For functions $f,g: A \to \R$, $A \subseteq \R^n$, such that $f(x)<g(x)$ for all $x\in A$, abbreviated as $f<g$, define 
\begin{align*}
(f,g)_A := & \{(x,y)\in\R^{n+1}:x \in A, f(x) < y < g(x)\} \subseteq \R^{n+1}, \\
(-\infty,g)_A := & \{(x,y)\in\R^{n+1}:x \in A, y < g(x) \} \subseteq \R^{n+1}, \\
(f,\infty)_A := &\{(x,y)\in\R^{n+1}:x \in A, f(x) < y \} \subseteq \R^{n+1}, \\
(-\infty,\infty)_A:= &A \times \R \subseteq \R^{n+1}, \\
(f,f)_A := & \Gamma(f) \subseteq \R^{n+1}, \\
(g,f)_A := & (\infty, f)_A := (g,-\infty)_A := (\infty,-\infty)_A := \emptyset.
\end{align*}
Here, the symbols $\pm \infty \in \R_\infty$ are also interpreted as constant functions $\pm \infty:A\to\R_\infty$.

\subsection{... concerning neural networks}

For notions regarding feedforward neural networks we follow \cite{petersen2025mathematicaltheorydeeplearning}. 
A \emph{ReLU neural network} $\Phi$ is a tuple $((W^{(0)},b^{(0)}),\ldots,(W^{(L)},b^{(L)}))$ of matrix-vector pairs where $L\in\N^*$, $W^{(i)} \in \R^{d_{i+1} \times d_i}, b^{(i)} \in \R^{d_{i+1}}$ for $0\leq i\leq L$ and $d_0,\ldots,d_{L+1} \in \N^*$. We identify $\Phi$ with its associated \emph{realization}
$$\Phi:\R^n \to \R, x \mapsto \Phi(x),$$ where the output is recursively calculated by 
\begin{itemize}
    \item $x^{(0)}:=x$,
    \item $x^{(i)}:=\rho(W^{(i-1)}x^{(i-1)}+b^{(i-1)})$ for $i=1,\ldots,L$,
    \item $\Phi(x):=W^{(L)}x^{(L)}+b^{(L)}$.
\end{itemize}
Here, $$\rho: \R \to \R, x \mapsto \max \lbrace 0,x \rbrace$$
denotes the \emph{ReLU activation function}.
Note that we use the same letter for the realization of $\Phi$ and typically identify the matrix-vector tuple $((W^{(0)},b^{(0)}),\ldots,(W^{(L)},b^{(L)}))$ as an element $\theta \in \R^{\sum_{i=0}^L d_{i+1}(d_i+1)}$, called the \emph{weights} of $\Phi$.
Let $\size(\Phi):= |\theta^{\neq 0}|$ be the number of non-zero weights of $\Phi$, $\width(\Phi):=\max_{i\in[L]} d_i$ the maximal number of neurons occurring in some layer of $\Phi$ and $\depth(\Phi):=L$ the number of layers of $\Phi$. Further let $\norm{\Phi}_{\max}:=\norm{\theta}_{\max}$ denote the maximal absolute value of the weights of $\Phi$.

For not necessarily integer valued $L,W,B > 0$, the class of ReLU neural networks $\Phi: \R^n \to [0,1]$ where 
\begin{itemize}
    \item  $\norm{\Phi}_{\max} \leq B$,
    \item $\size(\Phi) \leq W$,
    \item $\depth(\Phi) \leq L$,
\end{itemize}
is denoted by $\mathcal{NN}(n,L,W,B)$. Note that the weights of $\Phi \in \mathcal{NN}(n,L,W,B)$ satisfy $\theta \in [-B,B]^j$ for some $j \leq {2\lceil W \rceil(\lceil L \rceil \lceil W\rceil+n)}$.

\section{Main results}

We first state the definition of the decision sets considered in the main approximation and estimation theorems. The purpose of the so called \emph{traceable sets} is isolating the properties of definable sets in o-minimal structures that enable fast approximation of their characteristic functions in $L^p$ by ReLU neural networks.

\begin{definition}
Call a set $A \subseteq \R^n$ \emph{$m$-traceable} if there exist sets $A_0,\ldots,A_n$ and bounded $C^m$ maps $f_i,g_i:A_{i-1} \to \R$ such that for each $i\in[n]$:
\begin{itemize}
    \item $A_i \subseteq \R^i$, $A_n=A$ and $A_0 = \{*\}$,
    \item $f_i<g_i$ or $f_i=g_i$,
    \item $\pi_i(A_i) = A_{i-1}$ and $A_i = (f_i,g_i)_{A_{i-1}}$.
\end{itemize}
The sequence $$(\{f_1,g_1\}, \ldots, \{f_n,g_n\})$$
is called the \emph{signature datum} of $A$ and $A_0,\ldots,A_n$ is the \emph{fundament} of $A$.

Let $A \subseteq Q_n$ be $m$-traceable.
If for each $i\in[n]$, there are $C^m$ functions $F_i, G_i:\R^{i-1} \to \R$ with $F_i|_{A_{i-1}} = f_i$ and $ G_i|_{A_{i-1}} =g_i$ so that $\norm{F_i}_{W^{m,\infty}(Q_{i-1})} \leq B$ and $\norm{G_i}_{W^{m,\infty}(Q_{i-1})} \leq B$, we say that $A$ has the \emph{$(m,B)$-extension property}.
For $B>0$, let $\mathcal{K}_{m,n}^B$ be the set of all $m$-traceable subsets of $Q_n$ with the $(m,B)$-extension property.
\end{definition}

Next, we consider finite disjoint unions of $m$-traceable sets.

\begin{definition}
Let $\ell \in \N$. Call a set $\Omega \subseteq \R^n$ \emph{$(m,\ell)$-traceable} if there are pairwise disjoint $m$-traceable sets $A^1,\ldots,A^l \subseteq \R^n$ for some $l\leq \ell$ such that $\Omega = A^1\cup\cdots\cup A^l$. 

Let $\Omega \subseteq Q_n$ be $(m,\ell)$-traceable.
Then $\Omega$ is said to have the \emph{$(m,B,\ell)$-extension property} if there are pairwise disjoint $A^1,\ldots,A^l \in \mathcal{K}_{m,n}^B$ for some $l\leq \ell$ such that $\Omega = A^1\cup\cdots\cup A^l$.
Define $\mathcal{K}_{m,n}^{B,\ell}$ to be the set of all $(m,\ell)$-traceable subsets of $Q_n$ which have the $(m,B,\ell)$-extension property.
\end{definition}

The first result concerns approximation of sets in $\mathcal{K}_{m,n}^{B,\ell}$ by ReLU neural networks.

\begin{theorem}\label{thm:mainapprox}
For $n\geq 2$, $B>0$ and $\ell \in \N$, there exist constants $c=c(m,n,B,\ell,p)>0$ and $s=s(m,n,B,\ell,p)>0$ so that for every $\Omega \in \mathcal{K}_{m,n}^{B,\ell}$ and $0<\varepsilon<1/2$, there is a ReLU neural network $\Phi_\varepsilon^\Omega: \R^n \to [0,1]$ with 
\begin{itemize}
    \item  $\norm{\Phi_\varepsilon^\Omega}_{\max} \leq \varepsilon^{-s}$,
    \item $\depth(\Phi_\varepsilon^\Omega) \leq \lceil \log_2((\ell+1) n)\rceil+(3+\lceil \log_2(m)\rceil)(14+2m/n)$,
    \item $\size(\Phi_\varepsilon^\Omega) \leq c\cdot\varepsilon^{-p(n-1)/m}$,
\end{itemize}
such that
$$\norm{\Phi_\varepsilon^\Omega-\chi_\Omega}_{L^p(Q_n)} < \varepsilon.$$
\end{theorem}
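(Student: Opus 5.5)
We prove the theorem in three stages: first a single $m$-traceable set $A\in\mathcal{K}_{m,n}^B$, then finite disjoint unions, then the depth and weight bookkeeping. Let $A_0,\ldots,A_n$ be the fundament of $A$ and $F_i,G_i$ the extensions. Unwinding the definition,
$$A=\{x\in\R^n:\ x_i\in(f_i,g_i)\text{ or }x_i=f_i(\pi_n^{<i}(x)) \text{ for all } i\in[n]\},$$
where $f_i$ and $g_i$ are evaluated at $\pi_n^{<i}(x)$, i.e. at $(x_1,\ldots,x_{i-1})$.

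\textbf{Reduction to thick coordinates.} Whenever $f_i=g_i$, the set $A$ lies in the graph of a function over $\pi_n^{\le i-1}$ times a box. Hence $\lambda^n(A)=0$, and $\Phi\equiv 0$ already approximates $\chi_A$ with zero $L^p$ error. We may therefore assume $f_i<g_i$ for all $i$. Then $\chi_A=\prod_{i=1}^n \chi_{\{f_i(\pi^{<i}x)<x_i<g_i(\pi^{<i}x)\}}\cdot\chi_{\pi_n^{<i}x\in A_{i-1}}$. Inductively, this equals, up to a null set, the function
$$x\mapsto \prod_{i=1}^n \chi_{\{F_i(\pi^{<i}x)<x_i<G_i(\pi^{<i}x)\}}\ \text{ restricted to } Q_n.$$
The indicator of $A_{i-1}$ in the earlier coordinates is already enforced by the earlier factors, so on $A$'s fundament we can replace $f_i,g_i$ by $F_i,G_i$ on all of $Q_{i-1}$.

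\textbf{Approximation of a single set.} Each factor is the indicator of a horizon set $\{x_i>F_i(\pi^{<i}x)\}$, intersected with one of the form $\{x_i<G_i(\cdot)\}$. We invoke the known construction of \cite{petersen2018optimal}, or build it directly:
\begin{enumerate}[label=(\roman*)]
\item Approximate $F_i,G_i\in W^{m,\infty}(Q_{i-1})$ with norm at most $B$ by ReLU networks $\tilde F_i,\tilde G_i$ to sup-accuracy $\delta$. Use the Yarotsky-type local Taylor construction with size $\mathcal O(\delta^{-(i-1)/m})\le\mathcal O(\delta^{-(n-1)/m})$, constant depth $\mathcal O((1+\log_2 m)(1+m/n))$, and weights polynomial in $1/\delta$.
\item Form the clipped ramps $H_\delta(t)=\rho(t/\delta)-\rho(t/\delta-1)$, evaluated at $x_i-\tilde F_i-\delta$ and at $\tilde G_i-x_i-\delta$.
\item Combine the $2n$ values in $[0,1]$ by the ReLU ``soft AND'' $\rho(\sum_j y_j-(2n-1))$, or by a binary tree of $\min$'s.
\end{enumerate}
The output differs from $\chi_A$ only on $\bigcup_i\{|x_i-F_i|\le 3\delta\}\cup\{|x_i-G_i|\le3\delta\}$. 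By Fubini each such set has measure at most $6\delta$, and the output lies in $[0,1]$, so the $L^p$ error is at most $(12n\delta)^{1/p}$. Choosing $\delta\asymp\varepsilon^p$ gives error below $\varepsilon/\ell$ (adjusting constants) and size $\mathcal O(\varepsilon^{-p(n-1)/m})$.

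\textbf{Unions and bookkeeping.} For $\Omega=A^1\sqcup\cdots\sqcup A^l$ with $l\le\ell$ we have $\chi_\Omega=\sum_j\chi_{A^j}$. We run the $l$ networks in parallel and take $\min(1,\sum_j\Phi^{A^j})$. This requires one further ReLU layer, which keeps the output in $[0,1]$ and can only reduce the error since $\chi_\Omega\le 1$. The triangle inequality gives total error below $\varepsilon$. The logarithmic term $\lceil\log_2((\ell+1)n)\rceil$ in the depth accounts for the min/max trees over at most $(\ell+1)n$ inputs; the remaining depth comes from the smooth approximants. Weights are bounded by $\varepsilon^{-s}$, since $1/\delta$ and the Taylor-construction weights are polynomial in $1/\varepsilon$.

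The main obstacle is step (i) with the exact depth bound stated: we need a smooth-function approximation theorem with constant depth $(3+\lceil\log_2 m\rceil)(14+2m/n)$ and polynomially bounded weights. We would take this from an explicit quantitative version of the result of \cite{petersen2018optimal}, or rederive it via localized Taylor polynomials with multiplication implemented through a partition of unity. A second subtle point is rigorously justifying the replacement of $f_i$ by $F_i$ outside $A_{i-1}$ up to null sets, which is the inductive step above.
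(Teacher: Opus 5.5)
Your overall argument is sound and, apart from the depth bound, complete. It is organized differently from the paper's.

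\textbf{How your route compares.} The paper works with the closed horizon functions $\mathrm{F}_i,\mathrm{G}_i$. It identifies $\min_i\{\mathrm{F}_i,\mathrm{G}_i\}$ with $\chi_{\cl(A)}$ through the envelope description of the closure (Lemma~\ref{lem:closuredescription}), and then needs $\lambda^n(\fr(A))=0$ (Corollary~\ref{cor:frontier}). Each horizon function is approximated by the horizon theorem of \cite{petersen2018optimal}, used as a black box. The pieces are combined by a $\min$-tree over $2n$ inputs and a $\max$-tree over $\ell$ inputs, and the errors simply add because $\min$ and $\max$ are $1$-Lipschitz (Lemma~\ref{lem:minmax}).

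You instead discard the thin case (a null set, handled by the zero network). In the thick case you use that $A=Q_n\cap\bigcap_i\{F_i(x^{<i})<x_i<G_i(x^{<i})\}$. This holds exactly, not only up to a null set: if the first $i-1$ constraints hold, then inductively $x^{<i}\in A_{i-1}$, where $F_i=f_i$ and $G_i=g_i$. So the ``second subtle point'' you flag is a two-line induction, and no closure or frontier analysis is needed at all.

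You then open the black box: sup-norm approximation of $F_i,G_i$ plus a ramp, which is how \cite{petersen2018optimal} prove their horizon lemma. Your network then coincides with $\chi_A$ off $2n$ bands of measure at most $6\delta$ each. This lets you use a one-layer soft AND and, by disjointness, a one-layer clipped sum $\min(1,\sum_j\Phi^{A^j})$ instead of logarithmic-depth trees. The paper's route is more modular; yours is more elementary and gives a shallower network.

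The band argument needs the $L^\infty$ form of Theorem~3.1 in \cite{petersen2018optimal}. The paper's Theorem~\ref{thm:optimalapprox_3.1} records only an $L^p$ bound, which would not suffice for it.

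\textbf{The gap: the explicit depth bound.} Step (i) as you state it does not deliver the required depth. You pair size $\mathcal{O}(\delta^{-(i-1)/m})$ with depth of order $(1+\log_2 m)(1+m/n)$, but the constant-depth result you would cite ties depth to $m/d$, where $d$ is the dimension in which you approximate. In the native dimension $d=i-1$, \cite{petersen2018optimal} give depth $(2+\lceil\log_2 m\rceil)(11+m/(i-1))$. For $i=2$ this is $(2+\lceil\log_2 m\rceil)(11+m)$. For instance, with $n=3$, $m=64$, $\ell=1$ this equals $600$, while the theorem allows only $3+510=513$.

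\textbf{The fix.} The missing idea is the one the paper uses to pass from $\mathcal{F}^B_{m,i-1}$ to $\mathcal{F}^B_{m,n-1}$: regard $F_i,G_i$ as functions on $Q_{n-1}$ that are constant in the extra coordinates.
\begin{itemize}
\item Sup-accuracy $\delta$ then costs size $\mathcal{O}(\delta^{-(n-1)/m})$, which is exactly your budget, at depth $(2+\lceil\log_2 m\rceil)(11+m/(n-1))$.
\item Carry $x_i$ along by identity neurons and merge adjacent affine maps. Then the ramp, the soft AND and the clipping each add one ReLU layer.
\item Using $m/(n-1)\le 2m/n$, your total depth is at most $(2+\lceil\log_2 m\rceil)(11+2m/n)+3$, which is strictly below $(3+\lceil\log_2 m\rceil)(14+2m/n)$.
\end{itemize}
So you never need a smooth approximant of depth $(3+\lceil\log_2 m\rceil)(14+2m/n)$. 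Also, the term $\lceil\log_2((\ell+1)n)\rceil$ is not used by your network at all; it pays for the paper's $\min$/$\max$ trees, not for your construction.
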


The same rate can be achieved for a certain class of piecewise smooth functions on smoothness domains given by traceable sets, see Subsection \ref{sec:piecewisesmooth}.
By Lemma \ref{lem:traceablefibers},  natural subclasses of $\mathcal{K}_{m,n}^{B,\ell}$ come from fibers of sets contained in $\mathcal{K}_{m,k+n}^{B,\ell}$ for $k \geq 1$. Applying Theorem \ref{thm:mainapprox} in such a setting yields the following result.

\begin{corollary}\label{cor:uniformmain}
For $n \geq 2,k\geq 1, B>0, \ell \in \N$, there exist constants $c=c(m,n,B,\ell,p)>0$ and $s=s(m,n,B,\ell,p)>0$ so that for every $\Omega \in \mathcal{K}_{m,k+n}^{B,\ell}$ and $0<\varepsilon<1/2$ and every $a \in \R^k$, there is a ReLU neural network $\Phi_\varepsilon^a:\R^n \to [0,1]$ with 
\begin{itemize}
    \item $\norm{\Phi_\varepsilon^a}_{\max} \leq \varepsilon^{-s}$,
    \item $\depth(\Phi_\varepsilon^a) \leq \lceil \log_2((\ell+1) n)\rceil+(3+\lceil \log_2(m)\rceil)(14+2m/n)$,
    \item $\size(\Phi_\varepsilon^a) \leq c\cdot\varepsilon^{-p(n-1)/m}$,
\end{itemize}
such that
$$\norm{\Phi_\varepsilon^a-\chi_{\Omega(a)}}_{L^p(Q_n)} < \varepsilon.$$
\end{corollary}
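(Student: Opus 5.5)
The corollary should follow from Theorem \ref{thm:mainapprox} once we show that fibers of sets in $\mathcal{K}_{m,k+n}^{B,\ell}$ lie in $\mathcal{K}_{m,n}^{B,\ell}$. This is the content of Lemma \ref{lem:traceablefibers}, which we take as given. The constants $c$ and $s$ of Theorem \ref{thm:mainapprox} depend only on $(m,n,B,\ell,p)$, and the depth bound depends only on $(\ell,n,m)$. So it suffices to place each fiber $\Omega(a)$ in $\mathcal{K}_{m,n}^{B,\ell}$ uniformly in $a$.

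\textbf{Step 1: trivial fibers.} Fix $\Omega \in \mathcal{K}_{m,k+n}^{B,\ell}$ and $a\in\R^k$. If $a\notin Q_k$, or more generally if $\Omega(a)=\emptyset$, then $\chi_{\Omega(a)}=0$. We take the zero network of depth $1$: all weights zero and realization constant $0$. A zero network has no nonzero weights, so its size can be taken as small as needed. Alternatively, one notes that $\emptyset\in\mathcal{K}_{m,n}^{B,\ell}$ as the empty union ($l=0$) and applies the theorem directly.

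\textbf{Step 2: nonempty fibers.} Write $\Omega=A^1\cup\dots\cup A^l$ with pairwise disjoint $A^j\in\mathcal{K}_{m,k+n}^B$ and $l\le \ell$. Then $\Omega(a)=A^1(a)\cup\dots\cup A^l(a)$, and these fibers are still pairwise disjoint.

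For each $j$, let $A_0^j,\dots,A_{k+n}^j$ be the fundament of $A^j$, with extensions $F_i,G_i$. We set
\[
A'_i:=A^j_{k+i}(a) \quad\text{and}\quad f'_i:=f_{k+i}(a), \qquad i\in[n].
\]
The extensions are $F'_i(y):=F_{k+i}(a,y)$ and $G'_i(y):=G_{k+i}(a,y)$. Three properties need checking.

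The relations $A'_i=(f'_i,g'_i)_{A'_{i-1}}$ and $\pi_i(A'_i)=A'_{i-1}$ hold because fibers commute with generalized intervals and projections. They also use the fact that $A^j_{k}(a)=\{*\}$ whenever $a\in A^j_k$, which holds since the fiber is nonempty.

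The maps $f'_i$ and $g'_i$ are again bounded and $C^m$, and the relation $f'_i<g'_i$ or $f'_i=g'_i$ is inherited.

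For the norm bounds, note that $a\in\pi(Q_{k+n})=Q_k$, so $\{a\}\times Q_{i-1}\subseteq Q_{k+i-1}$. Partial derivatives in the $y$-variables of $F_{k+i}(a,\cdot)$ are restrictions of partial derivatives of $F_{k+i}$. Hence $\|F'_i\|_{W^{m,\infty}(Q_{i-1})}\le B$, and likewise for $G'_i$. So $A^j(a)\in\mathcal{K}_{m,n}^B$ and $\Omega(a)\in\mathcal{K}_{m,n}^{B,\ell}$.

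\textbf{Step 3: conclusion.} Apply Theorem \ref{thm:mainapprox} to $\Omega(a)$ with the same $c$ and $s$; this gives $\Phi^a_\varepsilon$ with all the required bounds.

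The only delicate point is Step 2. There are two issues: the mismatch between $\R^0=\{*\}$ and the base level $A^j_k(a)$, and the need for the $W^{m,\infty}$ norms on $Q_{i-1}$ to be controlled pointwise. The norm control is exactly what the $L^\infty$ sup over the larger cube provides. Both issues are handled by Lemma \ref{lem:traceablefibers}.
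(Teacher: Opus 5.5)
Your proposal is correct and follows the paper's own argument: Lemma \ref{lem:traceablefibers} places each fiber $\Omega(a)$ in $\mathcal{K}_{m,n}^{B,\ell}$, and Theorem \ref{thm:mainapprox} is then applied with constants depending only on $(m,n,B,\ell,p)$. Your Step 1 (empty fibers, e.g.\ $a\notin Q_k$, via $l=0$) is a useful addition, since the lemma is stated only for $a\in Q_k$. In Step 2, state explicitly that you discard the indices $j$ with $A^j(a)=\emptyset$, because nonemptiness of $\Omega(a)$ alone does not give $a\in A^j_k$ for every $j$.
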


With the approximation capabilities established, we get learning rates for the class $\mathcal{K}_{m,n}^{B,\ell}$ via empirical risk minimization over the $L^p$ approximating ReLU neural networks.

\begin{theorem}\label{thm:statlearn}
Let $B>0,\ell \in \N$, $n \geq 2$, and $\Omega \in \mathcal{K}_{m,n}^{B,\ell}$. For $N \in \N^*$, let $X_1,\ldots,X_N$ be independently sampled from the uniform distribution $\lambda^n$ on $Q_n$ and $S:= (X_i,\chi_\Omega(X_i))_{i=1}^N$. 
Then an empirical risk minimizer $\Phi_S$ with respect to the hinge loss over 
$$\mathcal{NN}(n,L,cN^{p(n-1)/(m+pn-p)},N^{sm/(m+pn-p)})$$
with $L:=\lceil \log_2((\ell+1)n)\rceil+(3+\lceil \log_2(m)\rceil)(14+2m/n)$ satisfies for every $0<\kappa<p(n-1)/(m+pn-p)$, that
$$\mathbb{E}_S \mathbb{E}_{X\sim\lambda^n}\abs{\chi_\Omega(X) - \chi_{[1/2,\infty)} \circ \Phi_S (X)}^2 \lesssim N^{\kappa-m/(m+pn-p)}$$
where the implicit constant is independent of $N$. Hence the minimax error can be bounded by
$$
\sup_{\Omega \in \mathcal{K}_{m,n}^{B,\ell}} \mathbb{E}_{(X_1,\ldots,X_N)\sim 
(\lambda^n)^N}\norm{\chi_\Omega -\chi_{[1/2,\infty)}\circ\Phi_S}^2_{L^2} \lesssim N^{\kappa-m/(m+pn-p)}.
$$
\end{theorem}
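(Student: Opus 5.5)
The plan is the standard approximation--estimation tradeoff in the spirit of \cite{petersen2021optimal}, applied to the hinge loss. For $\Phi:\R^n\to[0,1]$ with target label $y=\chi_\Omega(x)\in\{0,1\}$, I rescale labels to $\pm1$ via $\tilde y=2y-1$ and $\tilde\Phi=2\Phi-1$, and use the hinge loss $\max\{0,1-\tilde y\tilde\Phi(x)\}$. On $[0,1]$-valued outputs this loss equals $2\abs{\chi_\Omega(x)-\Phi(x)}$. Hence the hinge risk of $\Phi$, minus the Bayes risk (which is zero since labels are deterministic), equals $2\norm{\chi_\Omega-\Phi}_{L^1}$. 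Moreover, whenever $\chi_{[1/2,\infty)}\circ\Phi(x)\neq\chi_\Omega(x)$ we have $\abs{\chi_\Omega(x)-\Phi(x)}\geq 1/2$. Therefore the squared misclassification error is bounded by $2\norm{\chi_\Omega-\Phi_S}_{L^1}$, i.e. by the excess hinge risk. It thus suffices to bound the expected excess hinge risk of $\Phi_S$.

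Next I split the excess risk in the usual way: approximation error plus estimation error. For the approximation part, I apply Theorem~\ref{thm:mainapprox} with $\varepsilon:=N^{-m/(p(m+pn-p))}$. This yields $\Phi_\varepsilon^\Omega$ in the prescribed class, since $\varepsilon^{-p(n-1)/m}=N^{(n-1)/(m+pn-p)}$ and, because $p\geq1$, this is at most $N^{p(n-1)/(m+pn-p)}$. Moreover $\varepsilon^{-s}\le N^{sm/(m+pn-p)}$ and the depth matches $L$. By H\"older on $Q_n$ (unit volume), $\norm{\chi_\Omega-\Phi_\varepsilon^\Omega}_{L^1}\le\norm{\cdot}_{L^p}<\varepsilon$.

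Here I need to be careful: naively, $\varepsilon$ only gives the rate $N^{-m/(p(m+pn-p))}$. Instead I should exploit that $\Phi_\varepsilon^\Omega$ is $[0,1]$-valued and $\chi_\Omega$ is $\{0,1\}$-valued, so the $L^1$ error and the $L^p$ error raised to the $p$-th power are comparable. Concretely, I choose $\varepsilon$ with $\varepsilon^p=N^{-m/(m+pn-p)}$. The network size is then $\varepsilon^{-p(n-1)/m}=N^{(n-1)/(m+pn-p)}$, well within $W$, and $\norm{\cdot}_{L^1}\le\norm{\cdot}_{L^p}$ gives only $\varepsilon$. So I will rather use the class size $W=cN^{p(n-1)/(m+pn-p)}$ as given, corresponding to $\varepsilon=N^{-m/(m+pn-p)}$ in $L^p$. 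I then use $\norm{\cdot}_{L^1}\le\norm{\cdot}_{L^p}\le\varepsilon=N^{-m/(m+pn-p)}$, which is exactly the target rate. The weight bound is $\varepsilon^{-s}=N^{sm/(m+pn-p)}$, matching the class.

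For the estimation part, I bound covering numbers of $\mathcal{NN}(n,L,W,B_N)$ in sup norm on $Q_n$. Networks of fixed depth $L$ with at most $W$ nonzero weights bounded by $B_N$ are Lipschitz in their parameters with constant roughly $(W B_N)^{L}$. This gives $\log\mathcal N(\delta)\lesssim W L\log(WB_N/\delta)$, counting sparsity patterns via $\binom{j}{W}$ with $j\le 2\lceil W\rceil(\lceil L\rceil\lceil W\rceil+n)$. The hinge loss is 1-Lipschitz and bounded. A standard oracle inequality for ERM over a finite $\delta$-net then gives estimation error $\lesssim (\log\mathcal N(\delta)+1)/N+\delta$ plus a variance term. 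For the fast rate I plan a Bernstein condition: since the loss minus the target loss equals $2\abs{\chi_\Omega-\Phi}$, its variance is at most a constant times its mean. With $\delta=1/N$, this yields estimation error $\lesssim W\log N/N= N^{p(n-1)/(m+pn-p)-1}\log N$.

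The main obstacle is matching exponents. With $W=N^{p(n-1)/(m+pn-p)}$ the estimation term is $N^{(p(n-1)-m-pn+p)/(m+pn-p)}=N^{-m/(m+pn-p)}$ up to logs, and the logarithm is absorbed into $N^\kappa$. The Bernstein-type fast-rate oracle inequality is the delicate step. I would take it from \cite{petersen2021optimal} (its hinge-loss ERM theorem for classification with networks), verifying the variance condition via the identity above. The minimax statement then follows because all constants depend only on $m,n,B,\ell,p,\kappa$ and not on $\Omega$.
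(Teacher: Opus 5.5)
Your proof is correct and follows essentially the same route as the paper: the identity $\mathcal{E}_\Omega(\Phi)=2\norm{\chi_\Omega-\Phi}_{L^1(Q_n)}\le 2\norm{\chi_\Omega-\Phi}_{L^p(Q_n)}$ (Lemma~\ref{lem:estim1}), approximation at $\varepsilon=N^{-m/(m+pn-p)}$ to land in the stated class, a covering-entropy bound of order $W_N\log N$ (the paper takes this from \cite{grohs2024proof}, Lemma 6.1), and the hinge-loss ERM theorem of \cite{petersen2021optimal}. Two small remarks. First, if you import that theorem in the form stated as Theorem~\ref{thm:5.6optimal}, the entropy condition $V_{\mathcal{F}_N,\norm{\cdot}_{L^\infty}}(\delta_N)\le CN\delta_N$ forces you to take $\delta_N=N^{\kappa-m/(m+pn-p)}$ rather than $1/N$; this is exactly where $N^\kappa$ absorbs the logarithm, and you should also check the trivial condition $N^{1-\iota}\max\{a_N,\delta_N\}\gtrsim 1$. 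Second, the abandoned claim in your third paragraph that $\norm{\cdot}_{L^1}$ is comparable to $\norm{\cdot}_{L^p}^p$ is false in the direction you would need: for $[0,1]$-valued $h$ only $\norm{h}_{L^p}^p\le\norm{h}_{L^1}\le\norm{h}_{L^p}$ holds, so that detour should simply be deleted.
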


We now explain how the results above can be applied to definable collections in o-minimal structures.

\begin{definition}\label{def:omin}
An \emph{o-minimal structure} on $\R$ is a sequence $ \mathcal{S} := (\mathcal{S}_n)_{n \in \N}$ such that for all $n$:
\begin{enumerate}[label=(\roman*)]
    \item $\R^n \in\mathcal{S}_n \subseteq \mathcal{P}(\R^n)$,
    \item $A,B \in \mathcal{S}_n \implies A \cap B, A \cup B, \R^n\setminus A \in \mathcal{S}_n$,
    \item $\mathcal{S}_1$ consists of exactly the finite unions of points and open intervals in $\R$,
    \item $\{(x_1,x_2)\in\R^2: x_1<x_2\} \in \mathcal{S}_2$,
    \item $\{(x_1,x_2,x_3)\in\R^3: x_1+x_2=x_3\}, \{(x_1,x_2,x_3)\in\R^3: x_1\cdot x_2 = x_3\} \in \mathcal{S}_3$,
    \item $A \in \mathcal{S}_n \implies \R \times A, A \times \R \in \mathcal{S}_{n+1}$,
    \item $\{ (x_1,\ldots,x_n) \in \R^n: x_1 =x_n \} \in \mathcal{S}_n$,
    \item $A \in \mathcal{S}_{n+1} \implies \pi_{n+1}(A) \in \mathcal{S}_n$.
\end{enumerate}
A set $A \subseteq \R^n$ is \emph{definable} (in $\mathcal{S}$) if $A \in \mathcal{S}_n$. A map $f:A \to \R$ is \emph{definable} (in $\mathcal{S}$) if its graph $\Gamma(f) \subseteq \R^{n+1}$ is definable. Call $\mathcal{H} \subseteq \mathcal{P}(\R^n)$ a \emph{definable collection} if there is some definable set $A \subseteq \R^k \times \R^n$, $k\geq 1$, such that $\mathcal{H} = \{A(a):a\in \R^k \}$. Let $\mathcal{H} \subseteq \mathcal{P}(\R^n)$ be a definable collection. If $\mathcal{H} = \{A(a):a\in \R^k \}$ for some $k \geq 1$ and $A \in \bigcup_{B>0,\ell \geq0}\mathcal{K}_{m,k+n}^{B,\ell}$, we call $H$ \emph{$m$-admissible}.
\end{definition}

Corollary \ref{cor:uniformmain} allows the approximation of $m$-admissible definable collections in o-minimal structures by ReLU neural networks.

\begin{corollary}\label{cor:ominapplication}
Let $\mathcal{S}$ be an o-minimal structure on $\R$ and let $\mathcal{H} \subseteq \mathcal{P}(\R^n)$ be a definable collection for $n\geq 2$.
If $\mathcal{H}$ is $m$-admissible, then there exist constants $c=c(\mathcal{H},m,n,p)>0$, $s=s(\mathcal{H},m,n,p)>0$ and $t=t(\mathcal{H},m,n,p) > 0$ such that for every $0<\varepsilon<1/2$ and every $H \in \mathcal{H}$, there is a ReLU neural network $\Phi_\varepsilon^H: \R^n \to [0,1]$ with 
\begin{itemize}
    \item $\norm{\Phi_\varepsilon^H}_{\max} \leq \varepsilon^{-s}$,
    \item $\depth(\Phi_\varepsilon^H) \leq t$,
    \item $\size(\Phi_\varepsilon^H) \leq c\cdot\varepsilon^{-p(n-1)/m}$,
    \item $\norm{\Phi_\varepsilon^H-\chi_H}_{L^p(Q_n)} < \varepsilon$.
\end{itemize}
\end{corollary}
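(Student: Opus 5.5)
This follows almost immediately from Corollary \ref{cor:uniformmain}. I will unfold the definition of $m$-admissibility and match the constants.

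Since $\mathcal{H}$ is $m$-admissible, Definition \ref{def:omin} gives some $k\geq 1$, some $B>0$, some $\ell\in\N$ and a set $A\in\mathcal{K}_{m,k+n}^{B,\ell}$ with $\mathcal{H}=\{A(a):a\in\R^k\}$. I fix such a choice of $(k,B,\ell,A)$ once and for all. Each of these depends only on $\mathcal{H}$ and $m$, and the dependence on $m$ is already built into the notion of $m$-admissibility. Definability of $A$ in $\mathcal{S}$ plays no further role, because all the analytic content is contained in membership in $\mathcal{K}_{m,k+n}^{B,\ell}$.

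Next, I apply Corollary \ref{cor:uniformmain} with parameters $n\geq 2$, $k$, $B$, $\ell$, $m$, $p$ and with $\Omega:=A$. This yields constants $c_0=c(m,n,B,\ell,p)$ and $s_0=s(m,n,B,\ell,p)$ with the following property. For every $0<\varepsilon<1/2$ and every $a\in\R^k$ there is a network $\Phi^a_\varepsilon:\R^n\to[0,1]$ such that
\begin{itemize}
    \item $\norm{\Phi^a_\varepsilon}_{\max}\leq\varepsilon^{-s_0}$,
    \item the depth is at most $\lceil\log_2((\ell+1)n)\rceil+(3+\lceil\log_2 m\rceil)(14+2m/n)$,
    \item the size is at most $c_0\varepsilon^{-p(n-1)/m}$,
    \item $\norm{\Phi^a_\varepsilon-\chi_{A(a)}}_{L^p(Q_n)}<\varepsilon$.
\end{itemize}

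Given $H\in\mathcal{H}$, I choose some $a\in\R^k$ with $H=A(a)$ and set $\Phi^H_\varepsilon:=\Phi^a_\varepsilon$. Since $\chi_H=\chi_{A(a)}$, the error bound transfers directly. I then take $c:=c_0$, $s:=s_0$, and let $t$ be the depth bound above, which is positive. All three constants depend only on $(m,n,B,\ell,p)$, and $(B,\ell)$ was fixed from $\mathcal{H}$, so they are constants of the form $c(\mathcal{H},m,n,p)$.

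The only point needing care is this independence. The constants in Corollary \ref{cor:uniformmain} must not depend on $a$ or on $k$, and the representation of $\mathcal{H}$ must be fixed before $\varepsilon$ and $H$ are chosen. Both hold by the stated uniformity of Corollary \ref{cor:uniformmain}, so there is no real obstacle.
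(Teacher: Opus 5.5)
Your proposal is correct and follows the same route as the paper: unfold $m$-admissibility to obtain a fixed $A\in\mathcal{K}_{m,k+n}^{B,\ell}$ representing $\mathcal{H}$, then apply Corollary \ref{cor:uniformmain} (which the paper derives from Theorem \ref{thm:mainapprox} via Lemma \ref{lem:traceablefibers}), taking $t$ to be the resulting $\varepsilon$-independent depth bound. One small remark: the constants do not actually need to be independent of $k$, because $k$ is fixed together with $\mathcal{H}$.
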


The paper is organized as follows.
After the preliminaries in Section \ref{sec:prelim} on extensions of maps, Section \ref{sec:omin} discusses fundamental results about o-minimal structures including the smooth cell decomposition theorem and definable collections. The properties of definable sets are then generalized in Section \ref{sec:prep}, studying traceable sets and their boundaries. Afterwards, in Subsection \ref{sec:approx}, the approximating ReLU neural network class is constructed and the proof of Theorem \ref{thm:mainapprox} is given. To complete the chapter, certain piecewise smooth functions on traceable domains are approximated by ReLU neural networks.
Lastly, Section \ref{sec:estim} is devoted to the proof of Theorem \ref{thm:statlearn}.

\section{Preliminaries}\label{sec:prelim}

First, we give a useful lemma concerning smoothness of fibers of functions.

\begin{lemma}\label{lem:fibermaps}
Let $i<n$, $A \subseteq \R^n$, and $f:A \to \R$ be a $C^m$ map.
Then for each $a \in \R^i$, the map $f(a):A(a) \to \R$ is $C^m$. 
Further, $\norm{f(a)}_{W^{m,\infty}(A(a))} \leq \norm{f}_{W^{m,\infty}(A)}$.
\end{lemma}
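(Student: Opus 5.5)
The plan is to work directly from the definitions in the notation section, by restricting extensions to affine slices. Fix $a \in \R^i$ and let $F: U \to \R$ be any $C^m$ extension of $f$, where $U \subseteq \R^n$ is open with $A \subseteq U$. Consider the embedding
\[
\iota_a: \R^{n-i} \to \R^n, \quad y \mapsto (a,y),
\]
which is affine and continuous. Set $U(a) := \iota_a^{-1}(U)$. This is an open subset of $\R^{n-i}$, and it contains $A(a)$, since $y \in A(a)$ means $(a,y) \in A \subseteq U$. Define $F(a) := F \circ \iota_a : U(a) \to \R$. By the chain rule (composition with an affine map), $F(a)$ is $C^m$. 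For $y \in A(a)$ we have $F(a)(y) = F(a,y) = f(a,y) = f(a)(y)$, so $F(a)$ is a $C^m$ extension of $f(a)$, and $f(a)$ is $C^m$. If $A(a) = \emptyset$, then $f(a)$ is the empty map, which is $C^m$ with norm zero by convention, so there is nothing to show in that case.

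For the norm bound, the key observation is that for every multi-index $\beta \in \N^{n-i}$ with $\abs{\beta} \leq m$,
\[
\partial^\beta F(a) = (\partial^{(0,\beta)} F) \circ \iota_a .
\]
This holds because $\iota_a$ has derivative equal to the inclusion of the last $n-i$ coordinates. It follows that
\[
\norm{\partial^\beta F(a)}_{L^\infty(A(a))} \leq \norm{\partial^{(0,\beta)} F}_{L^\infty(A)}.
\]
Taking the maximum over $\beta$ gives $\norm{F(a)}_{W^{m,\infty}(A(a))} \leq \norm{F}_{W^{m,\infty}(A)}$. Since $\norm{f(a)}_{W^{m,\infty}(A(a))}$ is the infimum over all extensions, it is at most the left-hand side. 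Finally, taking the infimum over all extensions $F$ of $f$ on the right yields the claimed inequality.

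The main obstacle is the interpretation of the $L^\infty$ norms over possibly non-open sets $A$ and $A(a)$, which may be null sets or lower dimensional. With respect to Lebesgue measure, an essential supremum over a fiber is not controlled by one over $A$. The plan is to read $\norm{\cdot}_{L^\infty(\Omega)}$ for continuous functions as the supremum over $\Omega$. With that reading, the pointwise inequality $\abs{\partial^\beta F(a)(y)} = \abs{\partial^{(0,\beta)}F(a,y)} \leq \sup_{A}\abs{\partial^{(0,\beta)}F}$ for $y \in A(a)$ settles the bound immediately. I would state this convention explicitly at the start of the proof, since the paper only ever applies these norms to continuous derivatives.
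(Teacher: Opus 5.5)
Your proposal is correct and takes essentially the same route as the paper: restrict an extension $F$ to the open slice $U(a)$, identify $\partial^{\beta}(F(a))$ with $(\partial^{(0,\beta)}F)(a,\cdot)$ to compare the norms, and take the infimum over extensions. Your caveat that $\norm{\cdot}_{L^\infty}$ must be read as a pointwise supremum for continuous functions is also correct. With the Lebesgue essential supremum the inequality can fail, for example when $A$ is $\lambda^n$-null but $A(a)$ has positive $\lambda^{n-i}$-measure; the paper's proof uses the supremum reading implicitly, and in its later applications the domains are cubes, where the two readings agree.
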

\begin{proof}
As $f$ is $C^m$, there is some open $U \supseteq A$ and a $C^m$ map $F: U \to \R$ such that $F|_A = f$.
Let $a \in \R^i$. Then $F(a)|_{A(a)} = f(a)$ and we claim that $U(a) \supseteq A(a)$ is open in $\R^{n-i}$ and that $F(a): U(a) \to \R$ witnesses that $f(a)$ is $C^m$.
To this end, we distinguish two cases. 
In case $a \in \R^i \setminus \pi_n^{\leq i}(U)$, $U(a) = \emptyset$ is open and $F(a)=f(a):\emptyset \to \R$ is the empty map.
If $a \in  \pi_n^{\leq i}(U)$, then for each $b=(b_{i+1},\ldots,b_n) \in \R^{n-i}$ such that $(a,b) \in U$, there is some $\delta=\delta(b) > 0$ such that
$$(a,b) \in \bigtimes_{j=1}^i(a_j-\delta,a_j+\delta) \times \bigtimes_{j=i+1}^n (b_j-\delta,b_j+\delta) \subseteq U.$$
Hence $b \in \bigtimes_{j=i+1}^n (b_j-\delta,b_j+\delta) \subseteq U(a)$, so $U(a)$ is open. 
In all cases, $F(a): U(a) \to \R$ satisfies $\partial^{\tilde{\alpha}} (F(a)) = (\partial^\alpha F)(a)$ for $\alpha \in \{0\}^ i\times\N^{n-i}$, $\abs{\alpha} \leq m$, $\tilde{\alpha}=(\alpha_{i+1}, \ldots, \alpha_n)$, so $F(a)$ is $C^m$. 

For each such $\alpha$ and every $C^m$ map $F$ as above, we get
$$\norm{\partial^{\tilde{\alpha}} (f(a))}_{L^\infty(A(a))}= \norm{(\partial^\alpha f)(a)}_{L^\infty(A(a))} \leq \norm{\partial^\alpha f}_{L^\infty(A)}\leq \norm{f}_{W^{m,\infty}(A)},$$ so the second part follows by taking the infimum over all such $F$.
\end{proof}

We continue with a useful notion from variational analysis to extend real valued functions \cite{rockafellar1998variational}, Chapter 1.

\begin{definition}
Let $f:\R^n \to \R_\infty$. Then $f$ is \emph{lower semi-continuous} (lsc) at $x \in \R^n$ if $f(x) = \liminf_{z \to x}f(z)$ and \emph{upper semi-continuous} (usc) at $x \in \R^n$ if $f(x) = \limsup_{z \to x}f(z)$. Here, $$\liminf_{z \to x}f(z) := \sup_{\delta >0} \inf_{z\in B_\delta(x)}f(z), \quad \limsup_{z \to x}f(z) := \inf_{\delta >0} \sup_{z\in B_\delta(x)}f(z).$$
We say that $f$ is \emph{lsc} (resp. \emph{usc}) if $f$ is lsc (resp. usc) at every $x \in \R^n$.

We let $\Gamma^\uparrow(f) := \{(x,y) \in \R^n \times \R:f(x)\leq y\}$ be the \emph{epigraph} of $f$ and $\Gamma^\downarrow(f) := \{(x,y) \in \R^n \times \R:f(x)\geq y\}$  be the \emph{hypograph} of $f$.
The \emph{domain} of $f$ is defined as $\dom(f):=f^{-1}(\R)$. 
\end{definition}

\begin{remark}\label{rem:lim}
Let $f:\R^n \to \R_\infty$. If $\liminf_{z \to x} f(z) = \limsup_{z \to x} f(z)$ for every $x \in \R^n$, then $f$ is continuous.
Also, $f$ is lsc if and only if $-f$ is usc as $\liminf_{z \to x} -f(z) = -\limsup_{z \to x} f(z)$. We have 
$\Gamma^\downarrow(-f) = \{(x,-y): (x,y)\in\Gamma^\uparrow(f)\}$, $\Gamma^\uparrow(-f) = \{(x,-y):(x,y) \in \Gamma^\downarrow(f)\}$.
\end{remark}

\begin{proposition}\label{prop:closed}
Let $f:\R^n \to \R_\infty$.
\begin{enumerate}[label=(\roman*)]
    \item If $f$ is lsc, then $\Gamma^\uparrow(f)$ is closed,
    \item If $f$ is usc, then $\Gamma^\downarrow(f)$ is closed.
\end{enumerate}
\end{proposition}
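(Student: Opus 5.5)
I will prove (i) directly from the definitions by showing that the complement of $\Gamma^\uparrow(f)$ in $\R^n\times\R$ is open; then I will derive (ii) from (i) using Remark \ref{rem:lim}.

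For (i), fix $(x,y)\in(\R^n\times\R)\setminus\Gamma^\uparrow(f)$, so $f(x)>y$ in $\R_\infty$. Since $f$ is lsc at $x$,
$$\sup_{\delta>0}\inf_{z\in B_\delta(x)}f(z)=f(x)>y.$$
Choose a real number $y'$ with $y<y'<f(x)$. This is possible even when $f(x)=+\infty$, and the case $f(x)=-\infty$ cannot occur because $f(x)>y\in\R$. By the definition of the supremum there is some $\delta>0$ with $\inf_{z\in B_\delta(x)}f(z)>y'$. Then $f(z)>y'$ for every $z\in B_\delta(x)$.

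Now set $\eta:=\min\{\delta,y'-y\}>0$. Every point $(z,w)$ with $\norm{z-x}<\eta$ and $\abs{w-y}<\eta$ satisfies $w<y'<f(z)$, so $(z,w)\notin\Gamma^\uparrow(f)$. This gives an open neighbourhood of $(x,y)$ contained in the complement, so the complement is open and $\Gamma^\uparrow(f)$ is closed.

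For (ii), suppose $f$ is usc. By Remark \ref{rem:lim}, $-f$ is lsc, so $\Gamma^\uparrow(-f)$ is closed by (i). Again by Remark \ref{rem:lim},
$$\Gamma^\downarrow(f)=\{(x,-y):(x,y)\in\Gamma^\uparrow(-f)\}.$$
So $\Gamma^\downarrow(f)$ is the image of a closed set under the homeomorphism $(x,y)\mapsto(x,-y)$ of $\R^{n+1}$, and is therefore closed.

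The only delicate point is the handling of infinite values, namely choosing a finite intermediate value $y'$. The argument above already covers this, so I expect no real obstacle.
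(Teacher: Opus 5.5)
Your proof is correct. Part (ii) is the paper's argument: pass to $-f$ via Remark \ref{rem:lim} and reflect in the last coordinate.

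For part (i) the routes differ. The paper gives no argument of its own and cites Theorem 1.6 of Rockafellar--Wets. That theorem is the general characterization stating that, for $f:\R^n\to\R_\infty$, lower semicontinuity on all of $\R^n$, closedness of the epigraph, and closedness of every sublevel set are equivalent. You instead prove directly the one implication that is needed, by showing that the complement of $\Gamma^\uparrow(f)$ is open. From $f(x)>y$ you insert a real $y'$ with $y<y'<f(x)$. You then use $\sup_{\delta>0}\inf_{z\in B_\delta(x)}f(z)=f(x)$ to get $f>y'$ on some $B_\delta(x)$, and take the product neighbourhood of radius $\min\{\delta,y'-y\}$.

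The citation buys brevity, and it also gives the converse (a closed epigraph forces lower semicontinuity), which the paper never uses. Your route buys a self-contained proof that works directly with the paper's own open-ball definition of $\liminf$, so nothing has to be matched against the conventions of the cited book. It also handles the infinite values explicitly: $f(x)=-\infty$ is excluded automatically, and $f(x)=+\infty$ is dealt with by choosing $y'$ finite.
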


\begin{proof}
(i) Follows from \cite{rockafellar1998variational}, Theorem 1.6. 

(ii) Follows from (i) and Remark \ref{rem:lim} replacing $f$ by $-f$.
\end{proof}

\begin{lemma}\label{lem:context}
Let $A \subseteq \R^n$. If $f:A \to \R$ is continuous, then $\underline{f}:\R^n \to \R_\infty$, where
\begin{equation*}
\underline{f}(x) := 
\begin{cases*}
\sup_{\delta>0}\inf_{z \in B_\delta(x)\cap A} f(z), \quad x\in\cl(A) \\
\infty, \quad x\in\R^n\setminus\cl(A)
\end{cases*}
\end{equation*}
 is the unique lsc extension of $f$ such that every lsc map $g:\R^n \to \R_\infty$ with $g|_A \leq f$ satisfies $g \leq \underline{f}$.
Also, $\overline{f}: \R^n \to \R_\infty$,
\begin{equation*}
\overline{f}(x) := 
\begin{cases*}
\inf_{\delta>0} \sup_{z\in B_\delta(x)\cap A} f(z), \quad x\in\cl(A) \\
- \infty, \quad x\in\R^n\setminus\cl(A)
\end{cases*}
\end{equation*}
is the unique usc extension of $f$ such that every usc map $h:\R^n \to \R_\infty$ with $h|_A \geq f$ satisfies $h \geq \overline{f}$.
Moreover, $\Gamma^\uparrow(\underline{f}) \subseteq \cl(A) \times \R$ and $\Gamma^\downarrow(\overline{f}) \subseteq \cl(A) \times \R$.
\end{lemma}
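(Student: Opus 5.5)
We treat $\underline{f}$ in detail; the statements for $\overline{f}$ then follow by applying the first part to $-f$. Indeed, one checks directly from the definitions that $\overline{f} = -\underline{(-f)}$, using $\inf(-\cdot)=-\sup(\cdot)$ and $-\infty$ outside $\cl(A)$. Remark \ref{rem:lim} then converts lsc into usc and epigraphs into hypographs.

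\emph{Extension.} For $x\in A$, continuity of $f$ at $x$ gives $\inf_{z\in B_\delta(x)\cap A} f(z)\to f(x)$ as $\delta\to 0$. Since these infima are nondecreasing as $\delta$ decreases, their supremum is $f(x)$, so $\underline{f}(x)=f(x)$.

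\emph{Lower semicontinuity.} Fix $x$ and $t<\underline{f}(x)$; it suffices to find a neighbourhood on which $\underline{f}>t'$ for every $t'<t$.
\begin{itemize}
\item If $x\notin\cl(A)$, some ball around $x$ misses $\cl(A)$, so $\underline{f}\equiv\infty$ there.
\item If $x\in\cl(A)$, choose $\delta$ with $\inf_{B_\delta(x)\cap A}f>t$. For $y\in B_{\delta/2}(x)$, either $y\notin\cl(A)$ and $\underline{f}(y)=\infty$, or $y\in \cl(A)$ and $B_{\delta/2}(y)\cap A\subseteq B_\delta(x)\cap A$, so $\underline{f}(y)\ge t$.
\end{itemize}
Hence $\liminf_{y\to x}\underline{f}(y)\ge t$ for every $t<\underline{f}(x)$, so $\liminf_{y\to x}\underline{f}(y)\ge\underline{f}(x)$. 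The reverse inequality $\liminf_{y\to x}\underline{f}(y)\le \underline{f}(x)$ holds for any function because $x\in B_\delta(x)$, so the convention's equality holds.

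\emph{Maximality.} Let $g$ be lsc with $g|_A\le f$. For $x\notin\cl(A)$ we have $\underline{f}(x)=\infty\ge g(x)$. For $x\in\cl(A)$,
\[
g(x)=\sup_\delta\inf_{B_\delta(x)} g\le \sup_\delta \inf_{B_\delta(x)\cap A} g\le \sup_\delta\inf_{B_\delta(x)\cap A} f=\underline{f}(x),
\]
where the first inequality uses that an infimum over a smaller, nonempty set is larger.

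\emph{Uniqueness.} If $h$ is another lsc extension with the same maximality property, then $\underline{f}$ is itself lsc with $\underline{f}|_A= f$, so the property of $h$ gives $\underline{f}\le h$. Symmetrically, $h$ is lsc with $h|_A=f$, so the property of $\underline{f}$ gives $h\le\underline{f}$. Thus $h=\underline{f}$.

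\emph{Epigraph.} If $(x,y)\in\Gamma^\uparrow(\underline{f})$, then $\underline{f}(x)\le y<\infty$, which forces $x\in\cl(A)$.

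The only mildly delicate step is lower semicontinuity at points of $\cl(A)\setminus A$ under the paper's convention that lsc means $f(x)=\liminf_{z\to x}f(z)$ with balls that include $x$. The argument above addresses this by verifying the inequality $\liminf\ge\underline{f}(x)$ directly rather than relying on continuity of $f$.
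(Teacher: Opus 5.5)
Your proof is correct and follows the same route as the paper. Both arguments get the extension property from continuity of $f$ on $A$. Both get maximality by comparing $\liminf_{z\to x} g(z)$ with the infimum over the nonempty set $B_\delta(x)\cap A$, uniqueness from applying the maximality property in both directions, and the epigraph inclusion from $\underline{f}=\infty$ off $\cl(A)$. The differences are that you actually verify lower semicontinuity of $\underline{f}$, which the paper only asserts ``by definition''. You also derive the claims for $\overline{f}$ from $\overline{f}=-\underline{(-f)}$ and Remark~\ref{rem:lim} instead of repeating the argument.
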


\begin{proof}
For each $x\in A$, the continuity of $f$ yields
$$\sup_{\delta>0}\inf_{z \in B_\delta(x)\cap A} f(z) = \inf_{\delta>0} \sup_{z\in B_\delta(x)\cap A} f(z) = f(x)$$
and hence $\underline{f}$ and $\overline{f}$ extend $f$. By definition, $\underline{f}$ is lsc and $\overline{f}$ is usc. 
Let $g$ and $h$ be as claimed. Then $\underline{f} \geq g$ and $\overline{f} \leq h$ on $\R^n \setminus \cl(A)$. For every $x \in \cl(A)$ and $\delta>0$, $B_\delta(x)\cap A \neq \emptyset$ and so $$g(x) = \liminf_{z \to x} g(z) \leq \sup_{\delta>0}\inf_{z \in B_\delta(x)\cap A} f(z) = \underline{f}(x)$$ and $$h(x) = \limsup_{z\to x} h(z) \geq \inf_{\delta>0} \sup_{z\in B_\delta(x)\cap A} f(z) = \overline{f}(x).$$
Uniqueness follows from the properties just shown. We have that $\Gamma^\uparrow(\underline{f}) \subseteq \cl(A) \times \R$ and $\Gamma^\downarrow(\overline{f}) \subseteq \cl(A) \times \R$, so the second part follows.
\end{proof}

\begin{definition}
For a continuous function $f:A \to \R$ where $A\subseteq \R^n$, call $\underline{f}$ the \emph{lsc envelope} of $f$ and $\overline{f}$ the \emph{usc envelope} of $f$ where $\underline{f}$ and $\overline{f}$ as in Lemma \ref{lem:context}.
\end{definition}

\section{O-minimal structures}\label{sec:omin}

Many areas of mathematics are built around distinguished classes of sets, e.g. the open sets in topology, or the measurable sets in measure theory. They are chosen to ground a rich theory and represent an interesting part of mathematics, motivated by very different reasons. In Euclidean space, o-minimal structures on the real field sit among these set systems, capturing a wide range of well-behaved sets, including all semi-algebraic sets. 
The core property of these rather large set systems is that each member admits a decomposition into finitely many \emph{cells}. These building blocks forbid the emergence of pathological behavior of functions and exclude the construction of complicated sets: no oscillations, no space filling curves, and no infinitely complex constructions via repeated application of basic set operations. In fact, not even the integers are definable in any o-minimal structure because they have infinitely many connected components. 
Nonetheless, rather interesting and flexible o-minimal structures exist, as described in the examples below.

Functions and sets contained in o-minimal structures not only have many regularity properties on bounded regions, but also behave controllably on unbounded domains. For the interested reader, we mention that the one-variable definable functions in an o-minimal structure give rise to a Hardy field \cite{miller2012basics,aschenbrenner2017asymptotic}.

This chapter heavily relies on \cite{Dries_1998}, a source for most fundamental results of o-minimality. For the convenience of the reader, we repeat the definition \ref{def:omin} of an o-minimal structure on the ordered field of real numbers.

\begin{definition}
An \emph{o-minimal structure} on $\R$ is a sequence $ \mathcal{S} := (\mathcal{S}_n)_{n \in \N}$ such that for all $n$:
\begin{enumerate}[label=(\roman*)]
    \item $\R^n \in\mathcal{S}_n \subseteq \mathcal{P}(\R^n)$,
    \item $A,B \in \mathcal{S}_n \implies A \cap B, A \cup B, \R^n\setminus A \in \mathcal{S}_n$,
    \item $\mathcal{S}_1$ consists of exactly the finite unions of points and open intervals in $\R$,
    \item $\{(x_1,x_2)\in\R^2: x_1<x_2\} \in \mathcal{S}_2$,
    \item $\{(x_1,x_2,x_3)\in\R^3: x_1+x_2=x_3\}, \{(x_1,x_2,x_3) \in\R^3: x_1\cdot x_2 = x_3\} \in \mathcal{S}_3$,
    \item $A \in \mathcal{S}_n \implies \R \times A, A \times \R \in \mathcal{S}_{n+1}$,
    \item $\{ (x_1,\ldots,x_n) \in \R^n: x_1 =x_n \} \in \mathcal{S}_n$,
    \item $A \in \mathcal{S}_{n+1} \implies \pi_{n+1}(A) \in \mathcal{S}_n$.
\end{enumerate}
A set $A \subseteq \R^n$ is \emph{definable} (in $\mathcal{S}$) if $A \in \mathcal{S}_n$. A map $f:A \to \R$ is \emph{definable} (in $\mathcal{S}$) if its graph $\Gamma(f) \subseteq \R^{n+1}$ is definable.
\end{definition}

\begin{remark}
The o-minimality refers to the constraint that the only definable subsets of $\R$ are finite unions of intervals and points. The classical definition of an o-minimal structure \cite{Dries_1998} does not include axiom (v). However, the field structure enables to define an intrinsic notion of smoothness which is essential for this paper.
\end{remark}

\begin{example}
If $\sigma:\R^n \to \R^n$ is a bijective linear map, then $\sigma$ is definable using the formula for matrix vector multiplication.
So for each $A \in \mathcal{S}_n$ it follows that $\sigma(A) \in \mathcal{S}_n$ as the image of a definable set under a definable map is definable.
\end{example}

We briefly treat an alternative way of viewing o-minimal structures through the lens of model theory.
Originally, sets contained in an o-minimal structure are defined by some first order formula in the underlying language of a certain model theoretic structure interpreting a total ordering. In our own words, we sketch the formalism of first order logic, but also refer to \cite{Dries_1998}, Chapter 1. For other introductory notes on mathematical logic, see for example \cite{hils2019first}.

Take sets of symbols $X=\{x_n\}_{n}$ (variables), $\mathcal{F}\supseteq \R \cup \{+,\cdot\}$ (function symbols) and $\mathcal{R} \supseteq \{=,<\}$ (relation symbols) and assign to each $f \in \mathcal{F}, R \in \mathcal{R}$ an \emph{arity} $n_f,n_R\in \N$ which is thought of as the number of inputs of $f$ and $R$ respectively. For example, $n_==n_<=n_+=n_\cdot=2$ and $n_r=0$ for all $r\in\R$ (strictly speaking, we distinguish between 'names' of real numbers as function symbols and their interpretation as a real number).

The symbols in $\mathcal{F} \cup \mathcal{R}$ are interpreted as functions $f:\R^{n_f} \to \R$, and relations $R \subseteq \R^{n_r}$ respectively. \emph{Terms} are built inductively by successively plugging in simpler terms in functions: each variable $x\in X$ and every constant $r \in \R$ is a term and if $f \in \mathcal{F}$ and $t_1,\ldots,t_{n_f}$ are terms, then $f(t_1,\ldots,t_{n_f})$ is a term.

A \emph{basic formula} is a statement about certain terms being in some relation. More precisely, if $R\in \mathcal{R}$ and $t_1,\ldots, t_{n_r}$ are terms, then $R(t_1,\ldots,t_{n_r})$ is a basic formula. The \emph{formulas} consist of basic formulas and are closed under boolean combination and quantification: for formulas $\phi$ and $\psi$, also $\phi \lor \psi$, $\phi \land \psi,\neg \phi,\forall x \phi$ and $\exists x \phi$ are formulas for every $x\in X$ with their usual interpretation. If a formula has free variables not under the scope of quantifiers, then one can consider the set defined by the formula. More precisely, if $\phi(\bar{x})$ is a formula with free variables $\bar{x} \in X^n$ for suitable $n$, then 
$$A:= \{a \in \R^n: \phi(a) \text{ holds}\}$$
is the set \emph{defined by $\phi$}. Here, $\phi(a)$ is a sentence with no free variables which has a binary truth value. A set that is defined by a formula is called a \emph{definable} set.

If the definable sets in $\R$ defined by formulas in $\mathcal{F} \cup \mathcal{R}$ are finite unions of points and intervals, then the definable sets in $\mathcal{F} \cup \mathcal{R}$ form an o-minimal structure on $\R$. 
The connection between the axioms from Definition \ref{def:omin} and the logical operations combine syntax and semantics as follows.
\begin{itemize}
    \item Unions, intersections and complements of sets correspond to boolean connectives $\lor,\land$ and $\neg$ applied to their defining formulas,
    \item Projections onto certain coordinates correspond to existential quantification over corresponding variables,
    \item Diagonals correspond to equalities among variables.
\end{itemize}

On the other hand, for a given o-minimal structure $\mathcal{S}$, viewing every element of $\mathcal{S}$ as the interpretation of a relation symbol in the language having for each $A \in \mathcal{S}_n$ a distinguished relation symbol $R_A$ with arity $n$ interpreted as $A \subseteq \R^n$ yields a model theoretic structure. This follows from the closure properties of o-minimal structures which ensure that no new definable sets arise from $\mathcal{S}$, hence $\mathcal{S}$ consists of exactly the definable sets of the described model theoretic structure.

We continue with some examples of well known o-minimal structures.

\begin{example}
The \emph{semi-algebraic} sets of $\R$ are described by the definable sets using the symbols $\mathcal{F} \cup \mathcal{R} =\R \cup \{+,\cdot,=,<\}$, an o-minimal structure $\mathcal{S} = (\mathcal{S}_n)_{n \in \N}$ where $\mathcal{S}_n$ consists of finite unions of sets of the form $\{x \in \R^n: p_1(x) > 0,\ldots, p_\ell(x) > 0,p(x)=0\}$ for polynomials $p,p_1,\ldots,p_\ell \in \R[X_1,\ldots,X_n]$, $\ell\in\N$ \cite{Dries_1998}. A function is semi-algebraic if its graph is semi-algebraic. Any ReLU neural network is semi-algebraic \cite{kratsios2026feedforwardneuralnetworkdefinable}.
\end{example}

Other interesting examples include $\R_{\exp}$, where $\mathcal{F} \cup \mathcal{R}= \R \cup \{\exp,+,\cdot,=,<\}$, interpreting $\exp:\R\to\R$ as the usual exponential function, and $\R_{\an}$,  containing for each $n$ all \emph{restricted analytic functions} $\R^n \to \R$ given by real analytic functions in $[-1,1]^n$ extended by zero.
Also the amalgamation $\R_{\an,\exp}$ of $\R_{\exp}$ and $\R_{\an}$ is an o-minimal structure \cite{van1994elementary}.
A more general source of o-minimal structures are quasi-analytic Denjoy-Carleman classes whose functions are locally given by their Taylor series \cite{rolin2003quasianalytic}. Although classical Weierstrass preparation fails for quasi-analytic Denjoy-Carleman classes, other resolutions of singularities apply to control the number of connected components of definable sets using a normalization algorithm as shown in \cite{rolin2003quasianalytic}.

Also, \emph{Pfaffian functions} form an o-minimal structure \cite{wilkie1999theorem}.
A Pfaffian function $f:\R^n \to \R$ is constructed via a sequence $f_1,\ldots,f_k = f$, $k\in\N$ of $C^1$ functions such that $$\partial^j f_i \in \R[x_1,\ldots,x_n,f_1,\ldots,f_i]$$ for all $i\in[k],j\in[n]$. For example, $\arctan$ is a Pfaffian function which is witnessed by the sequence $f_1(x) = 1/(x^2+1), f_2(x) =\arctan(x)$ as $f_1'(x) = -2x/(x^2+1)^2 = -2xf_1(x)^2$ and $f_2'(x) = 1/(x^2+1)=f_1(x)$.

For many other examples of definability of practically relevant functions in o-minimal structures, see \cite{kratsios2026feedforwardneuralnetworkdefinable}.

\subsection{Cells in o-minimal structures}

\emph{Let $\mathcal{S} = (\mathcal{S}_n)_{n \in \N}$ be an o-minimal structure on $\R$}.
First, we are interested in concrete descriptions of definable sets in higher dimensions. The axioms of o-minimal structures on $\R$ give exhaustive information for definable sets contained in $\R$ and the cell decomposition theorem shows that a similar flavor is reflected in higher dimensions. In fact, every definable set has finitely many connected components and is built up from sets of a special shape.  We are particularly interested in bounded definable sets $A \subseteq Q_n$, leading to the definition of bounded $C^m$-cells. To state the inductive definition of a $C^m$-cell, we refine the notion of smoothness. 

\begin{definition}
Let $A \subseteq \R^n$ be definable and $f:A \to \R$ definable. Then $f$ is said to be \emph{definably $C^m$} if there is some definable open set $U \supseteq A$ and a definable map $F:U \to \R$ such that $F|_A = f$ and $F$ is $C^m$, i.e. for each $\alpha \in \N^n$, $\abs{\alpha} \leq m$, the map $\partial^\alpha F: U \to \R$ exists and is continuous.
\end{definition}

\begin{remark}
Every definable map $f:A \to \R$ for $A \subseteq \R^n$ which is definably $C^m$ is also $C^m$ in the usual sense, witnessed by the same (definable) open $U$ and (definable) map $F$.
\end{remark}

\begin{definition}\label{def:cell}
Let $(i_1,\ldots,i_n) \in \{0,1\}^n$. We define inductively when $C \subseteq \R^n$ is an \emph{$(i_1,\ldots,i_n)$-$C^m$-cell}.
\begin{enumerate}[label=(\roman*)]
    \item The only $()$-$C^m$-cell is $\R^0:=\{*\}$,
    \item Suppose that $C \subseteq \R^n$ is a $(i_1, \ldots, i_n)$-$C^m$-cell and $f,g:C \to \R$ are bounded maps which are definably $C^m$ with $f=g$ or $f<g$. Then $(f,g)_C \subseteq \R^{n+1}$ is a  
\begin{itemize}[label=$\bullet$]
        \item  $(i_1, \ldots, i_n,0)$-$C^m$-cell if $f=g$,
        \item  $(i_1, \ldots, i_n,1)$-$C^m$-cell if $f<g$.
    \end{itemize}
\end{enumerate}
The \emph{$C^m$-cells} of $\mathcal{S}$ are all $(i_1,\ldots,i_n)$-$C^m$-cells for every $n$ and $(i_1,\ldots,i_n) \in \{0,1\}^n$. 
The \emph{signature} of a $C^m$-cell $C \subseteq \R^n$ is the unique sequence $(i_1,\ldots,i_n) \in \{0,1\}^n$ so that $C$ is an $(i_1,\ldots,i_n)$-$C^m$-cell.
Further, the \emph{dimension} $\dim(C) \in \{0,\ldots,n\}$ of $C$ is defined as $\dim(C) := i_1+\cdots+i_n$. 
\end{definition}

\emph{Let $A \subseteq \R^n$ be a bounded definable set}. We consider partitions of $A$ into $C^m$-cells. 

\begin{definition}
A \emph{$C^m$-cell decomposition} $\mathcal{C}\subseteq \mathcal{P}(A)$ of $A$ is a partition of $A$ into finitely many $C^m$-cells such that $\mathcal{C}_i:=\{\pi_n^{\leq i}(C): C \in \mathcal{C}\}$ is a $C^m$-cell decomposition of $\pi_n^{\leq i}(A)$ for all $i\in[n]$. A $C^m$-cell decomposition of $\pi_n^{\leq 1}(A)$ is a partition of $\pi_n^{\leq 1}(A)$ into finitely many intervals and points.
We say that $\mathcal{C}$ is \emph{compatible} with a definable set $B \subseteq A$ if $B$ is a union of $C^m$-cells in $\mathcal{C}$.
\end{definition}

We state a bounded version of the $C^m$-cell decomposition theorem, the main structural result about definable functions and sets \cite{Dries_1998}, (3.3), or \cite{coste1999introduction}, Theorem 6.6.

\begin{theorem}\label{thm:omincell}
For definable sets $A_1,\ldots, A_\ell \subseteq A,\ell\in\N$, and any definable function $f:A \to \R$, there is a $C^m$-cell decomposition $\mathcal{C}$ of $A$ compatible with $A_i$ for each $i\in[\ell]$ and such that $f|_C$ is $C^m$ for every $C^m$-cell $C \in \mathcal{C}$.
\end{theorem}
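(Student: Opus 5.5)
We prove Theorem \ref{thm:omincell} by the standard simultaneous induction on $n$, following \cite{Dries_1998}, Chapter 3, and \cite{coste1999introduction}, Theorem 6.6. Two statements are proved together. The first, $(\mathrm{I}_n)$, is the decomposition statement for definable sets $A_1,\ldots,A_\ell \subseteq A \subseteq \R^n$. The second, $(\mathrm{II}_n)$, says that for a definable $f:A\to\R$ there is a decomposition on whose cells $f$ is $C^m$. Boundedness of $A$ is harmless: the construction of each new cell uses only functions whose values are boundary points of the fibers of $A$ (or midpoints between them), so these functions are bounded whenever $A$ is. The case $n=1$ follows directly from axiom (iii). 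Every definable subset of $\R$ is a finite union of points and intervals. A one-variable definable function is $C^m$ off a finite set by the monotonicity theorem, applied iteratively to its derivatives, which are again definable because (v) and (viii) let us define difference quotients and limits.

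For the step $n \to n+1$ of $(\mathrm{I})$, let $A_1,\ldots,A_\ell \subseteq A \subseteq \R^{n+1}$. For $x \in \pi_{n+1}(A)$, the finiteness lemma gives a uniform bound $K$ on the number of boundary points of the fibers $A(x)$ and $A_j(x)$. Let $\xi_1(x)<\cdots<\xi_{k(x)}(x)$ be these boundary points. The sets $P_k := \{x : k(x)=k\}$ and the maps $\xi_i$ are definable. We add to the list, among other things, the sets $P_k$ and the sets recording which intervals $(\xi_i,\xi_{i+1})$ and points $\xi_i$ lie in each $A_j(x)$. Applying $(\mathrm{I}_n)$ and $(\mathrm{II}_n)$ to this finite list and to all the $\xi_i$ gives a decomposition $\mathcal{D}$ of $\pi_{n+1}(A)$. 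Over each $D\in\mathcal{D}$ the combinatorics are then constant and each $\xi_i|_D$ is $C^m$.

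We then take as cells the graphs $(\xi_i,\xi_i)_D$ and the bands $(\xi_i,\xi_{i+1})_D$. In the bounded setting the unbounded bands do not occur. By construction, projecting this decomposition to the first $i$ coordinates recovers $\mathcal{D}$ and its projections.

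The main obstacle is ensuring that the $\xi_i$ are definably $C^m$ on a definable open neighbourhood, which the cell definition requires, and not merely continuous on $D$. To obtain this we first establish a $C^m$ version of $(\mathrm{II}_n)$ on open cells, proceeding in three steps.
\begin{itemize}
\item Show that a definable function is continuous off a definable set of lower dimension, using monotonicity in each variable and the finiteness of discontinuities in each fibre.
\item Show, by the same argument, that each partial derivative exists and is continuous off such a set; the partial derivatives are definable as limits of difference quotients.
\item Iterate this $m$ times and recurse on the lower-dimensional exceptional set.
\end{itemize}
For a cell $C$ of lower dimension, $C$ is the graph of a $C^m$ map over an open cell, via a coordinate projection. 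This lets us pull $f$ back to an open set, apply the step above, and push the result forward. A definable $C^m$ extension to an open neighbourhood then exists: on an open cell take $F=f$, and on a graph cell compose with the projection.

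Finally, $(\mathrm{II}_{n+1})$ follows by applying $(\mathrm{I}_{n+1})$ to the sets where $f$ fails to be $C^m$, together with a dimension induction showing that these bad sets have lower dimension.
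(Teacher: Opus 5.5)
Your outline is the standard simultaneous induction of van den Dries and Coste, which is exactly what the paper relies on here: it gives no proof of its own, only the citation. Your remarks on boundedness, and on obtaining the open-neighbourhood extension by composing with a coordinate projection, correctly adapt that induction to the paper's definition of definably $C^m$. The one step stated too quickly is the one-variable input used in your base case and for the existence of partial derivatives: monotonicity gives continuity off a finite set but not differentiability. Before iterating on derivatives you need the separate standard lemma that a definable function of one variable is differentiable off a finite set; for instance, by Lebesgue's theorem on monotone functions the non-differentiability set is definable and null, hence finite.
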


\begin{remark}
For an infinite collection $\mathcal{B}$ of definable sets, the number of $C^m$-cells needed to decompose a member of $\mathcal{B}$ might be arbitrarily large, but there are certain natural classes of definable sets, called definable collections, which admit uniform upper bounds on the number of $C^m$-cells needed to decompose each individual member, see Subsection \ref{sec:deffam}. 
\end{remark}

\begin{lemma}\label{lem:defextfun}
Let $C \subseteq \R^n$ be a $C^m$-cell and $f:C \to \R$ definably $C^m$ and $i<n$.
Then for each $a \in \pi_n^{\leq i}(C)$, the map $f(a):C(a) \to \R$ is definably $C^m$.
\end{lemma}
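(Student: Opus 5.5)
The argument mirrors the proof of Lemma \ref{lem:fibermaps}, keeping track of definability throughout. Since $f$ is definably $C^m$, there is a definable open set $U \supseteq C$ and a definable $C^m$ map $F:U \to \R$ with $F|_C = f$. Fix $a \in \pi_n^{\leq i}(C)$. The candidate witness for $f(a)$ is the pair consisting of the fiber $U(a) \subseteq \R^{n-i}$ and the fiber map $F(a):U(a) \to \R$. Clearly $U(a) \supseteq C(a)$ and $F(a)|_{C(a)} = f(a)$.

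\textbf{Step 1: openness and smoothness.} Openness of $U(a)$ follows verbatim from the box argument in Lemma \ref{lem:fibermaps}. Likewise, $F(a)$ is $C^m$, with $\partial^{\tilde\alpha}(F(a)) = (\partial^\alpha F)(a)$ for $\alpha \in \{0\}^i\times\N^{n-i}$. Neither of these steps uses definability.

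\textbf{Step 2: definability of $U(a)$, $C(a)$ and $F(a)$.} Write
$$U(a) = \pi'\bigl(U \cap (\{a\}\times \R^{n-i})\bigr),$$
where $\pi'$ projects away the first $i$ coordinates. The set $\{a\}\times\R^{n-i}$ is definable: by axiom (v) and the fact that constants are definable, each condition $\{x : x_j = a_j\}$ is definable, and intersections of such sets are definable. Projecting away the first coordinates can be reduced to the axiomatic projection $\pi_{n+1}$ (which drops the last coordinate) by applying a coordinate permutation; such a permutation is a bijective linear map, hence preserves definability by the Example above. So $U(a)$ is definable, and the same argument shows $C(a)$ is definable. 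The graph of $F(a)$ is obtained from $\Gamma(F)$ in the same way, namely as the fiber of $\Gamma(F) \subseteq \R^{n+1}$ above $a$, so $F(a)$ is definable.

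Alternatively, Step 2 can be done in one line via formulas: $U(a)$ is defined by $\phi_U(a,y)$ with parameters $a$, and since $\mathcal{S}$ contains all real constants, parameters are allowed.

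\textbf{Main obstacle.} The only real point of care is justifying, from axioms (i)--(viii), that fibers of definable sets over a point are definable. This amounts to the projection-over-other-coordinates issue handled in Step 2 by the permutation argument. I would cite \cite{Dries_1998} for the standard fact that fibers $A(a)$ of definable $A$ are definable, and note that the cell structure of $C$ plays no role beyond guaranteeing $a \in \pi_n^{\leq i}(C)$, so that $C(a)$ is nonempty.
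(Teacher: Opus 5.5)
Your proposal is correct and follows essentially the same route as the paper: take the definable $C^m$ extension $F$ on a definable open $U\supseteq C$, pass to the fibers $U(a)$ and $F(a)$, obtain openness and $C^m$-smoothness from Lemma \ref{lem:fibermaps}, and conclude from the definability of these fibers. The paper simply asserts that $U(a)$ and $F(a)$ are definable, so your Step 2 supplies the standard justification it leaves implicit; note that the singletons $\{a_j\}$ come from axiom (iii) together with (vi), not from (v).
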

\begin{proof}
As $f$ is definably $C^m$, there is some definable open $U \supseteq C$ and a definable function $F: U \to \R$ such that $F|_C = f$ and for all $\alpha \in \N^n$, $\abs{\alpha} \leq m$, the map $\partial^\alpha F: U \to \R$ exists and is continuous.
Let $a \in \R^i$. Then $U(a) \supseteq C(a)$ is definable and as in the classical case in Lemma \ref{lem:fibermaps}, $U(a)$ is open in $\R^{n-i}$ and $F(a): U(a) \to \R$ witnesses that $f(a)$ is $C^m$. As $F(a)$ and $U(a)$ are definable, this gives that $f(a)$ is definably $C^m$.
\end{proof}

\subsection{Definable collections}\label{sec:deffam}

Natural potentially infinite classes of definable sets are given by \emph{definable collections} that consist of fibers of high dimensional definable sets. The advantage of definable collections is that the number of $C^m$-cells needed to decompose an individual fiber is uniformly bounded. This also reflects the fact that o-minimal structures have NIP, making definable collections PAC learnable \cite{laskowski1992vapnik}. Another instance of this result is finite sample complexity for most modern architectures of neural networks \cite{kratsios2026feedforwardneuralnetworkdefinable}. 

\begin{definition}
Let $A \subseteq \R^n$ and $D \subseteq \R^i,i<n$ be both definable. Call
$$\mathcal{A}_{i,D} :=\{A(a) \subseteq \R^{n-i}:a\in D\} \subseteq \mathcal{P}(\R^{n-i})$$
the \emph{definable collection coming from $A$ with parameter space $D \subseteq \R^i$}. Then a collection $\mathcal{H} \subseteq \mathcal{P}(\R^{n-i})$ is definable if there is some definable set $A \subseteq \R^n$ such that $\mathcal{H}=\mathcal{A}_{i,\R^i}$.
\end{definition}

\begin{remark}
Suppose $\mathcal{A}_{i,D}$ is the definable collection coming from $A$ with parameter space $D \subseteq \R^i, i<n$. For a definable injective map $h:\R^i \to Q_i$, there is some definable $A'\subseteq \R^n$ with $\mathcal{A}'_{i,h(D)} = \mathcal{A}_{i,D}$. Hence the parameter space of a definable collection can be augmented to $Q_i$ by setting a default value on $Q_i \setminus h(D)$. Such a map always exists as scaling down coordinates via the map $x \mapsto x/\sqrt{4+4x^2}$ is definable.
\end{remark}

\emph{Let $A \subseteq \R^n$ be definable}. The next statement illustrates that $C^m$-cell decompositions of high dimensional definable sets yield $C^m$-cell decompositions of fibers thereof, and thus give a uniform bound on the number of $C^m$-cells needed to decompose each fiber \cite{Dries_1998}.

\begin{lemma}\label{lem:uniformdeffam}
Every $C^m$-cell decomposition $\mathcal{C}$ of $A$ gives rise to a $C^m$-cell decomposition $\mathcal{C}(a) := \{C(a):C \in \mathcal{C},C(a)\neq \emptyset\}$ of $A(a)$ for each $a \in \R^i$. In particular, $|\mathcal{C}(a)| \leq |\mathcal{C}|$ for every $a\in\R^i$.
\end{lemma}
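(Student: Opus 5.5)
We prove the statement by induction on the number of coordinates, working one cell at a time. Since $\mathcal{C}(a)$ is assembled from the fibers $C(a)$ of the cells $C \in \mathcal{C}$, three things must be checked. First, each nonempty fiber $C(a)$ is a $C^m$-cell in $\R^{n-i}$. Second, these fibers partition $A(a)$. Third, the projection condition in the definition of a $C^m$-cell decomposition holds for $\mathcal{C}(a)$. The cardinality bound $|\mathcal{C}(a)| \leq |\mathcal{C}|$ is then immediate, because $C \mapsto C(a)$ maps $\mathcal{C}$ onto $\mathcal{C}(a)$.

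\textbf{Partition property.} This is elementary. For $y \in A(a)$ we have $(a,y) \in A$, so $(a,y) \in C$ for exactly one $C \in \mathcal{C}$. Hence the nonempty fibers $C(a)$ cover $A(a)$ and are pairwise disjoint. Moreover, distinct cells with nonempty fibers give distinct fibers, since the fibers are disjoint and nonempty.

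\textbf{Fibers of cells are cells.} Let $C$ be a $C^m$-cell in $\R^n$ and $a \in \pi_n^{\leq i}(C)$. We show by induction on $n-i$ (equivalently, on $n$ with $i$ fixed) that $C(a)$ is a $C^m$-cell whose signature is the tail $(i_{i+1},\ldots,i_n)$. Write $C=(f,g)_D$ with $D=\pi_n(C)$ a $C^m$-cell and $f,g$ bounded and definably $C^m$ on $D$.
\begin{itemize}
\item If $i=n-1$, then $D(a)=\{*\}$ is the only $()$-cell. The fiber is $C(a)=(f(a),g(a))_{\{*\}}$, which is a point or an open interval. This is a $C^m$-cell because maps on $\R^0$ are trivially definably $C^m$.
\item If $i<n-1$, we use the identity $C(a)=(f(a),g(a))_{D(a)}$, which holds directly from the definitions of fibers and generalized intervals. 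By induction, $D(a)$ is a $C^m$-cell. By Lemma \ref{lem:defextfun}, $f(a)$ and $g(a)$ are definably $C^m$. They remain bounded, and the relation $f=g$ or $f<g$ is preserved on $D(a)$. Hence $C(a)$ is a $C^m$-cell.
\end{itemize}
Definability of the fibers follows from the closure axioms, using intersection with $\{a\}\times\R^{n-i}$ and projection.

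\textbf{Projection condition.} This is the step needing the most care. We need $\{\pi^{\leq j}_{n-i}(C(a)) : C\in\mathcal{C}\}$ to be a $C^m$-cell decomposition of $\pi^{\leq j}_{n-i}(A(a))$ for each $j$. The key observation is
$$\pi^{\leq j}_{n-i}(C(a))=\bigl(\pi^{\leq i+j}_n(C)\bigr)(a),$$
together with the same identity for $A$. Since $\mathcal{C}_{i+j}$ is by hypothesis a $C^m$-cell decomposition of $\pi^{\leq i+j}_n(A)$, the set $\{\pi^{\leq j}_{n-i}(C(a))\}$ is precisely the family of nonempty fibers over $a$ of the cells in $\mathcal{C}_{i+j}$. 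We then apply the partition argument and the fiber-cell argument above to $\mathcal{C}_{i+j}$. Formally this is an induction on $n$ covering the whole statement at once, with the base case, where the fibers lie in $\R^1$, reducing to points and intervals.

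We expect the main obstacle to be purely bookkeeping: verifying the projection/fiber commutation identity and the identity $C(a)=(f(a),g(a))_{D(a)}$ carefully in the degenerate cases ($i=n-1$, and empty fibers), which the conventions $A():=A$ and $\R^0=\{*\}$ handle.
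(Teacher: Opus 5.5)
Your proposal is correct and is essentially the paper's argument. The paper's one-line proof is exactly your step that nonempty fibers of $C^m$-cells are $C^m$-cells, via Lemma \ref{lem:defextfun} and the inductive definition of a cell; your partition check and the identity $\pi^{\leq j}_{n-i}(C(a))=(\pi^{\leq i+j}_n(C))(a)$, which reduces the projection condition to $\mathcal{C}_{i+j}$ by induction on $n$, supply details the paper leaves implicit (and you rightly restrict to $a\in\pi^{\leq i}_n(C)$, since empty fibers are not cells).
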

\begin{proof}
By Lemma  \ref{lem:defextfun} and the inductive definition of a $C^m$-cell, it follows that if $C \subseteq \R^n$ is a $C^m$-cell and $a \in \R^i$, then $C(a) \subseteq \R^{n-i}$ is a $C^m$-cell.
\end{proof}

\begin{example}
Consider the semi-algebraic set $A:=\{(a,b,c,d,e,f,x,y)\in\R^{6+2}:ax^2+bxy+cy^2+dx+ey+f=0\}$ and the definable collection $\mathcal{A}_{6,\R^6}$ coming from $A$. The fibers $A(a,b,c,d,e,f)$ for $(a,b,c,d,e,f)\in\R^6$ describe all conic sections and degenerate cases thereof.
\end{example}

\begin{example}
The set $\mathcal{NN}(n,L,W,B)$ of ReLU neural networks $\R^n \to [0,1]$ as defined in the introduction can be viewed as a definable collection with parameter space 
$$\Theta := \{\theta \in [-B,B]^j: \theta \text{ has at most } W \text{ non-zero entries}\},$$ for some $j \leq 2\lceil W\rceil(\lceil L\rceil\lceil W\rceil+n)$ that encodes all weights and biases of the most general architecture $(n,\lceil W\rceil, \ldots ,\lceil W\rceil,1) \in \N^{\lceil L\rceil+2}$ of such networks (\cite{grohs2024proof}, proof of Lemma 6.1).
Let $A \subseteq \Theta \times \R^n \times \R$ be defined by 
$$(\theta,x,y) \in A \quad :\iff \quad \Phi(\theta)(x) = y.$$
Then $\mathcal{H}=\{A(\theta):\theta \in \Theta\}$ is a definable collection containing all possible graphs of realizations of neural networks parametrized by $\Theta$. Moreover, the hypothesis class $\{\chi_{2\Phi(\theta)\geq 1}:\theta\in\Theta\}$ of classifiers for a binary classification task coming from the ReLU neural network class $\mathcal{NN}(n,L,W,B)$ can be viewed as a definable collection.

Also, more general classes of neural networks with fixed architecture and definable activations can be described by definable collections \cite{kratsios2026feedforwardneuralnetworkdefinable}.
\end{example}

\section{Traceable sets}\label{sec:prep}

In this section, we study $C^m$-cells and finite disjoint unions thereof from a more general perspective. Instead of relying on definability, we only work with smoothness assumptions which in particular hold for $C^m$-cells. This separates the application within o-minimal structures from the more general approximation theory for sets of a cellular shape, making the required properties for our approach explicit. Recall the definition of traceable sets.

\begin{definition}
Call a set $A \subseteq \R^n$ \emph{$m$-traceable} if there exist sets $A_0,\ldots,A_n$ and bounded $C^m$ maps $f_i,g_i:A_{i-1} \to \R$ such that for each $i\in[n]$:
\begin{itemize}
    \item $A_i \subseteq \R^i$ and $A=A_n$, $A_0 = \{*\}$,
    \item $f_i<g_i$ or $f_i=g_i$,
    \item $\pi_i(A_i) = A_{i-1}$ and $A_i = (f_i,g_i)_{A_{i-1}}$.
\end{itemize}
We call the sequence $(\{f_i,g_i\}_{i\in[n]})$ the \emph{signature datum} of $A$ and $A_0,\ldots,A_n$ is the \emph{fundament} of $A$. 
\end{definition}

\begin{example}\label{ex:trace1}
Every singleton set $\{a\}$ for $a \in \R^n$ is $m$-traceable with signature datum satisfying $f_i(x_1,\ldots,x_{i-1})=g_i(x_1,\ldots,x_{i-1})=a_i$ and fundament $A_i = \{(a_1,\ldots,a_i)\} \subseteq \R^i$. 

Also any open cube $\bigtimes_{i=1}^n(a_i,b_i)$ for $a_i<b_i$, $i\in[n]$ is $m$-traceable with signature datum satisfying $f_i(x_1,\ldots,x_{i-1})=a_i, g_i(x_1,\ldots,x_{i-1})=b_i$ and fundament $A_i = \bigtimes_{j=1}^i(a_j,b_j) \subseteq \R^i$. 
So for any $C^m$ function $F:(-1/2,1/2)^n \to (-1/2,1/2)$, the sets $(-1/2,F)_{(-1/2,1/2)^n}$, $(F,F)_{(-1/2,1/2)^n}$ $(F,1/2)_{(-1/2,1/2)^n}$ are $m$-traceable by definition.
\end{example}

\begin{example}\label{ex:trace2}
Let 
$$A:= \{(x_1,x_2,x_3) \in \R^3:0<x_1<1,-1<x_2<1,0<x_3<\arctan(x_2/x_1)/\pi+1/2\} \subseteq \R^3.$$
Then $A$ is $m$-traceable with fundament $A_0 =\{*\},A_1=(0,1),A_2=(0,1)\times (-1,1), A_3 = A$ and signature datum satisfying
\begin{itemize}
    \item $f_1 = 0,\quad g_1=1$,
    \item $f_2(x_1)=-1,\quad g_2(x_1)=1$,
    \item $f_3(x_1,x_2) = 0,\quad g_3(x_1,x_2)=\arctan(x_2/x_1)/\pi+1/2$.
\end{itemize}
The function $g_3$ is depicted in Figure \ref{fig:trace2}.
\end{example}

\begin{example}\label{ex:trace3}
The set 
$$A := \{(x,y,z)\in\R^3:0 < x < 1, y=\sin(1/x), z=\cos(1/x)\} \subseteq \R^3$$
depicted in Figure \ref{fig:trace3} is $m$-traceable with fundament $A_0 =\{*\},A_1=(0,1),A_2=\{(x_1,x_2)\in\R^2:0<x_1<1,x_2=\sin(1/x_1)\},A_3=A$ and
signature datum satisfying 
\begin{itemize}
    \item $f_1 = 0,\quad g_1=1$,
    \item $f_2(x_1) = g_2(x_1) = \sin(1/x_1)$,
    \item $f_3(x_1,x_2) = g_3(x_1,x_2)=\cos(1/x_1)$.
\end{itemize}
\end{example}

\emph{Let $A$ be $m$-traceable with fundament $A_0,\ldots,A_n$ and signature datum $(\{f_i,g_i\}_{i\in[n]})$}. Then $A$ is also $m'$-traceable for $m'\leq m$ and each $A_i$ is $m'$-traceable with signature datum $(\{f_1,g_1\}, \ldots, \{f_i,g_i\})$ and fundament $A_0,\ldots, A_i$ for $i\in[n]$. 

Now fix $i\in[n]$.  Then $A_i$ is bounded and
$$(x_1,\ldots,x_i) \in A_i \iff \bigwedge_{j=1}^i x_j \in (\tilde{f}_j(x_1,\ldots,x_{j-1}),\tilde{g}_j(x_1,\ldots,x_{j-1}))_{\R^0}$$
where $\tilde{f}_j,\tilde{g}_j:\R^{j-1} \to \R_\infty$ extend $f_j,g_j$ by $\tilde{f}_j(x_1,\ldots,x_{j-1}):=\infty$ and $\tilde{g}_j(x_1,\ldots,x_{j-1}) := -\infty$ for $(x_1,\ldots,x_{j-1}) \in \R^{j-1}\setminus A_{j-1}$, $j\in[i]$.

For $j\in[i]$, this yields that $A_j = \pi_i^{\leq j}(A_i)$ and for $a \in A_j$,
$$(x_{j+1},\ldots,x_i) \in A_i(a) \iff \bigwedge_{k=j+1}^i x_k \in (\tilde{f}_k(a,x_{j+1},\ldots,x_{k-1}),\tilde{g}_k(a,x_{j+1},\ldots,x_{k-1}))_{\R^0}.$$
So $A_i(a)$ is also $m$-traceable with signature datum $(\{f_{j+1}(a), g_{j+1}(a)\},\ldots, \{f_i(a),g_i(a)\})$ and fundament $A_j(a), \ldots, A_i(a)$. The fundament and the signature datum of $A$ are unique. 

The next lemma provides a bound on the Lebesgue measure of $A_i$.

\begin{lemma}\label{lem:measuretraceable}
For each $i\in[n]$, $A_i$ is measurable and
$$\lambda^i(A_i) \leq  \prod_{j=1}^i \norm{g_j-f_j}_{L^\infty(A_{j-1})}$$
holds. In particular, if $f_j = g_j$ for some $j\in[n]$, then $\lambda^i(A) = 0$ for $i\geq j$.
\end{lemma}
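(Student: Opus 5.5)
We argue by induction on $i\in[n]$, proving measurability of $A_i$ and the product bound together, using Fubini--Tonelli for the inductive step. For $i=1$, $A_0=\{*\}$ and $f_1,g_1$ are constants, so $A_1=(f_1,g_1)_{\{*\}}$ is an open interval, a point, or empty. It is measurable with $\lambda^1(A_1)\leq g_1-f_1=\norm{g_1-f_1}_{L^\infty(A_0)}$, since the $L^\infty$ norm on the one-point set $A_0$ is just the value there.

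For the inductive step, assume $A_{i-1}$ is measurable with the stated bound. The maps $f_i,g_i$ are $C^m$ on $A_{i-1}$, hence restrictions of continuous maps $F_i,G_i$ on an open set $U\supseteq A_{i-1}$, and so they are Borel measurable on $A_{i-1}$. We write
\[A_i=\{(x,y)\in U\times\R: F_i(x)<y<G_i(x)\}\cap (A_{i-1}\times\R)\]
when $f_i<g_i$, and $A_i=\{(x,y)\in U\times \R: y=F_i(x)\}\cap(A_{i-1}\times\R)$ when $f_i=g_i$. Each is the intersection of a Borel set, defined by continuous inequalities or an equality, with the measurable cylinder $A_{i-1}\times\R$, so $A_i$ is measurable. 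By Tonelli,
\[\lambda^i(A_i)=\int_{A_{i-1}}\lambda^1(A_i(x))\,d\lambda^{i-1}(x)=\int_{A_{i-1}}(g_i-f_i)\,d\lambda^{i-1}\leq \norm{g_i-f_i}_{L^\infty(A_{i-1})}\,\lambda^{i-1}(A_{i-1}),\]
using that each fiber $A_i(x)$ is the interval $(f_i(x),g_i(x))$ or a point. The induction hypothesis then gives the product bound.

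One point needs care. If $A_{i-1}$ is a null set, the essential supremum over $A_{i-1}$ with respect to $\lambda^{i-1}$ may be $0$ or even $-\infty$, but then $\lambda^{i-1}(A_{i-1})=0$ and the middle integral is $0$, so the inequality still holds. Here we use the convention $0\cdot(\pm\infty)=0$, or we read the norm as a supremum over $A_{i-1}$. The supremum is finite because $f_i,g_i$ are bounded. This is the main, albeit minor, subtlety; it is the only place where the conventions matter.

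For the final claim, suppose $f_j=g_j$. Then every fiber $A_j(x)$ is a single point, so the integral above vanishes and $\lambda^j(A_j)=0$, whatever the earlier factors. For $i>j$ we have $A_i\subseteq A_j\times\R^{i-j}$, because $\pi_i^{\leq j}(A_i)=A_j$. Hence $\lambda^i(A_i)\leq\lambda^j(A_j)\cdot\infty=0$ by Tonelli again, and in particular $\lambda^n(A)=0$.
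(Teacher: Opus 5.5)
Your proof is correct and follows the same route as the paper: induction on $i$, with Fubini--Tonelli applied to the fibers $(f_i(x),g_i(x))$ over $A_{i-1}$. The differences are small. You prove measurability of $A_i$ directly, from continuity of the $C^m$ extensions and the measurable cylinder $A_{i-1}\times\R$, where the paper cites (the proof of) Lemma A.9 of Krapp--Wirth; and you handle the null-set convention for the essential supremum and the degenerate case $f_j=g_j$ explicitly, where the paper reads the latter off the product bound.
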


\begin{proof}
First, $A_1$ is measurable and $\lambda^1(A_1) = \lambda^1((f_1,g
_1)_{A_0}) = g_1-f_1 = \norm{g_1-f_1}_{L^\infty(\R^0)}$. 
Suppose that the claim holds for $A_1,\ldots,A_i,i<n$. 
The measurability of $A_{i+1}$ follows from (the proof of)\cite{krapp2025measurabilityfundamentaltheoremstatistical}, Lemma A.9.
Since $A_{i+1} = (f_{i+1},g_{i+1})_{A_i}$,
Fubini's theorem gives
$$\lambda^{i+1}(A_{i+1})= \int_{A_i} g_{i+1}(x)-f_{i+1}(x) dx \leq \lambda^i(A_i)\norm{g_{i+1}-f_{i+1}}_{L^\infty(A_i)} \leq \prod_{j=1}^{i+1} \norm{g_j-f_j}_{L^\infty(A_{j-1})}.$$
The second claim follows.
\end{proof}

\begin{corollary}\label{cor:measuretraceable}
For each $i\in[n]$, $a \in A_j$, $j<i$, we have that
$$\lambda^{i-j}(A_i(a)) \leq  \prod_{k=j+1}^i \norm{g_k(a)-f_k(a)}_{L^\infty(A_{k-1}(a))}.$$
Furthermore, if $f_k(a) = g_k(a)$ for some $j<k\leq i$, then $\lambda^{i-j}(A_i(a)) = 0$.
\end{corollary}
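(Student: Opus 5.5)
The plan is to reduce the corollary to Lemma \ref{lem:measuretraceable}, applied to the fiber $A_i(a)$, which the discussion preceding that lemma shows is itself traceable.

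\textbf{Step 1: the fiber is traceable.} Fix $j<i$ and $a\in A_j$. As observed before Lemma \ref{lem:measuretraceable}, $A_i(a)\subseteq\R^{i-j}$ is $m$-traceable. Its fundament is $A_j(a),A_{j+1}(a),\ldots,A_i(a)$, and its signature datum is
$$(\{f_{j+1}(a),g_{j+1}(a)\},\ldots,\{f_i(a),g_i(a)\}).$$
Since $a\in A_j$, we have $A_j(a)=\{*\}=\R^0$. So this is a genuine fundament starting at the singleton, after reindexing $k\mapsto k-j$. We also need the fiber maps $f_k(a),g_k(a)\colon A_{k-1}(a)\to\R$ to be bounded and $C^m$. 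Lemma \ref{lem:fibermaps} gives the $C^m$ property, and boundedness is inherited from $f_k,g_k$. The order condition also transfers pointwise: $f_k<g_k$ gives $f_k(a)<g_k(a)$, and $f_k=g_k$ gives $f_k(a)=g_k(a)$.

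\textbf{Step 2: apply the lemma.} Lemma \ref{lem:measuretraceable}, applied to the reindexed traceable set $A_i(a)$ with level $i-j$, gives measurability of $A_i(a)$ and the bound
$$\lambda^{i-j}(A_i(a))\le\prod_{k=j+1}^i\norm{g_k(a)-f_k(a)}_{L^\infty(A_{k-1}(a))}.$$

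\textbf{Step 3: the degenerate case.} Suppose $f_k(a)=g_k(a)$ for some $j<k\le i$. Then the $k$-th factor in the product vanishes, since it is the sup norm of the zero function on $A_{k-1}(a)$. The product is therefore $0$, so $\lambda^{i-j}(A_i(a))=0$.

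Alternatively, this case follows directly from the second part of Lemma \ref{lem:measuretraceable}, applied to the reindexed set.

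The only point needing care is the bookkeeping in Step 1. The reindexing must match the lemma's conventions, in particular that the base of the fundament is a singleton. The condition "$f_k<g_k$ or $f_k=g_k$" must also hold for the fiber maps, which it does because it holds pointwise.
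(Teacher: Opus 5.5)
Your proposal is correct and matches the paper's proof: the paper also just applies Lemma~\ref{lem:measuretraceable} to the $m$-traceable fiber $A_i(a)$, with fundament $A_j(a),\ldots,A_i(a)$ and signature datum $(\{f_{j+1}(a),g_{j+1}(a)\},\ldots,\{f_i(a),g_i(a)\})$, as set up in the discussion before that lemma. Your additional bookkeeping spells out what the paper leaves implicit: that $A_j(a)=\{*\}$ because $a\in A_j$, that the order condition and regularity pass to the fiber maps (for $k=j+1$ these are constants on a point), and that the degenerate case follows because a factor in the product vanishes.
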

\begin{proof}
Follows from Lemma \ref{lem:measuretraceable} applied to the $m$-traceable set $A_i(a)$ with signature datum $$(\{f_{j+1}(a), g_{j+1}(a)\},\ldots, \{f_i(a),g_i(a)\}).$$
\end{proof}

\begin{definition}
Define the \emph{extended signature datum} of $A$ by $(\{\underline{f}_i,\overline{g}_i\}_{i\in[n]})$ where $\underline{f}_i$ is the lsc envelope of $f_i$ and $\overline{g}_i$ is the usc envelope of $g_i$ for each $i\in[n]$.
For $m'\leq m$, we say that $A$ is \emph{$m'$-extendable} if for every $i\in[n]$, $\underline{f}_i|_{\cl(A_{i-1})}$ and $\overline{g}_i|_{\cl(A_{i-1})}$ are $C^{m'}$.
If $A \subseteq Q_n$ and each $f_i,g_i$ is the restriction of $C^m$ functions $F_i, G_i:Q_{i-1} \to \R$ with $\norm{F_i}_{W^{m,\infty}(Q_{i-1})} \leq B$ and $\norm{G_i}_{W^{m,\infty}(Q_{i-1})} \leq B$ for each $i\in[n]$, we say that $A$ has the \emph{$(m,B)$-extension property}.
\end{definition}

With the extended signature datum of $A$, an explicit description of its topological closure $\cl(A)$ is possible.

\begin{lemma}\label{lem:closuredescription}
For each $i \in [n]$, we have 
$$\cl(A_i) \subseteq \{(x_1,\ldots,x_i) \in \R^i: \underline{f}_1 \leq x_1 \leq \overline{g}_1, \ldots, \underline{f}_i(x_1,\ldots,x_{i-1}) \leq x_i \leq \overline{g}_i(x_1,\ldots,x_{i-1})\}.$$
Moreover, if $A$ is $1$-extendable, then equality holds.
\end{lemma}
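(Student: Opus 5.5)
We prove both parts by induction on $i$. Write $D_i$ for the right-hand side set. For $i=1$, $A_1=(f_1,g_1)_{A_0}$ is an open interval or a point, and $\underline{f}_1=f_1$, $\overline{g}_1=g_1$ are constants. Hence $\cl(A_1)=[f_1,g_1]=D_1$, and both claims hold.

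\emph{Inclusion.} Suppose $\cl(A_{i-1})\subseteq D_{i-1}$ and let $x=(x',x_i)\in\cl(A_i)$. Choose $z^k=(z'^k,z^k_i)\in A_i$ with $z^k\to x$. Then $z'^k\in A_{i-1}$ and $z'^k\to x'$, so $x'\in\cl(A_{i-1})\subseteq D_{i-1}$. This gives the first $i-1$ constraints.

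For the last constraint, we have $f_i(z'^k)\le z_i^k\le g_i(z'^k)$. By the definition of the lsc envelope in Lemma~\ref{lem:context}, $\underline{f}_i(x')=\sup_{\delta}\inf_{B_\delta(x')\cap A_{i-1}}f_i\le\liminf_k f_i(z'^k)\le x_i$. Symmetrically, $\overline{g}_i(x')\ge\limsup_k g_i(z'^k)\ge x_i$. Since $x'\in\cl(A_{i-1})$, the envelopes are given by the first case of their defining formula. Hence $x\in D_i$.

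\emph{Equality under $1$-extendability.} Assume $\cl(A_{i-1})=D_{i-1}$ and let $x=(x',x_i)\in D_i$. Then $x'\in D_{i-1}=\cl(A_{i-1})$, so there are $z'^k\in A_{i-1}$ with $z'^k\to x'$.

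Since $\underline{f}_i|_{\cl(A_{i-1})}$ is $C^1$, it is in particular continuous on $\cl(A_{i-1})$, so $f_i(z'^k)=\underline{f}_i(z'^k)\to\underline{f}_i(x')$. Likewise $g_i(z'^k)\to\overline{g}_i(x')$. Write $x_i=(1-t)\underline{f}_i(x')+t\,\overline{g}_i(x')$ with $t\in[0,1]$; when the two envelope values at $x'$ coincide, take $t=0$.

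Put $y^k:=(1-t)f_i(z'^k)+t\,g_i(z'^k)$. Then $y^k\to x_i$. If $f_i=g_i$, then $y^k=f_i(z'^k)$ and $(z'^k,y^k)\in\Gamma(f_i)=A_i$. If $f_i<g_i$, then for $t\in(0,1)$ the point $(z'^k,y^k)$ lies in $(f_i,g_i)_{A_{i-1}}=A_i$. For $t\in\{0,1\}$, we instead use $t_k:=1/k$ or $1-1/k$; the points stay in $A_i$ and still converge to $x$. In every case $x\in\cl(A_i)$.

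The only delicate points are these endpoint cases and the fact that continuity of the envelope restricted to $\cl(A_{i-1})$ is exactly what lets us pass to the limit along sequences from $A_{i-1}$. This is where the $1$-extendability hypothesis is used; only continuity is needed, so the $C^1$ assumption is more than enough.
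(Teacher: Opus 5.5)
Your proof is correct and follows the paper's structure: induction on $i$, with the inclusion coming from semicontinuity of the envelopes (your sequence argument unfolds what the paper gets from closedness of the epigraph of $\underline{f}_i$ and the hypograph of $\overline{g}_i$, Proposition~\ref{prop:closed}), and equality coming from the induction hypothesis together with continuity of the envelopes on $\cl(A_{i-1})$. Your parametrization $x_i=(1-t)\underline{f}_i(x')+t\,\overline{g}_i(x')$, perturbed to $t_k\in(0,1)$ at the endpoints (taking $k\ge 2$ so that $t_k=1/k$ or $1-1/k$ lies strictly inside), merges the paper's three-case open-box argument into a single argument.
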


\begin{proof}
Considering $A_1=(f_1,g_1)_{A_0}$, we see that $\cl(A_1)=\{x_1\in\R:f_1\leq x_1\leq g_1\}=\{x_1\in\R:\underline{f}_1\leq x_1\leq \overline{g}_1\}$.

Suppose the statement holds for $A_1,\ldots,A_i$, $i<n$.
It is enough to show that $\cl(A_{i+1}) \subseteq \Gamma^\uparrow(\underline{f}_{i+1}) \cap \Gamma^\downarrow(\overline{g}_{i+1})$ since by assumption
\begin{align*}
\dom(\underline{f}_{i+1}) = \dom(\overline{g}_{i+1})=\cl(A_i) \subseteq \{ & (x_1,\ldots,x_i) \in \R^i: \quad  \underline{f}_1 \leq x_1 \leq \overline{g}_1, \ldots, \\ & \underline{f}_i(x_1,\ldots,x_{i-1}) \leq x_i \leq \overline{g}_i(x_1,\ldots,x_{i-1})\}
\end{align*}
and $\Gamma^\uparrow(\underline{f}_{i+1}),\Gamma^\downarrow(\overline{g}_{i+1}) \subseteq \cl(A_i) \times \R$ by Lemma \ref{lem:context}.
Since $A_{i+1} \subseteq \Gamma^\uparrow(\underline{f}_{i+1})\cap\Gamma^\downarrow(\overline{g}_{i+1})$ and $\Gamma^\uparrow(\underline{f}_{i+1})\cap\Gamma^\downarrow(\overline{g}_{i+1})$ is closed by Proposition \ref{prop:closed}, we get that $\cl(A_{i+1}) \subseteq \Gamma^\uparrow(\underline{f}_{i+1})\cap\Gamma^\downarrow(\overline{g}_{i+1})$.

For the second part, assume that $A$ is $1$-extendable and that equality holds for $A_1,\ldots,A_i$, $i<n$. Suppose that $(x_1,\ldots,x_{i+1}) \in \R^{i+1}$ satisfies
$$\underline{f}_1 \leq x_1 \leq \overline{g}_1, \ldots, \underline{f}_{i+1}(x_1,\ldots,x_i) \leq x_{i+1} \leq \overline{g}_{i+1}(x_1,\ldots,x_i).$$
Then $(x_1,\ldots,x_i) \in \cl(A_i)$ and it remains to show that $U_\delta \cap A_{i+1} \neq \emptyset$ for every open box $U_\delta:= \bigtimes_{j=1}^{i+1} (x_j-\delta,x_j+\delta)$ around $(x_1,\ldots,x_{i+1})$. Note that $\pi_{i+1}(U_\delta) = \bigtimes_{j=1}^i (x_j-\delta,x_j+\delta)$ and $\pi_{i+1}(U_\delta) \cap A_i \neq \emptyset$. 
Since $A$ is $1$-extendable, $\underline{f}_{i+1}|_{\cl(A_i)}$ and $\overline{g}_{i+1}|_{\cl(A_i)}$ are continuous. So for any $\varepsilon > 0$, there is $\delta>0$ with 
$$f_{i+1}(\pi_{i+1}(U_\delta) \cap A_i) \subseteq (\underline{f}_{i+1}(x_1,\ldots,x_i)-\varepsilon, \underline{f}_{i+1}(x_1,\ldots,x_i)+\varepsilon)$$
and 
$$g_{i+1}(\pi_{i+1}(U_\delta)\cap A_i) \subseteq (\overline{g}_{i+1}(x_1,\ldots,x_i)-\varepsilon, \overline{g}_{i+1}(x_1,\ldots,x_i)+\varepsilon).$$
We distinguish three cases.

\emph{Case 1}. $\varepsilon_1 := x_{i+1} - \underline{f}_{i+1}(x_1,\ldots,x_i) >0$ and $\varepsilon_2 := \overline{g}_{i+1}(x_1,\ldots,x_i) - x_{i+1} > 0$. 

\noindent
Here, there is some $\delta > 0$ such that 
$$f_{i+1}(\pi_{i+1}(U_\delta) \cap A_i) \subseteq (\underline{f}_{i+1}(x_1,\ldots,x_i)-\varepsilon_1, \underline{f}_{i+1}(x_1,\ldots,x_i)+\varepsilon_1) \subseteq (-\infty, x_{i+1})$$
and
$$g_{i+1}(\pi_{i+1}(U_\delta)\cap A_i) \subseteq (\overline{g}_{i+1}(x_1,\ldots,x_i)-\varepsilon_2, \overline{g}_{i+1}(x_1,\ldots,x_i)+\varepsilon_2) \subseteq (x_{i+1},\infty).$$
In particular, if $(x_1',\ldots,x_i') \in \pi_{i+1}(U_\delta)\cap A_i$, then $f_{i+1}(x_1',\ldots,x_i') < x_{i+1} < g_{i+1}(x_1',\ldots,x_i')$, so $(x_1',\ldots,x_i',x_{i+1}) \in U_\delta \cap A_{i+1}$.

\emph{Case 2}. $\varepsilon_1 = 0$. 

\noindent
Take any $(x_1',\ldots,x_i') \in \pi_{i+1}(U_\delta)\cap A_i$ such that $f_{i+1}(x_1',\ldots,x_i') \in (x_{i+1}-\delta/2, x_{i+1}+\delta/2)$ and define 
$$x_{i+1}' := f_{i+1}(x_1',\ldots,x_i') + \min \{ \delta,g_{i+1}(x_1',\ldots,x_i') - f_{i+1}(x_1',\ldots,x_i') \}/2.$$
Then $(x_1',\ldots,x_{i+1}') \in U_\delta \cap A_{i+1}$.

\emph{Case 3}. $\varepsilon_2=0 \neq \varepsilon_1$.

\noindent
Take any $(x_1',\ldots,x_i') \in \pi_{i+1}(U_\delta)\cap A_i$ such that $g_{i+1}(x_1',\ldots,x_i') \in (x_{i+1}-\delta/2, x_{i+1}+\delta/2)$ and define 
$$x_{i+1}' := g_{i+1}(x_1',\ldots,x_i') - \min \{ \delta,g_{i+1}(x_1',\ldots,x_i') - f_{i+1}(x_1',\ldots,x_i') \}/2.$$
Then $(x_1',\ldots,x_{i+1}') \in U_\delta \cap A_{i+1}$.
\end{proof}

If $A$ is not $m$-extendable, then the second conclusion of Lemma \ref{lem:closuredescription} might or might not fail as the revisited examples show.

\begin{example}
Let $A$ be as in example \ref{ex:trace2}. Then $A$ is not $m$-extendable since at $x_1=x_2=0$, there is no continuous extension of $g_3$ to the closure of its domain, as depicted in Figure \ref{fig:trace2}.
\begin{figure}[htbp]
\centering
\begin{tikzpicture}[scale=1.2]
\begin{axis}[
    view={50}{20},
    axis lines=center,
    xlabel={$x$},
    ylabel={},
    zlabel={},
    xmin=0, xmax=1,
    ymin=-1, ymax=1,
    zmin=0, zmax=1,
    domain=0:1,
    y domain=-1:1,
    samples=30,
    samples y=30,
]

\addplot3[
    surf,
]
{atan(y/x)/180+1/2};
\addplot3[
    surf,
    opacity=0.25,
    shader=flat,
]
{atan(y/x)/180+1/2};
\end{axis}
\end{tikzpicture}
\caption{The upper boundary of $A$ approaches any $x_3 \in (0,1)$ as $(x_1,x_2) \to (0,0)$}
\label{fig:trace2}
\end{figure}
\end{example}

\begin{example}
The set $A$ as in example \ref{ex:trace3} is not $m$-extendable. 
The lsc extension of $f_2$ and the usc extension of $g_2$ are given by $\underline{f}(0)= -1$ and $\overline{g}(0) = 1$.
The lsc extension of $f_3$ and the usc extension of $g_3$ are given by $\underline{f}_3(0,x_2) = -\sqrt{1-x_2^2}$ and $\overline{g}_3(0,x_2) = \sqrt{1-x_2^2}$ for $x_2 \in [-1,1]$. Since $\cl(A) = A \cup \{(0,x_2,x_3):x_2^2+x_3^2=1\} \cup \{(1,\sin(1),\cos(1))\}$ does not include the filled circle $\{(0,x_2,x_3):x_2^2+x_3^2\leq 1\}$, the second conclusion of Lemma \ref{lem:closuredescription} fails, see Figure \ref{fig:trace3}.

\begin{figure}[htbp]
\centering
\begin{tikzpicture}[scale=1.2]

\draw[->] (-0.1,0,0) -- (2.5,0,0) node[right] {$x$};
\draw[->] (0,-1.5,0) -- (0,1.5,0) node[left] {$y$};
\draw[->] (0,0,-1.5) -- (0,0,1.5) node[above] {$z$};

\draw[thick,domain=0:6,samples=100,smooth]
plot ({2*exp(-\x)}, {sin(360*\x)}, {cos(360*\x)});

\end{tikzpicture}
\caption{The set $A$ approaches $\{(0,x_2,x_3):x_2^2+x_3^2=1\}$, rather than $\{(0,x_2,x_3):x_2^2+x_3^2\leq 1\}$.}
\label{fig:trace3}
\end{figure}
\end{example}

Using the description of $\cl(A)$ from Lemma \ref{lem:closuredescription} allows to explicitly analyze the frontier of $A$.

\begin{corollary}\label{cor:frontier}
The frontier of $A$ is contained in a union of $2n$ (not necessarily distinct or non-empty) sets
\begin{align*}
\fr(A) & \subseteq  \{\underline{f}_1 = x_1 \leq \overline{g}_1, \ldots, \underline{f}_n(x_1,\ldots,x_{n-1})\leq x_n \leq \overline{g}_n(x_1,\ldots,x_{n-1})\} \\
&  \cup \{\underline{f}_1 \leq x_1 = \overline{g}_1, \ldots, \underline{f}_n(x_1,\ldots,x_{n-1})\leq x_n \leq \overline{g}_n(x_1,\ldots,x_{n-1})\} \\
& \cup \ldots \\
& \cup \{\underline{f}_1 \leq x_1 \leq \overline{g}_1, \ldots, \underline{f}_n(x_1,\ldots,x_{n-1}) = x_n \leq \overline{g}_n(x_1,\ldots,x_{n-1})\} \\
& \cup \{\underline{f}_1 \leq x_1 \leq \overline{g}_1, \ldots, \underline{f}_n(x_1,\ldots,x_{n-1})\leq x_n = \overline{g}_n(x_1,\ldots,x_{n-1})\}
\end{align*}
where in each of the $2n$ sets exactly one equality occurs. 
If $A$ is $1$-extendable, then $\lambda^n(\fr(A))= 0$.
\end{corollary}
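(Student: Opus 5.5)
The plan has two parts. The inclusion comes from set algebra on the closure description, and the measure-zero claim from a Fubini argument on graphs of continuous functions.

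\emph{Inclusion.} By Lemma \ref{lem:closuredescription},
$$\cl(A) \subseteq \{\underline{f}_1 \le x_1 \le \overline{g}_1,\ldots,\underline{f}_n(x_1,\ldots,x_{n-1}) \le x_n \le \overline{g}_n(x_1,\ldots,x_{n-1})\}.$$
Take $x \in \fr(A) = \cl(A)\setminus A$. I argue by contradiction: suppose every inequality in this description is strict. Then I show inductively in $i$ that $(x_1,\ldots,x_i) \in A_i$. The base case is $A_0=\{*\}$. For the step, assume $(x_1,\ldots,x_{i-1}) \in A_{i-1}$. By Lemma \ref{lem:context} the envelopes agree with $f_i, g_i$ on $A_{i-1}$, so strictness reads $f_i(x_1,\ldots,x_{i-1}) < x_i < g_i(x_1,\ldots,x_{i-1})$. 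This gives $(x_1,\ldots,x_i) \in (f_i,g_i)_{A_{i-1}} = A_i$. Strictness also excludes the degenerate case $f_i=g_i$: there the envelopes coincide on $A_{i-1}$ and no strict inequality is possible.

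So $x \in A_n = A$, which contradicts $x \notin A$. Hence at least one inequality is an equality, and $x$ lies in one of the $2n$ listed sets. If a prefix is not in $\cl(A_{i-1})$, the envelopes take the values $\pm\infty$ there; but $x\in\cl(A)$ forces every prefix into the closure of $A_{i-1}$, so this case does not occur.

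\emph{Measure zero under 1-extendability.} Each of the $2n$ sets is contained in a set of the form
$$E_i := \{x \in \R^n : (x_1,\ldots,x_{i-1}) \in \cl(A_{i-1}),\; x_i = h(x_1,\ldots,x_{i-1}),\; (x_1,\ldots,x_n)\in \cl(A)\},$$
where $h$ is $\underline{f}_i|_{\cl(A_{i-1})}$ or $\overline{g}_i|_{\cl(A_{i-1})}$. By $1$-extendability $h$ is continuous, indeed $C^1$, on the closed set $\cl(A_{i-1})$. Its graph $\Gamma(h)\subseteq\R^i$ is therefore closed and measurable. By Fubini it is $\lambda^i$-null, since every fiber over $(x_1,\ldots,x_{i-1})$ is a single point.

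Next, $E_i \subseteq \Gamma(h)\times K$ with $K$ bounded, because $A$ is bounded (all $f_j,g_j$ are bounded). Thus $\lambda^n(E_i) \le \lambda^i(\Gamma(h))\cdot\lambda^{n-i}(K) = 0$. Taking the finite union over the $2n$ sets gives $\lambda^n(\fr(A))=0$.

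\emph{Main obstacle.} The only delicate point is the inductive membership argument. There one must verify that the envelopes coincide with the original boundary functions on $A_{i-1}$, which Lemma \ref{lem:context} provides, and handle the degenerate case $f_i=g_i$ correctly. Measurability is routine because the graphs involved are closed.
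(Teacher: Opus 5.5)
Your proof is correct. The inclusion part is essentially the paper's argument. The paper takes the minimal index $i$ at which $f_i(x_1,\ldots,x_{i-1})<x_i<g_i(x_1,\ldots,x_{i-1})$ fails and uses that the envelopes agree with $f_i,g_i$ on $A_{i-1}$. That is the same induction you run by contradiction, and you handle the degenerate case $f_i=g_i$ correctly.

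For the null-set part you take a genuinely different route. The paper applies the Whitney extension theorem to get global $C^1$ extensions $F_i,G_i$ of $\underline{f}_i|_{\cl(A_{i-1})}$ and $\overline{g}_i|_{\cl(A_{i-1})}$. It then views $\Gamma(F_i)\times\R^{n-i}$ and $\Gamma(G_i)\times\R^{n-i}$ as $(n-1)$-dimensional submanifolds, namely zero sets of the submersions $x\mapsto x_i-F_i(x_1,\ldots,x_{i-1})$ and $x\mapsto G_i(x_1,\ldots,x_{i-1})-x_i$. Finally it cites that lower-dimensional submanifolds are null.

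You use only continuity of $h$ on the closed set $\cl(A_{i-1})$, which makes the graph closed, and then Fubini with one-point fibres. This avoids both the extension theorem and the submanifold fact, and never uses differentiability.

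Your argument in fact shows more. The envelope $\underline{f}_i$ is lsc and $\overline{g}_i$ is usc, so both are Borel and finite exactly on $\cl(A_{i-1})$ (since $f_i,g_i$ are bounded). Their graphs over $\cl(A_{i-1})$ are therefore Borel sets with singleton fibres. Combined with the inclusion from the first part, which is unconditional, the same Tonelli argument gives $\lambda^n(\fr(A))=0$ for every $m$-traceable $A$. The $1$-extendability hypothesis is not needed for this conclusion. It only matters for the equality case of Lemma \ref{lem:closuredescription}, which the paper uses later for the horizon-function description of $\cl(A)$.

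The paper's heavier route buys nothing extra for this corollary. At most, it anticipates the global extensions $F_i,G_i$ that reappear in the approximation section.
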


\begin{proof}
For $x \in \fr(A)$, let $i \in [n]$ be minimal such that 
$$f_i(x_1,\ldots,x_{i-1}) < x_i < g_i(x_1,\ldots,x_{i-1})$$
does not hold. 
By minimality of $i$, $(x_1,\ldots,x_{i-1}) \in \dom(f_i) = \dom(g_i)$.
As $\underline{f}_i(x_1,\ldots,x_{i-1}) \leq x_i \leq \overline{g}_i(x_1,\ldots,x_{i-1})$ and $\pi_n^{<i}(x) \in A_{i-1}$,
$\underline{f}_i(x_1,\ldots,x_{i-1}) = x_i$ or $x_i = \overline{g}_i(x_1,\ldots,x_{i-1})$ must be true.

For the second part, we show that the right hand side is a null-set. Assume that $A$ is $1$-extendable. Applying the Whitney extension theorem \cite{whitney1992analytic} to $\underline{f}_i|_{\cl(A_{i-1})}$ and $\overline{g}_i|_{\cl(A_{i-1})}$ yields the existence of global $C^m$ extensions $F_i:\R^{i-1} \to \R$ of $\underline{f}_i|_{\cl(A_{i-1})}$ and $G_i:\R^{i-1}\to\R$ of $\overline{g}_i|_{\cl(A_{i-1})}$ for each $i \in [n]$. Then $\Gamma(F_i) \times \R^{n-i}$ and $\Gamma(G_i) \times \R^{n-i}$, whose union over $i\in[n]$ contains $\fr(A)$, are $C^m$ submanifolds of $\R^n$ of dimension $n-1$, given as the zero sets of the global $C^m$ submersions $\R^n \to \R$, $x \mapsto x_i - F_i(x_1,\ldots,x_{i-1})$ and $x \mapsto G_i(x_1,\ldots,x_{i-1}) - x_i$. As finite unions of $C^m$ submanifolds of $\R^n$ of dimension less than $n$ are null-sets \cite{hirsch2012differential}, the statement follows.
\end{proof}

\emph{Assume that $A \subseteq Q_n$ is $m$-extendable}. We associate to $A$ the constant 
$$B_A^m := \inf  \max_{j\in[n]} \{\max_{\abs{\alpha}\leq m} \norm{\partial^\alpha F_j}_{L^\infty Q_{j-1}}, \max_{\abs{\alpha}\leq m} \norm{\partial^\alpha G_j}_{L^\infty Q_{j-1}} \}$$
where for $i\in[n]$, $F_i$ and $G_i$ range over all $C^m$ functions $Q_{i-1} \to \R$ extending $f_i$ and $g_i$ respectively. Recall that $\mathcal{K}_{m,n}^B$ is the set of all $m$-traceable subsets of $Q_n$ with the $(m,B)$-extension property and that $\mathcal{K}_{m,n}^{B,\ell}$ is the set of all $(m,\ell)$-traceable subsets of $Q_n$ with the $(m,B,\ell)$-extension property. Note that $\mathcal{K}_{m,n}^{B,\ell'} \subseteq \mathcal{K}_{m,n}^{B,\ell}$ for $\ell' \leq \ell$.
\emph{Let $\Omega \subseteq \R^n$ be $(m,\ell)$-traceable}. Then $\Omega = A^1 \cup \cdots \cup A^l$ for some $l \leq \ell$ and pairwise disjoint $m$-traceable $A^j$ for $j\in[l]$. 
If $l$ and $A^j$ can be chosen so that $A^j$ is $m$-extendable for every $j\in[l]$, then $\Omega$ is said to be \emph{$(m,\ell)$-extendable}. 
\emph{Assume that $\Omega \subseteq Q_n$ is $(m,\ell)$-extendable}.
Associate to $\Omega$ the constant 
$$B_\Omega^m := \inf\{\max_{j\in[l]} B_{A^j}^m: \quad \Omega = A^1\cup\cdots\cup A^l,l\leq\ell, A^j \in \bigcup_{B>0}\mathcal{K}_{m,n}^B \text{ pairwise disjoint}\}.$$
This definition is consistent with $\mathcal{K}_{m,n}^B \subseteq \mathcal{K}_{m,n}^{B,\ell}$for $\ell \geq 1$.

\begin{lemma}\label{lem:extprop}
The following statements hold.
\begin{enumerate}[label=(\roman*)]
    \item For every $B > B_A^m$, we have that $A \in \mathcal{K}_{m,n}^B$.
    \item For every $B > B_\Omega^m$, we have that $\Omega \in \mathcal{K}_{m,n}^{B,\ell}$.
\end{enumerate}
\end{lemma}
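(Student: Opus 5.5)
Both parts come straight from the definition of an infimum, and (ii) reduces to (i).

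For (i), fix $B > B_A^m$. Since $A$ is $m$-extendable, $\underline{f}_i|_{\cl(A_{i-1})}$ and $\overline{g}_i|_{\cl(A_{i-1})}$ are $C^m$. Their extensions restrict to $C^m$ functions on $Q_{i-1}$ extending $f_i$ and $g_i$, so the set over which the infimum defining $B_A^m$ is taken is nonempty and $B_A^m<\infty$. Because $B > B_A^m$, the infimum property gives, for each $i\in[n]$, $C^m$ functions $F_i,G_i:Q_{i-1}\to\R$ extending $f_i,g_i$ with
$$\max_{j\in[n]}\max_{\abs{\alpha}\leq m}\bigl\{\norm{\partial^\alpha F_j}_{L^\infty(Q_{j-1})},\norm{\partial^\alpha G_j}_{L^\infty(Q_{j-1})}\bigr\} < B.$$
In particular $\norm{F_i}_{W^{m,\infty}(Q_{i-1})}\leq B$ and $\norm{G_i}_{W^{m,\infty}(Q_{i-1})}\leq B$ for every $i$. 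This is the $(m,B)$-extension property as phrased in the definition in this section, which only requires $F_i,G_i$ to be defined on $Q_{i-1}$. Together with $A\subseteq Q_n$ being $m$-traceable, this gives $A\in\mathcal{K}_{m,n}^B$.

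The main obstacle is the mismatch with the version in the main-results section, which asks for $F_i,G_i$ defined on all of $\R^{i-1}$. A $C^m$ map on the closed cube $Q_{i-1}$ is, by the paper's convention, the restriction of a $C^m$ map on an open neighborhood $U\supseteq Q_{i-1}$. To pass to $\R^{i-1}$ without raising the norm on $Q_{i-1}$, multiply by a smooth cutoff $\psi$ with $\psi\equiv1$ on a neighborhood of $Q_{i-1}$ and $\operatorname{supp}\psi\subseteq U$, and extend by zero. This is $C^m$ on $\R^{i-1}$ and agrees with $F_i$ on $Q_{i-1}$, so the $W^{m,\infty}(Q_{i-1})$ norm is unchanged. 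Having strict inequality $<B$ also leaves slack if some approximation is needed here.

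For (ii), fix $B > B_\Omega^m$. By the definition of $B_\Omega^m$ as an infimum, there is a decomposition $\Omega=A^1\cup\cdots\cup A^l$ with $l\leq\ell$, pairwise disjoint $A^j\in\bigcup_{B'>0}\mathcal{K}_{m,n}^{B'}$, and $\max_j B_{A^j}^m<B$. Each $A^j$ lies in $Q_n$, is $m$-traceable, and has its boundary functions extendable to $Q_{i-1}$ (this is the extendability needed in (i)). Applying (i) to each $A^j$ gives $A^j\in\mathcal{K}_{m,n}^B$. Hence $\Omega$ has the $(m,B,\ell)$-extension property, so $\Omega\in\mathcal{K}_{m,n}^{B,\ell}$.
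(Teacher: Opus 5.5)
Your proof is correct and follows the paper's argument: (i) is the infimum property of $B_A^m$, and (ii) picks a decomposition with $\max_j B_{A^j}^m<B$ and applies (i) to each piece. The paper uses the Whitney extension theorem to get global extensions on $\R^{i-1}$; your cutoff argument instead makes explicit a point the paper leaves implicit, namely that near-optimal extensions on $Q_{i-1}$ can be made global without changing their $W^{m,\infty}(Q_{i-1})$ norm. Your finiteness remark is unnecessary, since $B>B_A^m$ already yields an admissible tuple with value below $B$; its ``restrict to $Q_{i-1}$'' step would in any case need that same cutoff.
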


\begin{proof}
(i) Since $A$ is $m$-extendable, it follows for each $i \in [n]$, that $\underline{f}_i|_{\cl(A_{i-1})}$ and $\overline{g}_i|_{\cl(A_{i-1})}$ are $C^m$ and by the Whitney extension theorem \cite{whitney1992analytic}, there are global $C^m$ extensions $F_i:\R^{i-1} \to \R$ of $\underline{f}_i|_{\cl(A_{i-1})}$ and $G_i:\R^{i-1}\to\R$ of $\overline{g}_i|_{\cl(A_{i-1})}$. Restricting $F_i$ and $G_i$ to $Q_{i-1}$ yields a bound on their Sobolev norms since the continuous functions $\abs{\partial^\alpha F_i},\abs{\partial^\alpha G_i}:Q_{i-1} \to \R$ attain their maximal values on their compact domains. By definition of $B_A^m$, it is possible to choose $F_i$ and $G_i$ in a way such that 
$$B_A^m < \max \{ \norm{F_i}_{W^{m,\infty}(Q_{i-1})}, \norm{G_i}_{W^{m,\infty}(Q_{i-1})}\} < B.$$ So $A$ has the $(m,B)$-extension property.

(ii) By (i) and the definition of $B_\Omega^m$, we get that for any $B > B_\Omega^m$, there are $A^1,\ldots,A^l \in \mathcal{K}_{m,n}^B$ such that $\Omega = A^1\cup\cdots\cup A^l$, so $\Omega \in \mathcal{K}_{m,n}^{B,\ell}$.
\end{proof}

 We study parametric collections coming from fibers of elements of $\mathcal{K}_{m,k+n}^{B,\ell}$.

\begin{lemma}\label{lem:traceablefibers}
Let $\Omega \in \mathcal{K}_{m,k+n}^{B,\ell}$. Then $\Omega(a) \in \mathcal{K}_{m,n}^{B,\ell}$ for each $a\in Q_k$.
\end{lemma}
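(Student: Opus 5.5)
Write $\Omega = A^1\cup\cdots\cup A^l$ with $l\le \ell$ and pairwise disjoint $A^j \in \mathcal{K}_{m,k+n}^B$. Then $\Omega(a) = A^1(a)\cup\cdots\cup A^l(a)$, and the fibers $A^j(a)$ are pairwise disjoint because the $A^j$ are. Some of these fibers may be empty. We simply discard them: the number of remaining pieces is still at most $\ell$, and the empty union (when all fibers are empty) is allowed since $l$ may be $0$. It therefore suffices to prove the following for a single $A \in \mathcal{K}_{m,k+n}^B$ and $a \in Q_k$ with $A(a) \neq \emptyset$: we have $A(a) \in \mathcal{K}^B_{m,n}$.

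For traceability, let $A_0,\ldots,A_{k+n}$ be the fundament of $A$ and $(\{f_i,g_i\})$ its signature datum. The discussion after Example \ref{ex:trace3} shows the following. Since $A(a)\ne\emptyset$, we have $a \in A_k = \pi^{\le k}_{k+n}(A)$. Hence $A(a) = A_{k+n}(a)$ is traceable with fundament $A_k(a)=\{*\}, A_{k+1}(a),\ldots,A_{k+n}(a)$ and signature datum $(\{f_{k+1}(a),g_{k+1}(a)\},\ldots,\{f_{k+n}(a),g_{k+n}(a)\})$. We still need to check that each $f_{k+i}(a), g_{k+i}(a)$ is a bounded $C^m$ map on $A_{k+i-1}(a)$:
\begin{itemize}
\item the $C^m$ property is exactly Lemma \ref{lem:fibermaps};
\item boundedness is inherited from $f_{k+i},g_{k+i}$;
\item the condition "$f<g$ or $f=g$" restricts to fibers.
\end{itemize}
Also $A(a)\subseteq Q_n$, since $A \subseteq Q_{k+n}$.

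For the $(m,B)$-extension property, take the extensions $F_{k+i},G_{k+i}:\R^{k+i-1}\to\R$ from the hypothesis, with $\norm{F_{k+i}}_{W^{m,\infty}(Q_{k+i-1})}\le B$, and define the fiber maps $F_{k+i}(a),G_{k+i}(a):\R^{i-1}\to\R$. These are $C^m$ by Lemma \ref{lem:fibermaps}, with $U=\R^{k+i-1}$ and $U(a)=\R^{i-1}$. They extend $f_{k+i}(a)$ and $g_{k+i}(a)$ on $A_{k+i-1}(a)$.

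The norm bound is the only slightly delicate point, because the $W^{m,\infty}$ norm is taken over $Q_{i-1}$ and not over an arbitrary set. Since $a\in Q_k$, we have $\{a\}\times Q_{i-1}\subseteq Q_{k+i-1}$. For every multi-index $\alpha\in\{0\}^k\times\N^{i-1}$ with $|\alpha|\le m$, this gives
\[
\sup_{y\in Q_{i-1}}\abs{\partial^{\tilde\alpha}(F_{k+i}(a))(y)} = \sup_{y\in Q_{i-1}} \abs{(\partial^\alpha F_{k+i})(a,y)} \le \norm{\partial^\alpha F_{k+i}}_{L^\infty(Q_{k+i-1})}.
\]
By continuity of $\partial^\alpha F_{k+i}$, the pointwise supremum over the slice agrees with the essential sup over $Q_{k+i-1}$, so no null-set issue arises. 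Hence $\norm{F_{k+i}(a)}_{W^{m,\infty}(Q_{i-1})}\le B$, and the same holds for $G_{k+i}(a)$.

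Since $a\in Q_k$ is assumed, this norm step goes through directly, and the statement follows.
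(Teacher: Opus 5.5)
Your proof is correct and follows the paper's argument. You decompose $\Omega$ into disjoint pieces, pass to the fibers of the fundament and signature datum, and fiber the extensions $F_{k+i},G_{k+i}$; your slice estimate is exactly Lemma \ref{lem:fibermaps} applied with $A=Q_{k+i-1}$, using that the fiber of $Q_{k+i-1}$ over $a\in Q_k$ is $Q_{i-1}$. You also add two points of care the paper passes over: discarding empty fibers, which is needed because the empty set is not $m$-traceable, and using continuity to compare the essential supremum on a null slice.
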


\begin{proof}
By assumption, there are pairwise disjoint $A^1,\ldots,A^l \in \mathcal{K}_{m,k+n}^B$, $l\leq \ell$, such that
$\Omega = A^1\cup\cdots\cup A^l$. Then $\Omega(a) = A^1(a) \cup \cdots \cup A^l(a)$. 
For $i \in [l]$, let $(\{f^i_j,g^i_j\}_{j\in[k+n]})$ be the signature datum of $A^i$.
For each $j\in[k+n]$, there are $C^m$ maps $F_j^i,G_j^i:Q_{j-1}\to\R$ extending $f_j^i$ and $g_j^i$ respectively such that 
$$\max_{j\in[k+n]}\max\{\norm{F_j^i}_{W^{m,\infty}(Q_{j-1})},\norm{G_j^i}_{W^{m,\infty}(Q_{j-1})}\}\leq B.$$
Then, for $j>k$, the maps $F_j^i(a)$ and $G_j^i(a)$ extending $f_j^i(a)$ and $g_j^i(a)$ satisfy 
$$\max_{j>k}\max\{\norm{F_j^i(a)}_{W^{m,\infty}(Q_{j-1-k})},\norm{G_j^i(a)}_{W^{m,\infty}(Q_{j-1-k})}\}\leq B,$$
invoking Lemma \ref{lem:fibermaps}. Since $(\{f_j^i(a),g_j^i(a)\}_{j>k})$ is the signature datum of $A^i(a)$, we get $A^i(a) \in \mathcal{K}_{m,n}^B$.
Hence $A^i(a) \in \mathcal{K}_{m,n}^B$ for every $i\in[l]$, so $\Omega(a) \in \mathcal{K}_{m,n}^{B,\ell}$.
\end{proof}

\subsection{The property of being extendable}\label{sec:extendable}

In general, derivatives of bounded functions on open domains can be uncontrollable towards the boundary, hence not every $(m,\ell)$-traceable subset of $Q_n$ is $(m,\ell)$-extendable. But we expect that every $(m,\ell)$-traceable subset of $Q_n$ is relatively close to being $(m,\ell)$-extendable by infinitesimal shrinking. 

\begin{lemma}\label{lem:nottoofar}
Let $\ell \in \N$ and $\Omega \subseteq Q_n$ be an $(m,\ell)$-traceable set. For every $\varepsilon > 0$, there is an $(m,\ell)$-traceable set $\Omega'\subseteq \Omega$ with the $(m,B,\ell)$-extension property for some $B=B(\varepsilon)>0$ such that 
$$\lambda^n(\Omega \setminus \Omega') \leq \varepsilon.$$
\end{lemma}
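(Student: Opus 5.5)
The plan is to shrink each traceable piece of $\Omega$ separately, so that its boundary functions only need to be evaluated on a compact subset of the (open) domain where they are $C^m$. There they can be cut off to globally $C^m$ functions with finite Sobolev norm.

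\textbf{Reduction to one piece.} Write $\Omega = A^1\cup\cdots\cup A^l$ with $l\le\ell$ and the $A^j$ pairwise disjoint and $m$-traceable. If some $A^j$ has a degenerate signature ($f_i=g_i$ for some $i$), then $\lambda^n(A^j)=0$ by Lemma \ref{lem:measuretraceable}, and we simply discard it. The resulting union has at most $\ell$ pieces; if all pieces are discarded, $\Omega'=\emptyset$ is the union of $l=0$ pieces, which is allowed since $l\le\ell$ and $\ell\in\N$. It then suffices to show: for every nondegenerate $m$-traceable $A\subseteq Q_n$ and every $\eta>0$, there is an $m$-traceable $A'\subseteq A$ in some $\mathcal{K}_{m,n}^{B}$ with $\lambda^n(A\setminus A')\le\eta$. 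Applying this with $\eta=\varepsilon/\ell$ and taking the maximum of the finitely many $B$'s gives the claim, because subsets of pairwise disjoint sets stay pairwise disjoint.

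\textbf{Shrinking a nondegenerate piece.} Let $A$ have fundament $A_0,\ldots,A_n$ and signature $f_i<g_i$ for all $i$. By induction each $A_i$ is open: $A_{i-1}$ is open and $f_i,g_i$ are continuous. Fix $\delta\in(0,1/2)$ and define inductively $A'_0=\{*\}$ and
\[
f_i' := f_i+\delta(g_i-f_i),\qquad g_i' := g_i-\delta(g_i-f_i) \quad\text{on } A'_{i-1},\qquad A'_i:=(f_i',g_i')_{A'_{i-1}}.
\]
Then $f_i'<g_i'$ and $A'_i\subseteq A_i$. The key claim, proved by induction, is that $\cl(A'_i)$ is a compact subset of $A_i$. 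Boundedness is inherited from $A_i\subseteq Q_i$. If $x\in\cl(A'_i)$, then its projection lies in $\cl(A'_{i-1})\subseteq A_{i-1}$, where $f_i,g_i$ are continuous. Passing to the limit gives $f_i'\le x_i\le g_i'$ at that projection, and hence $f_i<x_i<g_i$ strictly, so $x\in A_i$.

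\textbf{Extension with bounded norm.} Since $f_i,g_i$ are $C^m$ on $A_{i-1}$, there are $C^m$ extensions $\tilde F_i,\tilde G_i$ on an open set $U\supseteq A_{i-1}$; replacing $U$ by $U\cap A_{i-1}$ we may take $U=A_{i-1}$, which is open. Choose a smooth bump $\psi_i$ with $\psi_i\equiv1$ near the compact set $\cl(A'_{i-1})$ and support compact inside $A_{i-1}$. Then
\[
F_i:=\psi_i\bigl(\tilde F_i+\delta(\tilde G_i-\tilde F_i)\bigr),\qquad G_i:=\psi_i\bigl(\tilde G_i-\delta(\tilde G_i-\tilde F_i)\bigr),
\]
both extended by zero, are compactly supported $C^m$ functions on $\R^{i-1}$. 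They extend $f_i',g_i'$, and their $W^{m,\infty}(Q_{i-1})$ norms are finite because continuous derivatives on a compact support attain their maxima. Let $B$ be the maximum of these norms over $i$; then $A'\in\mathcal{K}_{m,n}^B$.

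\textbf{Measure estimate.} Let $A'(\delta)$ denote the set constructed above. These sets increase as $\delta\downarrow0$, and their union is $A$: every $x\in A$ satisfies $f_i<x_i<g_i$ for all $i$ with a positive relative margin, so $x\in A'(\delta)$ for all sufficiently small $\delta$. By continuity of Lebesgue measure from below ($\lambda^n(A)\le1$, and measurability holds by Lemma \ref{lem:measuretraceable}), $\lambda^n(A\setminus A'(\delta))\to0$, so a suitable $\delta$ achieves $\le\eta$.

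\textbf{Main obstacle.} The delicate step is the compactness claim $\cl(A'_{i-1})\subseteq A_{i-1}$. It is what makes the cutoff construction possible, and it is the only place where continuity of $f_i,g_i$ near the frontier would otherwise be needed. The induction above handles it; the remaining care is only bookkeeping to choose the $C^m$ extension domains inside $A_{i-1}$.
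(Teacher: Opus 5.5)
Your proof is correct, and its core construction is the paper's: the same convex-combination shrinking $f_i+\delta(g_i-f_i)$, $g_i-\delta(g_i-f_i)$, and the same key fact that $\cl(A'_{i-1})$ is a compact subset of the open set $A_{i-1}$. The paper only asserts this fact; your induction proves it. Where you differ is in the two verification steps.

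\textbf{Extension step.} The paper goes through the envelope framework. It observes that $\underline{f}_i^\delta$ and $\overline{g}_i^\delta$, restricted to the closure, are $C^m$, so the shrunk set is $m$-extendable. It then invokes the Whitney extension theorem together with Lemma~\ref{lem:extprop}. Your bump-function cutoff instead produces compactly supported global $C^m$ extensions directly. This is more elementary and bypasses both Whitney's theorem and the lsc/usc envelopes.

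\textbf{Measure step.} The paper proves the recursive bound $\lambda^{i+1}(A_{i+1}\setminus A^\delta_{i+1})\le \lambda^i(A_i\setminus A_i^\delta)\norm{g_{i+1}-f_{i+1}}_{L^\infty}+2\delta\prod_{j\le i+1}\norm{g_j-f_j}_{L^\infty}$. A discrete Gr\"onwall argument then gives $\lambda^n(A\setminus A^\delta)\le 2n\delta\prod_{j}\norm{g_j-f_j}_{L^\infty(A_{j-1})}$. You use monotonicity in $\delta$ and continuity of Lebesgue measure from below instead. Your argument is shorter but purely qualitative. Since $\norm{g_j-f_j}_{L^\infty}\le 1$ for subsets of $Q_n$, the paper's estimate shows that the single choice $\delta=\varepsilon/(2n\ell)$ works uniformly for every $(m,\ell)$-traceable $\Omega\subseteq Q_n$. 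The constant $B(\varepsilon)$ is non-uniform in either approach.

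\textbf{Degenerate pieces.} You discard the null pieces, whereas the paper replaces each by a singleton. Both are harmless, since the definition permits $l=0$.
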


\begin{proof}
Let $\varepsilon > 0$ and pairwise disjoint $m$-traceable sets $A^1,\ldots,A^l$, $l\leq \ell$ with fundaments $A_0^j,\ldots,A_n^j$ and signature data $(\{f_i^j,g_i^j\}_{i\in[n]})$ for $j\in[l]$ be given such that $\Omega=A^1\cup\cdots\cup A^l$. 
We show that there is some $B>0$ and $m$-traceable sets $(A^j)'\subseteq A^j$, $j\in[l]$ such that $(A^j)'$ is $(m,B)$-extendable and $\Omega' = (A^1)'\cup\cdots\cup (A^l)'$ satisfies the conclusion of the Lemma. 

Fix $j\in[l]$. If there is some $i\in[n]$ such that $f_i^j = g_i^j$, then Lemma \ref{lem:measuretraceable} yields that $\lambda^n(A^j) = 0$ and we can take $(A^j)' := \{x\}$ for any $x \in A^j$. Then the extended signature datum of $(A^j)'$ is equal to the signature datum of $\{x\}$ as in Example \ref{ex:trace1}. 

For notational convenience, we set $f_i:=f_i^j$ and $g_i:=g_i^j$ for $i\in[n]$, and $A:=A^j$.
Suppose that $f_i<g_i$ for all $i\in[n]$. For $0 < \delta < 1/2$ and $i\in[n]$, define $f^\delta_i, g^\delta_i: A_{i-1} \to \R$ by $$f^\delta_i(x) :=(1-\delta)f_i(x)+\delta g_i(x)$$ and 
$$g^\delta_i(x) := \delta f_i(x) + (1-\delta)g_i(x).$$
Then $f_i < f^\delta_i < g^\delta_i < g_i$ and recursively setting $A_i^\delta := (f^\delta_i|_{A_{i-1}^\delta},g^\delta_i|_{A_{i-1}^\delta})_{A_{i-1}^\delta}$ for $i\in[n]$ with $A_0^\delta = A_0$ yields the $m$-traceable set $A^\delta := A_n^\delta$ with signature datum $(\{f_1^\delta|_{\R^0},g_1^\delta|_{\R^0}\},\ldots,\{f_n^\delta|_{A_{n-1}^\delta},g_n^\delta|_{A_{n-1}^\delta}\})$. Since $\cl(A_i^\delta)$ is a compact subset of $A_i$ for each $i\in[n]$, the functions $\underline{f}_i^\delta|_{\cl(A_i^\delta)}$ and $\overline{g}_i^\delta|_{\cl(A_i^\delta)}$ are $C^m$, witnessed by their $C^m$ extensions $f_i^\delta$ and $g_i^\delta$ to an open neighborhood of $\cl(A_i^\delta)$ contained in $A_i$. Thus $A^\delta$ is $m$-extendable and by the Whitney extension theorem, there exist $C^m$ maps $F_i^\delta,G_i^\delta: Q_{i-1} \to \R$ such that $F_i^\delta|_{\cl(A_i^\delta)}=\underline{f}_i^\delta|_{\cl(A_i^\delta)}$ and $G_i^\delta|_{\cl(A_i^\delta)}=\overline{g}_i^\delta|_{\cl(A_i^\delta)}$. It follows from Lemma \ref{lem:extprop} that $A^\delta$ has the $(m,B)$-extension property for 
$$B:=\max_{j\in[n]} \{\norm{F_j^\delta}_{W^{m,\infty}Q_{j-1}}, \norm{G_j^\delta}_{W^{m,\infty}Q_{j-1}} \}.$$
Note that $$\norm{f^\delta_i-f_i}_{L^\infty(A_{i-1})} = \norm{g_i-g^\delta_i}_{L^\infty(A_{i-1})} = \delta \norm{g_i-f_i}_{L^\infty(A_{i-1})}.$$
Applying Lemma \ref{lem:measuretraceable} to $A_i^\delta$ yields 
$$\lambda^i(A_i^\delta) \leq  \prod_{j=1}^i \norm{g_j^\delta-f_j^\delta}_{L^\infty(A^\delta_{j-1})} \leq \prod_{j=1}^i  \norm{g_j-f_j}_{L^\infty(A^\delta_{j-1})}$$
For $A_1 = (f_1,g_1)_{A_0}$ we explicitly have
$$\lambda^1(A_1\setminus A_1^\delta) = \lambda^1((f_1,(1-\delta)f_1+\delta g_1)_{A_0} \cup (\delta f_1 + (1-\delta)g_1,g_1)_{A_0})= 2\delta (g_1 - f_1).$$
Next, if $(x_1,\ldots,x_{i+1}) \in A_{i+1} \setminus A_{i+1}^\delta$ for $i \in [n]$, then $(x_1,\ldots,x_i) \in A_i\setminus A_i^\delta$ and 
$$f_{i+1}(x_1,\ldots,x_i) < x_{i+1} < g_{i+1}(x_1,\ldots,x_i),$$
or $(x_1,\ldots,x_i) \in A_i^\delta$ and $x_{i+1} \in (f_{i+1}, f^\delta_{i+1}) \cup (g^\delta_{i+1}, g_{i+1})$. So
\begin{align*}
\lambda^{i+1}(A_{i+1} \setminus A_{i+1}^\delta) & \leq  \lambda^i(A_i\setminus A_i^\delta)\norm{g_{i+1}-f_{i+1}}_{L^\infty(A_i\setminus A_i^\delta)}\\
& + \lambda^i(A_i^\delta) \norm{g_{i+1}-g^\delta_{i+1}}_{L^\infty(A_i^\delta)} \\
& + \lambda^i(A_i^\delta) \norm{f^\delta_{i+1}-f_{i+1}}_{L^\infty(A_i^\delta)} \\
& \leq  \lambda^i(A_i\setminus A_i^\delta)\norm{g_{i+1}-f_{i+1}}_{L^\infty(A_i)}\\
& + 2\lambda^i(A_i^\delta) \delta\norm{g_{i+1}-f_{i+1}}_{L^\infty(A_i)} \\
& \leq \lambda^i(A_i\setminus A_i^\delta)\norm{g_{i+1}-f_{i+1}}_{L^\infty(A_i)} \\
& +2\delta\prod_{j=1}^{i+1}\norm{g_j-f_j}_{L^\infty(A_{j-1})},
\end{align*}
using Lemma \ref{lem:measuretraceable} for the last inequality.
A discrete version of Grönwall's inequality implies that
$$\lambda^n(A\setminus A^\delta) \leq 2n\delta \prod_{j=1}^n\norm{g_j-f_j}_{L^\infty(A_{j-1})} \leq \varepsilon/\ell$$
by recursively plugging in bounds for $\lambda^i(A_i\setminus A_i^\delta)$ and taking $\delta \leq \varepsilon/2n\ell\prod_{j=1}^n\norm{g_j-f_j}_{L^\infty(A_{j-1})}$.
Setting $(A^j)' := A^\delta$ yields
$$\lambda^n(\Omega\setminus\Omega') = \sum_{j=1}^l\lambda(A^j\setminus (A^j)') \leq \ell \max_j \lambda(A^j\setminus (A^j)') \leq \ell \varepsilon/\ell = \varepsilon,$$
showing the claim.
\end{proof}

\subsection{Connection between traceable and definable sets}

The next lemma connects traceable sets with definable sets.

\begin{lemma}\label{lem:signature}
Every (bounded) $(i_1,\ldots,i_n)$-$C^m$-cell $C \subseteq \R^n$ is $m$-traceable with fundament $C_i=\pi^{\leq i}_n(C)$. Moreover, every bounded definable set $A \subseteq \R^n$ is $(m,\ell)$-traceable for some $\ell \geq 0$ and for every definable collection $\{A(a):a \in \R^i\} \subseteq \mathcal{P}(\R^{n-i})$ there is some $\ell \geq 0$ such that $A(a)$ is $(m,\ell)$-traceable for all $a \in \R^i$.
\end{lemma}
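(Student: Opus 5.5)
We prove the three assertions in order: cells are traceable by unwinding Definition \ref{def:cell}, bounded definable sets via Theorem \ref{thm:omincell}, and definable collections via Lemma \ref{lem:uniformdeffam}.

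\emph{Cells are traceable.} We argue by induction on $n$ along the inductive definition of a $C^m$-cell (Definition \ref{def:cell}). For $n=0$, $C=\R^0=\{*\}=A_0$. Suppose $C=(f,g)_D\subseteq\R^{n+1}$, where $D\subseteq\R^n$ is a bounded $C^m$-cell and $f,g:D\to\R$ are bounded, definably $C^m$, and satisfy $f<g$ or $f=g$. By the induction hypothesis, $D$ is $m$-traceable with fundament $D_i=\pi_n^{\leq i}(D)$ and some signature datum $(\{f_i,g_i\}_{i\in[n]})$.

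We set $f_{n+1}:=f$, $g_{n+1}:=g$, $C_{n+1}:=C$, and $C_i:=D_i$ for $i\leq n$. Definably $C^m$ maps are $C^m$ in the usual sense by the remark following the definition. Since $(f,g)_D$ has nonempty fiber over every point of $D$, including the case $f=g$ where the fiber is a point, we get $\pi_{n+1}(C)=D$, hence $C_i=\pi_{n+1}^{\leq i}(C)$ for $i\leq n$. All requirements of $m$-traceability are then met. Boundedness of $C$ comes from the boundedness of $f,g$ together with the inductive boundedness of $D$.

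\emph{Bounded definable sets.} Let $A$ be bounded and definable. Theorem \ref{thm:omincell} gives a finite $C^m$-cell decomposition $\mathcal{C}$ of $A$. One subtlety is that Definition \ref{def:cell} demands the boundary functions be bounded. Since $A$ is bounded, every cell $C\subseteq A$ is bounded, and the fibers $C(x)$ for $x\in\pi(C)$ lie in a bounded interval; hence the defining $f,g$, whose values lie in the closure of the fiber, are bounded. The cells in $\mathcal{C}$ are pairwise disjoint and, by the previous step, $m$-traceable. Therefore $A$ is $(m,\ell)$-traceable with $\ell:=|\mathcal{C}|$.

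\emph{Definable collections.} Write the collection as fibers of a definable $A\subseteq\R^n$. There is an obstacle here: $A$ itself need not be bounded, while the bounded cell decomposition theorem is stated for bounded sets. I would handle this in two ways.
\begin{itemize}
\item If boundedness of $A$ is implicit in the statement (the parenthetical ``(bounded)''), decompose $A$ directly by Theorem \ref{thm:omincell} into a finite cell decomposition $\mathcal{C}$.
\item Otherwise, compose with the definable homeomorphism $x\mapsto x/\sqrt{4+4x^2}$ applied to the first $i$ parameter coordinates only. This makes the parameter part bounded without changing the fibers $A(a)$ up to reindexing, as in the remark on parameter spaces. If the fibers themselves are bounded but $A$ is not, this reduction makes $A$ bounded; one can also restrict $A$ to $\R^i\times Q_{n-i}$ when the fibers lie in a box.
\end{itemize}
With a finite cell decomposition $\mathcal{C}$ of $A$ in hand, Lemma \ref{lem:uniformdeffam} shows that $\mathcal{C}(a)$ is a $C^m$-cell decomposition of $A(a)$ with at most $|\mathcal{C}|$ cells. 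Its members are $C^m$-cells by Lemma \ref{lem:defextfun}, and they are bounded as fibers of bounded cells. By the first step each member is $m$-traceable, so $\ell:=|\mathcal{C}|$ works uniformly in $a$.

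The main obstacle is this bookkeeping around boundedness and the reduction to bounded $A$. Everything else is a direct unwinding of the definitions.
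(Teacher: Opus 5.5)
Your proof is correct and follows the paper's route. You use induction along Definition \ref{def:cell} for cells, Theorem \ref{thm:omincell} for bounded definable sets, and Lemma \ref{lem:uniformdeffam} for the uniform bound over fibers. The paper tacitly takes $A$ bounded, exactly as in your first bullet; the literal statement would fail for unbounded fibers anyway, since traceable sets are bounded.

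One correction concerns your optional second bullet. Reparametrizing only the first $i$ coordinates produces a bounded set only when the fibers are \emph{uniformly} bounded. For instance, $\{(a,y)\in\R^2: 0<y<|a|+1\}$ has bounded fibers but remains unbounded after that reparametrization. So only your box-restriction variant is valid there. The non-uniform case would need a different argument, such as the general cell decomposition theorem allowing $\pm\infty$ boundaries, but it is not needed for the statement as intended.
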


\begin{proof}
By induction on $n$. For $n=0$ and $n=1$, this is clear.
Let $C$ be an $(i_1,\ldots,i_n)$-$C^m$-cell and assume bounded, definably $C^m$ functions $f,g:C\to\R$ with either $f=g$ or $f<g$ are given. Then $f$ and $g$ are in particular $C^m$. Inductively, $C$ is $m$-traceable with signature datum $(\{f_i,g_i\}_{i\in[n]})$ and fundament $C_0,\ldots,C_n$. Then $(f,g)_C$ has signature datum  $(\{f_i,g_i\}_{i\in[n]},\{f,g\})$ and fundament $C_0,\ldots,C_n,(f,g)_C$.
By Theorem \ref{thm:omincell}, every definable set is the disjoint union of finitely many $C^m$-cells. 
Lemma \ref{lem:uniformdeffam} yields that the number of $C^m$-cells needed to decompose fibers of definable sets is uniformly bounded.
\end{proof}

The following corollary collects the properties of the rectangular shape of $C^m$-cells that were derived for $m$-traceable sets in Section \ref{sec:prep}.

\begin{corollary}\label{cor:indcell}
Let $C \subseteq \R^n$ be a $C^m$-cell viewed as an $m$-traceable set with fundament $C_0,\ldots,C_n$ and signature datum $(\{f_1,g_1\}, \ldots, \{f_n,g_n\})$. Then for each $i<n$,
\begin{enumerate}[label=(\roman*)]
    \item $C_i$ is a $C^m$-cell with fundament $C_0,\ldots,C_i$ and signature datum $(\{f_1,g_1\}, \ldots, \{f_i,g_i\})$,
    \item $C_j = \pi_i^{\leq j}(C_i)$ and $C_j(x)=\pi_i^{\leq j}(C_i(x))$ for $j < i$,
    \item For $x \in C_i$, the fiber $C(x)$ above $x$ is a $C^m$-cell with signature datum $$(\{f_{i+1}(x),g_{i+1}(x)\}, \ldots, \{f_n(x),g_n(x)\}),$$
\end{enumerate}
Further, suppose that $C$ is $1$-extendable. Then
\begin{enumerate}[label=(\roman*), start=4]
    \item The closure of $C$ can be described by
    \begin{align*}
        \cl(C) =  \{ & (x_1,\ldots,x_n)\in\R^n: \underline{f}_1 \leq x_1 \leq \overline{g}_1, \ldots, \\ &\underline{f}_n(x_1,\ldots,x_{n-1}) \leq x_n  \leq \overline{g}_n(x_1,\ldots,x_{n-1})\},
    \end{align*}
    \item The frontier of $C$ is
    \begin{align*}
        \fr(C)  \subseteq & \{\underline{f}_1 = x_1 \leq \overline{g}_1, \ldots, \underline{f}_n(x_1,\ldots,x_{n-1})\leq x_n \leq \overline{g}_n(x_1,\ldots,x_{n-1})\} \\
        &  \cup \{\underline{f}_1 \leq x_1 = \overline{g}_1, \ldots, \underline{f}_n(x_1,\ldots,x_{n-1})\leq x_n \leq \overline{g}_n(x_1,\ldots,x_{n-1})\} \\
        & \cup \ldots \\
        & \cup \{\underline{f}_1 \leq x_1 \leq \overline{g}_1, \ldots, \underline{f}_n(x_1,\ldots,x_{n-1}) = x_n \leq \overline{g}_n(x_1,\ldots,x_{n-1})\} \\
        & \cup \{\underline{f}_1 \leq x_1 \leq \overline{g}_1, \ldots, \underline{f}_n(x_1,\ldots,x_{n-1})\leq x_n = \overline{g}_n(x_1,\ldots,x_{n-1})\}
    \end{align*} where equality holds if $f_i<g_i$ for all $i\in[n]$.
    \item If $C \subseteq Q_n$, then $C \in \mathcal{K}_{m,n}^B$ for each $B>B_C^m$.
\end{enumerate}
\end{corollary}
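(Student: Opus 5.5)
The plan is to reduce each item of Corollary \ref{cor:indcell} to a result already proved for general $m$-traceable sets, using Lemma \ref{lem:signature} to view $C$ as an $m$-traceable set with fundament $C_i=\pi_n^{\leq i}(C)$. The one new ingredient is that the fundament sets and fibers of $C$ are again $C^m$-cells, not merely $m$-traceable sets. That part follows by unwinding the inductive Definition \ref{def:cell} together with Lemma \ref{lem:defextfun}.

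For (i), I argue by downward induction along the construction of $C$. By Definition \ref{def:cell}, $C=(f_n,g_n)_{C'}$ for some $C^m$-cell $C'\subseteq\R^{n-1}$ and definably $C^m$ maps $f_n,g_n$. Since $f_n\le g_n$ on $C'$, every fiber over $C'$ is nonempty, so $\pi_n(C)=C'=C_{n-1}$. Iterating shows that each $C_i$ is the $C^m$-cell built from the first $i$ steps, with the stated fundament and signature datum. Uniqueness of the fundament and signature datum, noted after Example \ref{ex:trace3}, guarantees these agree with the data of $C$.

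For (ii), $C_j=\pi_i^{\leq j}(C_i)$ follows from $\pi_n^{\leq j}=\pi_i^{\leq j}\circ\pi_n^{\leq i}$. For the fibre identity, fix $x$ in the appropriate base $\R^k$ with $k<j$; the identity is vacuous otherwise. A point $y$ lies in $\pi_i^{\leq j}(C_i(x))$ exactly when some extension of $(x,y)$ lies in $C_i$. By the iterated description of $C_i$ through $\tilde f_k,\tilde g_k$, and because the fibers are nonempty as in (i), this holds exactly when $(x,y)\in C_j$, that is, when $y\in C_j(x)$.

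For (iii), I use the displayed characterisation of $A_i(a)$ preceding Lemma \ref{lem:measuretraceable}. It shows that $C(x)$ is built from the fibre maps $f_k(x),g_k(x)$ for $k>i$. By Lemma \ref{lem:defextfun}, these maps are definably $C^m$ on the fibres $C_{k-1}(x)$. They are bounded, they satisfy $f_k(x)<g_k(x)$ or $f_k(x)=g_k(x)$, and an induction on $k$ shows each $C_k(x)$ is a $C^m$-cell. This is essentially the argument sketched in Lemma \ref{lem:uniformdeffam}.

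Items (iv) and (v) are Lemma \ref{lem:closuredescription} (its equality case, using $1$-extendability) and Corollary \ref{cor:frontier}, applied to $C$. For the equality claim in (v), I take a point $x$ in one of the $2n$ sets, say with $x_i=\underline f_i(x_1,\dots,x_{i-1})$. By (iv), $x\in\cl(C)$. Since $f_i<g_i$, $1$-extendability together with the definition of the envelopes gives $\underline f_i<\overline g_i$ on $C_{i-1}$. If $x\in C$, then $(x_1,\dots,x_{i-1})\in C_{i-1}$ and $\underline f_i=f_i$ there, which would force $x_i=f_i(x_1,\ldots,x_{i-1})$. This contradicts $f_i<x_i$, so $x\notin C$ and hence $x\in\fr(C)$.

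Item (vi) is Lemma \ref{lem:extprop}(i). Its hypothesis is that $C$ is $m$-extendable, which is implicit in the definition of $B_C^m$.

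The main obstacle is the bookkeeping in (ii) and (iii), namely the nonemptiness of fibers needed to identify projections of fibres with fibres of projections. I would handle it via the ``$\pi_i(A_i)=A_{i-1}$'' clause in the definition of traceable sets.
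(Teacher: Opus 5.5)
Your proposal is correct and follows the paper's route. The paper gives no separate proof: it presents the corollary as a collection of the traceable-set facts from Section~\ref{sec:prep}, namely the fibre and projection discussion after Example~\ref{ex:trace3}, Lemma~\ref{lem:closuredescription}, Corollary~\ref{cor:frontier} and Lemma~\ref{lem:extprop}, transported to cells via Lemma~\ref{lem:signature} and Lemma~\ref{lem:defextfun}, exactly as you do. Your explicit argument for the equality clause in (v), and your observation that (vi) tacitly requires $m$-extendability (so that $B_C^m$ is defined), fill in details the paper leaves unstated.
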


Lemma \ref{lem:nottoofar} implies that sets with the extension property are 'dense' among definable subsets of $Q_n$:

\begin{corollary}
Let $A\subseteq Q_n$ be a definable set. For every $\varepsilon>0$, there is some definable set $A'\subseteq A$ such that $A'\in\mathcal{K}_{m,n}^{B,\ell}$ for some $B>0$, $\ell \in \N$ with $$\lambda^n(A \setminus A') \leq \varepsilon.$$
\end{corollary}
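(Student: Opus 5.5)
The plan is to combine the cell decomposition theorem with the shrinking argument of Lemma \ref{lem:nottoofar}, and then check that the shrunken cells are again definable. First, since $A\subseteq Q_n$ is bounded and definable, Theorem \ref{thm:omincell} yields a $C^m$-cell decomposition $\mathcal{C}=\{C^1,\ldots,C^\ell\}$ of $A$. By Lemma \ref{lem:signature}, each $C^j$ is $m$-traceable with fundament $C^j_i=\pi_n^{\leq i}(C^j)$ and with signature datum given by bounded, definably $C^m$ maps $f_i^j,g_i^j$. Hence $A$ is $(m,\ell)$-traceable with $\ell=|\mathcal{C}|$.

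Next, I would apply Lemma \ref{lem:nottoofar} with this particular decomposition, following its proof rather than only its statement. For a cell with some $f_i^j=g_i^j$ (measure zero by Lemma \ref{lem:measuretraceable}), the proof replaces the cell by a singleton $\{x\}\subseteq C^j$. A singleton is definable and lies in $\mathcal{K}_{m,n}^{B}$ by Example \ref{ex:trace1}, since constant extensions have $W^{m,\infty}$ norm at most $1/2$. For a full-dimensional cell, the proof replaces $f_i^j,g_i^j$ by the convex combinations $f_i^\delta=(1-\delta)f_i+\delta g_i$ and $g_i^\delta=\delta f_i+(1-\delta)g_i$, with $\delta$ rational and sufficiently small. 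These maps are definable, because they are obtained from definable maps using $+$ and multiplication by constants, which is allowed by axiom (v). The recursively defined sets $A_i^\delta=(f_i^\delta,g_i^\delta)_{A_{i-1}^\delta}$ are therefore definable as well, since each is cut out from definable sets by first-order conditions. So each $(C^j)'$ is a definable subset of $C^j$.

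Then set $A'=\bigcup_j (C^j)'$. This is a finite union of definable sets, hence definable, and it is a disjoint union because the $C^j$ are disjoint. Lemma \ref{lem:nottoofar} supplies a $B>0$ such that each $(C^j)'\in\mathcal{K}_{m,n}^{B}$, and gives $\lambda^n(A\setminus A')\le\varepsilon$. Consequently $A'\in\mathcal{K}_{m,n}^{B,\ell}$.

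The main obstacle I expect is the extension property for the shrunken cells. The proof of Lemma \ref{lem:nottoofar} uses that $\cl(A_i^\delta)$ is a compact subset of $A_i$, so that $f_i^\delta$ is smooth near $\cl(A_{i-1}^\delta)$, together with Whitney extension. I would check this compactness inductively on $i$. The inclusion $\cl(A_1^\delta)\subseteq A_1$ is immediate. For the inductive step, a limit point of $A_{i+1}^\delta$ projects into $\cl(A_i^\delta)\subseteq A_i$, where $f_{i+1}$ and $g_{i+1}$ are continuous and satisfy $f_{i+1}<g_{i+1}$. The $(i+1)$-st coordinate is then squeezed between $f^\delta_{i+1}$ and $g^\delta_{i+1}$, both of which lie strictly inside $(f_{i+1},g_{i+1})$, so the limit point lies in $A_{i+1}$. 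Definability is not needed for the extensions $F_i,G_i$ themselves, since the classes $\mathcal{K}$ only ask for $C^m$ extensions.
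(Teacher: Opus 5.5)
Your proposal is correct and follows the paper's own route: Lemma \ref{lem:signature}, via the cell decomposition, makes $A$ $(m,\ell)$-traceable, and then you run the construction inside the proof of Lemma \ref{lem:nottoofar} (singletons for degenerate cells, the convex-combination shrinking $f_i^\delta, g_i^\delta$ otherwise) and observe that it stays definable. You make explicit what the paper leaves implicit, namely the definability of $f_i^\delta$, $g_i^\delta$ and $A_i^\delta$, and the inductive check that $\cl(A_{i-1}^\delta)$ is a compact subset of $A_{i-1}$; note that taking $\delta$ rational is unnecessary, since every real constant is already definable by axiom (iii).
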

\begin{proof}
By Lemma \ref{lem:signature}, every definable set is $(m,\ell)$-traceable for some $\ell \in \N$. Applying Lemma \ref{lem:nottoofar} and following the definable construction of $A'$ in its proof yields the claim.
\end{proof}

Call a definable set $\Omega\subseteq Q_n$ \emph{$m$-admissible} if $\Omega$ has a $C^m$-cell decomposition $\mathcal{C} \subseteq \mathcal{K}_{m,n}$ viewing each $C \in \mathcal{C}$ as an $m$-traceable set.

\emph{Assume that $\Omega$ is $(m,\ell)$-traceable and $m$-admissible}. 
By Lemma \ref{lem:extprop}, $\Omega\in\mathcal{K}_{m,n}^{B,\ell}$ for $B>B_\Omega^m$ viewing $\Omega$ as an $(m,\ell)$-traceable set.
The next corollary ensures that all fibers of $\Omega$ have the $(m,B)$-extension property, giving a useful generation procedure of subclasses of $\mathcal{K}_{m,n-i}^{B,\ell}$ via definable collections.

\begin{lemma}\label{lem:extensionpropdeffam}
Let $i<n$ and $\mathcal{A}_{i,D}$  be the definable collection coming from $\Omega$ with parameter space $D = Q_i$ and $B > B_\Omega^m$. Then $\Omega(a) \in \mathcal{K}_{m,n-i}^{B,\ell}$ for all $a \in D$.
\end{lemma}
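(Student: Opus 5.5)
The plan is to reduce the statement to Lemma \ref{lem:extprop}(ii) followed by Lemma \ref{lem:traceablefibers}. The definable structure of $\Omega$ plays almost no role in the argument: it is only needed to make sense of $\mathcal{A}_{i,D}$ as a definable collection.

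First, since $\Omega$ is $(m,\ell)$-traceable and $m$-admissible, and $B > B_\Omega^m$, Lemma \ref{lem:extprop}(ii) gives $\Omega \in \mathcal{K}_{m,n}^{B,\ell}$. Concretely, there are pairwise disjoint $A^1,\ldots,A^l \in \mathcal{K}_{m,n}^B$ with $l \leq \ell$ and $\Omega = A^1\cup\cdots\cup A^l$. The only point to check is that $B_\Omega^m$ is a finite infimum over a nonempty set of decompositions. This follows from $m$-admissibility: the cells of the given $C^m$-cell decomposition are $m$-extendable and lie in $\bigcup_{B>0}\mathcal{K}_{m,n}^B$ by Corollary \ref{cor:indcell}(vi). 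We also need the number of cells to be at most $\ell$, which we take to be part of viewing $\Omega$ as $(m,\ell)$-traceable via that decomposition.

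Second, write $n = i + (n-i)$ and apply Lemma \ref{lem:traceablefibers} with $k = i$ and with $n-i$ in place of $n$. For each $a \in Q_i = D$ this yields $\Omega(a) \in \mathcal{K}_{m,n-i}^{B,\ell}$. Inside that lemma, the fibers $A^j(a)$ remain pairwise disjoint, empty fibers are simply dropped so that at most $\ell$ pieces remain, and the extensions $F^j_r(a)$, $G^j_r(a)$ keep $W^{m,\infty}$-norm at most $B$ on $Q_{r-1-i}$ by Lemma \ref{lem:fibermaps}.

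The main obstacle I expect is the edge case of empty or degenerate fibers in the second step. The first is when $a \notin A^j_i$ for some $j$. Then $A^j(a)$ is empty, and it must be discarded rather than treated as a traceable set; the empty set is trivially a union of zero pieces. The second is when $\Omega(a)$ itself is empty; we handle it by allowing $l=0$ in the definition, which is legitimate since $\ell \in \N$ and $l \leq \ell$ includes $l = 0$. With these conventions the argument is just the composition of the two cited lemmas.
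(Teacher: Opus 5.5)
Your proposal is correct and follows the paper's route: the paper's proof is exactly Lemma \ref{lem:extprop}(ii), giving $\Omega\in\mathcal{K}_{m,n}^{B,\ell}$, followed by the argument of Lemma \ref{lem:traceablefibers} with $k=i$. You are right to drop empty fibers $A^j(a)$, since they are not $m$-traceable, a point the paper glosses over; your finiteness check on $B_\Omega^m$ is unnecessary, because the existence of a real $B>B_\Omega^m$ already means the infimum is taken over a nonempty set of decompositions into at most $\ell$ pieces.
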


\begin{proof}
Analogous to the proof of Lemma \ref{lem:traceablefibers}, using that $\Omega\in\mathcal{K}_{m,n}^{B,\ell}$ for $B>B_\Omega^m$.
\end{proof}

\section{ReLU neural network approximation of traceable sets}\label{sec:approx}

Most of this section is based on \cite{petersen2018optimal} and \cite{petersen2025mathematicaltheorydeeplearning}. We first adapt the function classes from \cite{petersen2018optimal} to our setting and ensure they can still be approximated by controllable classes of ReLU neural networks. With these function classes, we build approximating ReLU neural networks for characteristic functions of traceable sets, keeping track of the architecture and the approximation error. We let $H_i: \R^n \to \R$, $H_i := \chi_{\R^{i-1} \times [0,\infty) \times \R^{n-i}}$ for $i\in[n]$.

\begin{definition}
For $B>0$, define the set of functions
$$\mathcal{F}_{m,n}^B:=\{ f \in C^m(Q_n):\norm{f}_{W^{m,\infty}(Q_n)} \leq B\}.$$
Further, for $n \geq 2$, let the \emph{horizon functions} with respect to $\mathcal{F}_{m,n}^B$ be given by
$$\mathcal{HF}_{m,n}^B:=\mathcal{HF}_{m,n}^{B,+} \cup \mathcal{HF}_{m,n}^{B,-}$$
where $$\mathcal{HF}_{m,n}^{B,+}:=\bigcup_{i=1}^n\{f\in L^{\infty}(Q_n):f(x)=H_i(x-e_i\gamma(x^{<i})):\gamma \in \mathcal{F}_{m,i-1}^B\}$$
and
$$\mathcal{HF}_{m,n}^{B,-}:=\bigcup_{i=1}^n\{f\in L^{\infty}(Q_n):f(x)=H_i(e_i\gamma(x^{<i})-x):\gamma \in \mathcal{F}_{m,i-1}^B\}$$
abbreviating $x^{< i} := (x_1,\ldots,x_{i-1})$.
\end{definition}

For every $f \in \mathcal{F}_{m,n}^B$, we have that $-f \in \mathcal{F}_{m,n}^B$ and $\norm{\partial^\alpha f}_{L^\infty(Q_n)} \leq B$ for $\alpha \in \N^n,\abs{\alpha} \leq m$. 
So for $\abs{\alpha} = m-1$ we get
$$\abs{\partial^{\alpha} f(x) - \partial^{\alpha} f(y)} \leq \norm{\sqrt{\sum_{i=1}^n (\partial^{\alpha+e_i} f)^2}}_{L^\infty(Q_n)}\norm{x-y} \leq \sqrt{n}B\norm{x-y}.$$
Hence $\mathcal{F}_{m,n}^B \subseteq \mathcal{F}_{\beta,n,\sqrt{n}B}$ for $\beta = (m-1)+\sigma$ with $\sigma = 1$, where $\mathcal{F}_{\beta,n,\sqrt{n}B}$ is defined in \cite{petersen2018optimal}, (3.1).
Thus all approximation results concerning $\mathcal{F}_{\beta,n,\sqrt{n}B}$ apply to $\mathcal{F}_{m,n}^B$. 
Identifying every $f \in \mathcal{F}_{m,i-1}^B$ with a function $\gamma_f:Q_{n-1} \to \R$, $(x_1,\ldots,x_{n-1}) \mapsto f(x_1,\ldots,x_{i-1})$ 
yields the inclusion $\mathcal{F}_{m,i-1}^B \subseteq \mathcal{F}_{m,n-1}^B$ for each $i\in[n]$. This follows because $\norm{\partial^\alpha \gamma_f}_{L^\infty(Q_{n-1})} = 0$ if $\alpha_j > 0$ for some $j \geq i$.

Using $\mathcal{F}_{m,i-1}^B \subseteq \mathcal{F}_{m,n-1}^B \subseteq \mathcal{F}_{\beta,n-1,\sqrt{n}B}$ for $i \leq n$ yields that $\mathcal{HF}_{m,n}^{B,+} \subseteq \mathcal{HF}_{\beta,n,\sqrt{n}B}$ for $\beta = (m-1)+\sigma$ and $\sigma = 1$, where $\mathcal{HF}_{\beta,n,\sqrt{n}B}$ is defined in \cite{petersen2018optimal}, (3.3). 
But $\mathcal{HF}_{\beta,n,\sqrt{n}B}$ does not include $\mathcal{HF}_{m,n}^{B,-}$ since the group $\Pi(n,\R)$ of permutation matrices of $\R^n$ does not include mirroring of coordinates.
However, for every $i\in[n]$ and $\gamma \in \mathcal{F}_{m,i-1}^B$, there holds 
$$H_i(e_i\gamma(x^{<i})-x) - (1 - H_i(x-e_i\gamma(x^{<i}))) = \chi_{\{x_i=\gamma(x^{<i})\}}(x),$$
so $x \mapsto H_i(e_i\gamma(x^{<i})-x)$ and $x \mapsto 1 - H_i(x-e_i\gamma(x^{<i}))$ agree in $L^p$ because $\{x \in \R^n: x_i=\gamma(x^{<i})\}$ is a null-set.
The $L^p$-approximation capabilities of  $\mathcal{HF}_{m,n}^{B,+}$ by ReLU neural networks directly translate to $1 - \mathcal{HF}_{m,n}^{B,+}$ and hence to $\mathcal{HF}_{m,n}^{B,-}$, as for a ReLU neural network $\Phi$, the function $1-\Phi$ is again a ReLU neural network with the same architecture as $\Phi$. Also, $\depth(1-\Phi) = \depth(\Phi)$, $\size(1-\Phi) \leq \size(\Phi)+n$, and $\norm{1-\Phi}_{\max} \leq \norm{\Phi}_{\max}+1$.
Indeed, the ReLU neural network $1-\Phi$ emerges from $\Phi$ by multiplying all weights of the output layer with $-1$ and afterwards adding $+1$ to the entries of the output layer bias. So the bound on the absolute value of the weights of $\Phi$ grows at most by $1$ and the number of non-zero weights of $\Phi$ at most by $n$. Furthermore, all dependencies on $B$, which only affect constants, include dependencies on $n$, so Theorem \ref{thm:optimalapprox_3.4} also holds for bounds $B$ instead of $\sqrt{n}B$ in the function classes discussed.
Hence \cite{petersen2018optimal}, Theorem 3.4, also applies to $\mathcal{HF}_{m,n}^B$ instead of $\mathcal{HF}_{\beta,n,B}$ and we get the following adapted version with different constants, absorbing the changes stated above. 

\begin{theorem}\label{thm:optimalapprox_3.4}
For $B>0$, $n \geq 2$, there exist constants $c=c(n,m,B,p)>0$ and $s=s(n,m,B,p)>0$ such that for every $0<\varepsilon<1/2$ and every $h \in \mathcal{HF}_{m,n}^B$, there is a ReLU neural network $\Phi_\varepsilon^h: \R^n \to [0,1]$ with 
\begin{itemize}
    \item $\norm{\Phi_\varepsilon^h}_{\max} \leq \varepsilon^{-s}$,
    \item $\depth(\Phi_\varepsilon^h) \leq (2+\lceil \log_2(m)\rceil)(14+2m/n)$,
    \item $\size(\Phi_\varepsilon^h) \leq c\cdot \varepsilon^{-p(n-1)/m}$,
\end{itemize}
such that
$$\norm{\Phi_\varepsilon^h-h}_{L^p(Q_n)} < \varepsilon.$$
\end{theorem}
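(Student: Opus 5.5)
The plan is to reduce Theorem \ref{thm:optimalapprox_3.4} to \cite{petersen2018optimal}, Theorem 3.4, via the inclusions established just above, and to treat the ``$-$'' branch of the horizon class by complementation.

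\emph{The ``$+$'' case.} Let $h\in\mathcal{HF}_{m,n}^{B,+}$, so $h(x)=H_i(x-e_i\gamma(x^{<i}))$ with $\gamma\in\mathcal{F}_{m,i-1}^B$. Lifting $\gamma$ to $\gamma_f:Q_{n-1}\to\R$, which does not depend on the last $n-i$ coordinates, gives $\gamma_f\in\mathcal{F}_{m,n-1}^B\subseteq\mathcal{F}_{\beta,n-1,\sqrt{n}B}$ with $\beta=m$ (that is, $(m-1)+1$). Hence $h$ is a horizon function in the sense of \cite{petersen2018optimal}, (3.3): it is the composition of a Heaviside-type function in the last variable with the coordinate permutation that moves $x_i$ into the last slot. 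Theorem 3.4 there then supplies a ReLU network $\Phi$ with values in $[0,1]$ and
\begin{itemize}
\item $\norm{\Phi-h}_{L^p(Q_n)}<\varepsilon$,
\item $\size(\Phi)\lesssim\varepsilon^{-p(n-1)/\beta}=\varepsilon^{-p(n-1)/m}$,
\item $\depth(\Phi)\le(2+\lceil\log_2\beta\rceil)(14+2\beta/n)$,
\item $\norm{\Phi}_{\max}\le\varepsilon^{-s}$.
\end{itemize}
The constants depend on $(n,m,\sqrt{n}B,p)$, hence only on $(n,m,B,p)$.

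\emph{The ``$-$'' case.} Let $h(x)=H_i(e_i\gamma(x^{<i})-x)$. We use the displayed identity that $h$ differs from $1-H_i(x-e_i\gamma(x^{<i}))$ only on the graph $\{x_i=\gamma(x^{<i})\}$. This graph is a Lebesgue null set, by Fubini, since for each fixed $x^{<i}$ its fiber is a single point. Take the network $\Phi$ from the ``$+$'' case for $h^+:=H_i(\cdot-e_i\gamma)$ and set $\Psi:=1-\Phi$. Then:
\begin{itemize}
\item $\Psi$ takes values in $[0,1]$;
\item $\norm{\Psi-h}_{L^p}=\norm{\Phi-h^+}_{L^p}<\varepsilon$;
\item $\depth(\Psi)=\depth(\Phi)$;
\item $\size(\Psi)\le\size(\Phi)+n\le(c+n)\varepsilon^{-p(n-1)/m}$, using $\varepsilon<1/2$;
\item $\norm{\Psi}_{\max}\le\varepsilon^{-s}+1\le\varepsilon^{-s'}$ for a suitable larger $s'$, again because $\varepsilon<1/2$.
\end{itemize}
Taking the maximum of the constants over the two cases and over $i\in[n]$ gives uniform constants $c$ and $s$.

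\emph{Main obstacle.} The delicate step is transferring the exact constants and the stated depth bound from \cite{petersen2018optimal}. Three points need checking:
\begin{itemize}
\item That their smoothness parameter convention (Hölder index $\beta=(m-1)+1$ with Lipschitz constant $\sqrt{n}B$) yields the depth expression $(2+\lceil\log_2 m\rceil)(14+2m/n)$ exactly as stated. If their bound is in terms of $\lceil\beta\rceil$, then setting $\beta=m$ matches.
\item That their weight bound has the form $\varepsilon^{-s}$ for $0<\varepsilon<1/2$.
\item That their networks are clipped to the range $[0,1]$. If they are not, I would append one clipping layer $x\mapsto\rho(x)-\rho(x-1)$. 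This does not increase the $L^p$ error because $h\in\{0,1\}$, but it adds one to the depth, so the depth bound would need a small margin.
\end{itemize}
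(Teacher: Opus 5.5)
Your proposal is correct and follows essentially the same route as the paper: you embed $\mathcal{HF}_{m,n}^{B,+}$ into the class $\mathcal{HF}_{\beta,n,\sqrt{n}B}$ of \cite{petersen2018optimal} with $\beta=(m-1)+1=m$ via the lifting $\gamma\mapsto\gamma_f$ and invoke their Theorem 3.4, and you handle $\mathcal{HF}_{m,n}^{B,-}$ through the a.e.\ identity with $1-H_i(x-e_i\gamma(x^{<i}))$ and the network $1-\Phi$, exactly as the paper does. Your clipping fallback is not needed, because the paper takes both the $[0,1]$-valuedness and the depth formula directly from the cited theorem, so the stated depth bound carries over unchanged.
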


\subsection{Approximating traceable sets by ReLU neural networks}\label{sec:approxtrace}

First, we consider horizon functions of traceable sets. Let $A \in \mathcal{K}_{m,n}^B$ with signature datum $(\{f_i,g_i\}_{i\in[n]})$. For each $i\in[n]$, take $C^m$ maps $F_i,G_i:Q_{i-1} \to \R$ such that $F_i|_{A_{i-1}} = f_i$ and $G_i|_{A_{i-1}} = g_i$ with $\norm{F_i}_{W^{m,\infty}(Q_{i-1})},\norm{G_i}_{W^{m,\infty}(Q_{i-1})} \leq B$ so that $F_i,G_i \in \mathcal{F}_{m,i-1}^B$. 
Then the \emph{horizon functions $(\{\mathrm{F}_i,\mathrm{G}_i\}_{i\in[n]})$ of $A$ with respect to $(\{F_i,G_i\}_{i\in[n]})$} are defined as $\mathrm{F}_i,\mathrm{G}_i:Q_n\to\{0,1 \}$ where
$$\mathrm{F}_i(x) := H_i(x_1,\ldots,x_{i-1},x_i-F_i(x_1,\ldots,x_{i-1}),x_{i+1},\ldots, x_n)$$
and 
$$\mathrm{G}_i(x) := H_i(x_1,\ldots,x_{i-1},G_i(x_1,\ldots,x_{i-1})-x_i,x_{i+1},\ldots, x_n)$$
for $i\in[n]$. We have that $\mathrm{F}_i \in \mathcal{HF}_{m,n}^{B,+}$ and $\mathrm{G}_i \in \mathcal{HF}_{m,n}^{B,-}$ for each $i\in[n]$.

Note that $\cl(A) = \bigcap_{i=1}^n \left(\mathrm{F}_i^{-1}(1) \cap \mathrm{G}_i^{-1}(1)\right)$ by Lemma \ref{lem:closuredescription} and hence $\chi_{\cl(A)} = \min_{i\in[n]}\{\mathrm{F}_i,\mathrm{G}_i\}$, independent of the choice of $F_i$ and $G_i$. Corollary \ref{cor:frontier} implies that $\norm{\chi_{\cl(A)}-\chi_A}_{L^p(Q_n)} = \norm{\chi_{\fr(A)}}_{L^p(Q_n)} = 0$. 

Further, let  $\Omega \in \mathcal{K}_{m,n}^{B,\ell}$. Then $\Omega = A^1 \cup \cdots \cup A^l$ for some $l\leq \ell$, where $A^j \in \mathcal{K}_{m,n}^B$ has signature datum $(\{f^j_i,g^j_i\}_{i\in[n]})$. For $C^m$ maps $F^j_i,G^j_i:Q_{i-1} \to \R$ such that $F^j_i|_{A^j_{i-1}} = f^j_i$ and $G^j_i|_{A^j_{i-1}} = g^j_i$ with $\norm{F^j_i}_{W^{m,\infty}(Q_{i-1})},\norm{G^j_i}_{W^{m,\infty}(Q_{i-1})} \leq B$, we get horizon functions $(\{\mathrm{F}^j_i,\mathrm{G}^j_i\}_{i\in[n]})$ of $A^j$ with respect to $(\{F^j_i,G^j_i\}_{i\in[n]})$ and we can express 
$$\cl(\Omega) = \cl(A^1) \cup \cdots \cup \cl(A^l) = \bigcup_{j=1}^l\bigcap_{i=1}^n \left((\mathrm{F}^j_i)^{-1}(1) \cap (\mathrm{G}^j_i)^{-1}(1)\right),$$
so $\chi_{\cl(\Omega)}= \max_{j\in[l]} \chi_{\cl(A^j)} = \max_{j\in[l]}\min_{i\in[n]}\{\mathrm{F}^j_i,\mathrm{G}^j_i\}$. Here, $\max_{j \in \emptyset} \chi_{A^j} := 0$. We again have 
$$\norm{\chi_{\cl(\Omega)}-\chi_\Omega}_{L^p(Q_n)} \leq \sum_{j=1}^l \norm{\chi_{\cl(A^j)}-\chi_{A^j}}_{L^p(Q_n)} = 0.$$
The next part is devoted to the construction of ReLU neural networks approximating the characteristic functions of elements of $\mathcal{K}_{m,n}^B$ and $\mathcal{K}_{m,n}^{B,\ell}$ in the $L^p$ norm. First, we approximate horizon functions of traceable sets. By Theorem \ref{thm:optimalapprox_3.4} and the discussion above, there are constants $c=c(n,m,B,p)>0$ and $s=s(n,m,B,p)>0$ such that every $A \in \mathcal{K}_{m,n}^B$ has horizon functions $(\{\mathrm{F}_i,\mathrm{G}_i\}_{i\in[n]})$ so that there are ReLU neural networks $\Phi_\varepsilon^{\mathrm{F}_i}, \Phi_\varepsilon^{\mathrm{G}_i}:Q_n\to[0,1]$ satisfying
\begin{itemize}
    \item  $\norm{\Phi_\varepsilon^{\mathrm{F}_i}}_{\max}, \norm{\Phi_\varepsilon^{\mathrm{G}_i}}_{\max} \leq \varepsilon^{-s}$,
    \item $\size{(\Phi_\varepsilon^{\mathrm{F}_i})}, \size(\Phi_\varepsilon^{\mathrm{G}_i}) \leq c\cdot \varepsilon^{-p(n-1)/m}$,
    \item $\depth{(\Phi_\varepsilon^{\mathrm{F}_i}}), \depth(\Phi_\varepsilon^{\mathrm{G}_i}) \leq (2+\lceil \log_2(m)\rceil)(14+2m/n)$,
    \item $\norm{\Phi_\varepsilon^{\mathrm{F}_i} - \mathrm{F}_i}_{L^p(Q_n)},\norm{\Phi_\varepsilon^{\mathrm{G}_i} - \mathrm{G}_i}_{L^p(Q_n)} < \varepsilon$.
\end{itemize}
We demonstrate the ReLU neural network template for a fixed $A \in \mathcal{K}_{m,n}^B$, but any choice of $A$ yields horizon functions of $A$ satisfying these conditions. Fix horizon functions $\mathrm{F}_i$ and $\mathrm{G}_i$  of $A$ as well as ReLU neural networks $\Phi_\varepsilon^{\mathrm{F}_i}$ and $\Phi_\varepsilon^{\mathrm{G}_i}$ with the properties above.
In order to approximate $\chi_{\cl(A)}=\min_{i\in[n]}\{\mathrm{F}_i,\mathrm{G}_i\}$ by ReLU neural networks, we need a few ReLU neural network building lemmas from \cite{petersen2025mathematicaltheorydeeplearning}.
The first result concerns parallelization with shared input \cite{petersen2025mathematicaltheorydeeplearning}, Section 5.1.3.

\begin{lemma}\label{lem:5.3}
Let $\Phi_1, \ldots ,\Phi_\ell: \R^n \to \R$, $\ell \in \N^*$, be ReLU neural networks. Then there is a ReLU neural network $(\Phi_1,\ldots,\Phi_\ell): \R^n \to \R^\ell$ satisfying
$$(\Phi_1,\ldots,\Phi_\ell)(x) =(\Phi_1(x),\ldots,\Phi_\ell(x))$$
such that
\begin{itemize}
    \item $\depth((\Phi_1,\ldots,\Phi_\ell)) = \max_{i\in[\ell]} \depth(\Phi_i)$,
    \item $\width((\Phi_1,\ldots,\Phi_\ell)) \leq 2\ell \max_{i\in[\ell]} \width(\Phi_i)$,
    \item $\size((\Phi_1,\ldots,\Phi_\ell)) \leq 2\ell\max_{i\in[\ell]}\size(\Phi_i)+2\ell\max_{i\in[\ell]}\depth(\Phi_i)$.
\end{itemize}
\end{lemma}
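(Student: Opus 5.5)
The plan is to build $(\Phi_1,\ldots,\Phi_\ell)$ in two steps. First we bring all networks to the common depth $L:=\max_{i\in[\ell]}\depth(\Phi_i)$. Then we stack them block-diagonally, with the input shared in the first layer.

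\emph{Depth equalization.} Suppose $\depth(\Phi_i)<L$. We extend $\Phi_i$ by appending an identity network $\R\to\R$ of depth $L-\depth(\Phi_i)$, using $x=\rho(x)-\rho(-x)$. Its hidden layers have width $2$: the first of them computes $(\rho(y),\rho(-y))$ from the scalar output $y$ of $\Phi_i$, the intermediate ones are identity $2\times2$ blocks, and the last affine map is $(1,-1)$. We merge the output affine map of $\Phi_i$ with the first affine map of the identity network. This leaves the realization unchanged and gives depth exactly $L$. The width becomes at most $\max\{\width(\Phi_i),2\}\le 2\width(\Phi_i)$. The size grows by at most about the number of nonzero weights in the last layer of $\Phi_i$ (which gets doubled, contributing at most $\size(\Phi_i)$), plus $2$ weights per added layer, plus $2$ for the output. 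Altogether we expect $\size(\tilde\Phi_i)\le 2\size(\Phi_i)+2L$. If $\depth(\Phi_i)=L$ already, we set $\tilde\Phi_i:=\Phi_i$.

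\emph{Parallelization.} Write $\tilde\Phi_i=((W_i^{(0)},b_i^{(0)}),\ldots,(W_i^{(L)},b_i^{(L)}))$. The parallel network has:
\[
W^{(0)}:=\begin{pmatrix}W_1^{(0)}\\ \vdots\\ W_\ell^{(0)}\end{pmatrix},\qquad W^{(k)}:=\operatorname{diag}(W_1^{(k)},\ldots,W_\ell^{(k)})\ (k\ge1),
\]
with the biases stacked. Since $\rho$ acts componentwise, induction over the layers shows that the $k$-th hidden state is the concatenation of the hidden states of the $\tilde\Phi_i$. Hence the output is $(\Phi_1(x),\ldots,\Phi_\ell(x))$.

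\emph{Bounds.} The depth is $L$. Each hidden width is a sum of $\ell$ terms, each at most $2\max_i\width(\Phi_i)$, giving width at most $2\ell\max_i\width(\Phi_i)$. Block stacking creates no new nonzero entries, so the size is $\sum_i\size(\tilde\Phi_i)\le 2\ell\max_i\size(\Phi_i)+2\ell L$.

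The only step needing care is the bookkeeping in the depth extension. The claimed constants require that the identity padding and the merge of affine maps cost no more than $\size(\Phi_i)+2L$ extra weights. If the merge turns out to double too many entries, the fallback is to append the padding with an explicit width-$2$ layer instead of merging, and to absorb the one extra layer by padding one fewer time.
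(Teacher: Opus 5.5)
Your proof is correct and is essentially the construction the paper relies on: the paper gives no proof of its own but cites Petersen--Zech, Section 5.1.3, where the networks are first brought to the common depth $\max_i\depth(\Phi_i)$ by composing (with merged affine maps) with identity networks built from $x=\rho(x)-\rho(-x)$, and then stacked block-diagonally with shared input. Your closing hedge is unnecessary: the merge only doubles the nonzero entries of the last affine layer of $\Phi_i$, so the padded network has size at most $2\size(\Phi_i)+2(L-\depth(\Phi_i))\le 2\size(\Phi_i)+2L$, exactly as your count gives.
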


Secondly, we recall the representation of maxima and minima of ReLU neural networks \cite{petersen2025mathematicaltheorydeeplearning}, Lemma 5.11.

\begin{lemma}\label{lem:5.11}
Let $n \geq 2$. Then there exist $\Phi^{\min}_n,\Phi^{\max}_n: \R^n \to \R$ with 
\begin{itemize}
    \item $\depth(\Phi^{\min}_n),\depth(\Phi^{\max}_n) \leq \lceil \log_2(n) \rceil$,
    \item $\width(\Phi^{\min}_n),\width(\Phi^{\max}_n) \leq 3n$,
    \item $\size(\Phi^{\min}_n),\size(\Phi^{\max}_n) \leq 16n$,
\end{itemize}
such that $\Phi^{\min}_n(x_1, \ldots, x_n) = \min \{ x_1, \ldots, x_n \}$ and $\Phi^{\max}_n(x_1, \ldots, x_n) = \max \{ x_1, \ldots, x_n \}$.
\end{lemma}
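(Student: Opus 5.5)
We prove the statement by a binary tournament (reduction tree). The basic identity is
$$\max\{a,b\} = \rho(a-b) + \rho(b) - \rho(-b), \qquad \min\{a,b\} = \rho(b)-\rho(-b)-\rho(b-a).$$
It expresses the maximum or minimum of two numbers as a one-hidden-layer ReLU network with $3$ neurons, no biases, $4$ nonzero inner weights and $3$ output weights. We also pass an unpaired coordinate through unchanged via $x=\rho(x)-\rho(-x)$, which uses $2$ neurons. We treat only $\Phi^{\max}_n$. The network $\Phi^{\min}_n$ comes from the same construction with the second identity, or from $\min\{x_1,\ldots,x_n\} = -\max\{-x_1,\ldots,-x_n\}$ by negating the input and output weights. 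Negating weights does not change depth, width or the number of nonzero weights.

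\emph{Construction.} Set $n_0:=n$ and $n_{k+1}:=\lceil n_k/2\rceil$. Then $n_K=1$ exactly for $K=\lceil\log_2(n)\rceil$. In stage $k$ we pair the $n_k$ current values into $\lfloor n_k/2\rfloor$ pairs and take the maximum of each pair. If $n_k$ is odd, the leftover value is carried along by the identity trick. This gives a one-hidden-layer network $\Psi_k:\R^{n_k}\to\R^{n_{k+1}}$ with at most $3\lceil n_k/2\rceil$ hidden neurons. The function we want is $\Psi_{K-1}\circ\cdots\circ\Psi_0$. We realize it as a single ReLU network of depth $K$ by merging the affine output map of $\Psi_k$ with the affine input map of $\Psi_{k+1}$, since the composition of two affine maps is affine. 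This gives the depth bound $\lceil\log_2(n)\rceil$ directly. The width bound holds because every hidden layer has at most $3\lceil n_k/2\rceil\le 3n$ neurons.

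\emph{Size count.} This is the only step needing care. In the merged network, a neuron of hidden layer $k+1$ computes $\rho(\pm a\mp b)$ or $\rho(\pm b)$. Here $a,b$ are outputs of stage $k$, and each of them is a linear combination of at most $3$ neurons of layer $k$. So each neuron of layer $k+1$ has at most $6$ incoming nonzero weights. The first hidden layer has at most $2$ incoming weights per neuron. The output layer has at most $3$ weights. All biases vanish. Summing over the layers, the number of nonzero weights is at most
$$\sum_{k=0}^{K-1} 6\cdot 3\lceil n_k/2\rceil + 3.$$
We then bound $\sum_k \lceil n_k/2\rceil\le \sum_k (n_k/2+1)$ and use $\sum_k n_k\le 2n+K$. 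If the constant does not come out at $16$ directly, we count more finely. Paired neurons $\rho(b),\rho(-b)$ have only one input each, and a pass-through neuron feeding a pair adds only $2$ weights. We would also check the cases $n=2,3$ by hand. This bookkeeping to reach exactly $16n$, rather than some $Cn$, is the main obstacle. If it fails we would induct on $n$ with the invariant $\size\le 16n-c$ for a suitable constant $c$.
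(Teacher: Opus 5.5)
\textbf{Gap in the size bound.} Your construction is the right one. The paper gives no proof of its own and imports the lemma from \cite{petersen2025mathematicaltheorydeeplearning}, and a pairwise tournament with merged affine layers is the standard argument for it. Your depth bound, your width bound, and the passage to $\Phi^{\min}_n$ by negating the first and last weight matrices are all fine.

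The gap is the third bullet, $\size\le 16n$, which you leave open. The count you actually write down does not give it. Your sum $\sum_{k=0}^{K-1}18\lceil n_k/2\rceil+3=18\sum_{k=1}^{K}n_k+3$ equals $18n-15$ for $n=2^K$, which exceeds $16n$ once $n\ge 8$. Your follow-up estimate via $\sum_k n_k\le 2n+K$ only yields $18n+O(\log n)$. The refinement you sketch is also not correct as stated. After merging, a neuron $\rho(b)$ or $\rho(-b)$ in a hidden layer beyond the first does not have one incoming weight, because $b$ has been replaced by a combination of up to $3$ neurons of the previous layer. Only in the first hidden layer do these neurons have a single input.

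\textbf{How to close it.} Charge the worst case $6$ only to the neuron $\rho(a-b)$, and count each layer by neuron type.
\begin{itemize}
\item In the first hidden layer, a pair costs $2+1+1=4$ nonzero weights and a pass-through costs $1+1=2$. That layer therefore carries exactly $2n$ weights.
\item In the hidden layer of any stage $j\ge 1$, a pair costs at most $6+3+3=12$ and a pass-through at most $3+3=6$. That layer therefore carries at most $6n_j$ weights.
\item The output layer has $3$ weights, and all biases vanish.
\end{itemize}
Since $n\le 2^K$, you have $n_j=\lceil n/2^j\rceil\le 2^{K-j}$. Since $2^{K-1}<n$, you have $2^K\le 2n-2$. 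Together these give
\[
\size(\Phi^{\max}_n)\le 2n+6\sum_{j=1}^{K-1}2^{K-j}+3=2n+6(2^K-2)+3\le 14n-21\le 16n .
\]
So no induction with a shifted invariant is needed.
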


Lastly, we consider compositions of ReLU neural networks \cite{petersen2025mathematicaltheorydeeplearning}, Lemma 5.2.

\begin{lemma}\label{lem:5.2}
Let $\Phi_1: \R^n \to \R^\ell, \Phi_2: \R^\ell \to \R$, $\ell \in \N$, be ReLU neural networks. Then there is a ReLU neural network $\Phi_2 \bullet \Phi_1:\R^n \to \R$ such that $\Phi_2 \bullet \Phi_1(x) = \Phi_2(\Phi_1(x))$ and 
\begin{itemize}
    \item $\depth(\Phi_2 \bullet \Phi_1) = \depth(\Phi_1) + \depth(\Phi_2)+1$,
    \item $\width(\Phi_2 \bullet \Phi_1) \leq 2\max \{\width(\Phi_1),\width(\Phi_2)\}$,
    \item $\size(\Phi_2 \bullet \Phi_1) \leq 2\size(\Phi_1) + 2\size(\Phi_2)$.
\end{itemize}
\end{lemma}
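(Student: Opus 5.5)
We would prove the composition lemma by \emph{gluing} the two architectures along an extra hidden layer that realizes the identity through the ReLU identity $y=\rho(y)-\rho(-y)$, applied componentwise. Write $\Phi_1=((W^{(0)},b^{(0)}),\ldots,(W^{(L_1)},b^{(L_1)}))$ with output dimension $\ell$, and $\Phi_2=((V^{(0)},c^{(0)}),\ldots,(V^{(L_2)},c^{(L_2)}))$ with input dimension $\ell$.

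\textbf{Construction.} Keep all layers of $\Phi_1$ except the last affine map. Replace the last affine map by the pair
$$\left(\begin{pmatrix} W^{(L_1)}\\ -W^{(L_1)}\end{pmatrix},\begin{pmatrix} b^{(L_1)}\\ -b^{(L_1)}\end{pmatrix}\right),$$
followed by a ReLU. This layer produces the $2\ell$ values $\rho(\Phi_1(x))$ and $\rho(-\Phi_1(x))$. Next, use the pair
$$\bigl(V^{(0)}\,[\,I_\ell\;\; -I_\ell\,],\;c^{(0)}\bigr).$$
Since $[\,I_\ell\;\; -I_\ell\,]$ applied to the hidden vector returns exactly $\Phi_1(x)$, this layer computes $V^{(0)}\Phi_1(x)+c^{(0)}$. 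After it, keep the remaining layers $(V^{(1)},c^{(1)}),\ldots,(V^{(L_2)},c^{(L_2)})$ of $\Phi_2$ unchanged.

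\textbf{Verification.} We proceed in the following order.

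First, the realization. By the recursive definition of the realization, the output equals $\Phi_2(\Phi_1(x))$. The only step needing care is the identity $\rho(y)-\rho(-y)=y$, applied coordinatewise.

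Second, the depth. The new network has $L_1$ hidden layers from $\Phi_1$, one inserted layer, and $L_2$ hidden layers from $\Phi_2$. Hence its depth is $L_1+L_2+1$.

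Third, the width. Every hidden layer of the new network is either a hidden layer of $\Phi_1$ or $\Phi_2$, or the inserted layer of width $2\ell$. The inserted layer is the only place where the bound $2\max\{\width(\Phi_1),\width(\Phi_2)\}$ requires comment. We expect this to be the main (if mild) obstacle: the bound needs $\ell\le\max\{\width(\Phi_1),\width(\Phi_2)\}$. This holds under the convention of \cite{petersen2025mathematicaltheorydeeplearning}, where the width is taken over all layer dimensions including input and output. Under the restricted convention of this paper, we would adopt that convention for this bound, or else note that $\ell$ is at most the input dimension of $\Phi_2$ and is counted there.

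Fourth, the size. The nonzero entries of the stacked matrix and bias are exactly twice those of $(W^{(L_1)},b^{(L_1)})$. The nonzero entries of $V^{(0)}[\,I_\ell\;\; -I_\ell\,]$ are twice those of $V^{(0)}$, with bias $c^{(0)}$ unchanged. All other weights are copied verbatim. Summing these counts gives $\size(\Phi_2\bullet\Phi_1)\le 2\size(\Phi_1)+2\size(\Phi_2)$.

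As a by-product, the construction introduces no new weight magnitudes beyond those of $\Phi_1$ and $\Phi_2$, so $\norm{\Phi_2\bullet\Phi_1}_{\max}\le\max\{\norm{\Phi_1}_{\max},\norm{\Phi_2}_{\max},1\}$. This is useful later for the weight bounds in Theorem \ref{thm:mainapprox}.
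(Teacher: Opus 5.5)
Your construction (stacking $\pm(W^{(L_1)},b^{(L_1)})$, applying $\rho$, then $[V^{(0)}\;-V^{(0)}]$ with bias $c^{(0)}$ via $\rho(y)-\rho(-y)=y$) is the standard proof of the cited Lemma 5.2 of \cite{petersen2025mathematicaltheorydeeplearning}. The paper gives no proof of its own and only cites that lemma, and your realization, depth, size and weight-magnitude counts are all correct.

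Your width caveat is genuine. With the paper's convention $\width(\Phi)=\max_{i\in[L]}d_i$ (hidden layers only), the inserted layer of width $2\ell$ exceeds $2\max\{\width(\Phi_1),\width(\Phi_2)\}$ whenever $\ell$ is larger than every hidden width, so that bullet only follows once $\ell$ is counted in the width. Do not simply attribute the input/output convention to the cited source, since the paper says it follows that source and still counts hidden layers only. This is harmless, because only the depth and size bounds are used later.
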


Using Lemmas \ref{lem:5.3}, \ref{lem:5.11} and \ref{lem:5.2}, we analyze the ReLU neural network 
$$\Phi_\varepsilon^A := \Phi_{2n}^{\min} \bullet (\Phi_\varepsilon^{\mathrm{F}_1},\ldots,\Phi_\varepsilon^{\mathrm{F}_n},\Phi_\varepsilon^{\mathrm{G}_1}, \ldots, \Phi_\varepsilon^{\mathrm{G}_n}),$$
which is our candidate for approximating $\chi_{\cl(A)}$. 
Observe that $\Phi_\varepsilon^A(x) = \min_{j\in[n]}\{\Phi_\varepsilon^{\mathrm{F}_j}(x),\Phi_\varepsilon^{\mathrm{G}_j}(x)\}$ and
\begin{align*}
\depth(\Phi_\varepsilon^A) & = 1 + \depth(\Phi_{2n}^{\min} ) + \depth((\Phi_\varepsilon^{\mathrm{F}_1},\ldots,\Phi_\varepsilon^{\mathrm{F}_n},\Phi_\varepsilon^{\mathrm{G}_1}, \ldots, \Phi_\varepsilon^{\mathrm{G}_n})) \\ 
& \leq 1+\lceil\log_2(2n)\rceil+\max_{i\in[n]} \{\depth(\Phi_\varepsilon^{\mathrm{F}_i}),\depth(\Phi_\varepsilon^{\mathrm{G}_i})\} \\ 
& \leq 1+\lceil \log_2(2n)\rceil+(2+\lceil \log_2(m)\rceil)(14+2m/n)\\
& \leq \lceil \log_2(n)\rceil+(3+\lceil \log_2(m)\rceil)(14+2m/n)
\end{align*}
is independent from $\varepsilon$. Moreover,
\begin{align*}
\size(\Phi_\varepsilon^A) & \leq 2\size(\Phi_{2n}^{\min}) + 2\size((\Phi_\varepsilon^{\mathrm{F}_1},\ldots,\Phi_\varepsilon^{\mathrm{F}_n},\Phi_\varepsilon^{\mathrm{G}_1}, \ldots, \Phi_\varepsilon^{\mathrm{G}_n})) \\
& \leq 32n + 8n \max_{i\in[n]}\{\size(\Phi_\varepsilon^{\mathrm{F}_i}),\size(\Phi_\varepsilon^{\mathrm{G}_i})\}+8n\max_{i\in[n]}\{\depth(\Phi_\varepsilon^{\mathrm{F}_i}),\depth(\Phi_\varepsilon^{\mathrm{G}_i})\} \\
& \leq 32n+ 8nc\varepsilon^{-p(n-1)/m} + 8n(2+\lceil \log_2(m)\rceil)(14+2m/n) = \tilde{c}\cdot\varepsilon^{-p(n-1)/m}
\end{align*}
where the constant $\tilde{c} = \tilde{c}(n,m,B,p) > 0$ is again independent of $\varepsilon$.

Also, $\norm{\Phi_\varepsilon^A}_{\max} \leq \varepsilon^{-\tilde{s}}$ where $\tilde{s} = \tilde{s}(n,m,B,p) > 0$ is a constant independent from $\varepsilon$ as $\norm{\Phi_{2n}^{\min}}_{\max}$ is fixed and the ReLU neural networks for parallelization and composition only depend on the weights of the ReLU neural networks they connect in a predetermined way.

For the error estimation between $\chi_{\cl(A)}$ and $\Phi_\varepsilon^A$ in $L^p$, we use the following lemma.

\begin{lemma}\label{lem:minmax}
For $L^p$ functions $\alpha_i,\beta_i:\Omega \to \R$, $\Omega \subseteq \R^n$, $i \in [\ell]$, we have that $$\norm{\min_{j\in[\ell]} \alpha_j - \min_{j\in[\ell]} \beta_j}_{L^p(\Omega)}, \norm{\max_{j\in[\ell]} \alpha_j - \max_{j\in[\ell]} \beta_j}_{L^p(\Omega)} \leq \sum_{i=1}^\ell \norm{\alpha_i-\beta_i}_{L^p(\Omega)}.$$
\end{lemma}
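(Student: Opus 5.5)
The proof reduces to a pointwise inequality, which we then integrate via Minkowski's inequality. We treat the minimum in detail; the maximum follows either by the same argument or from the identity $\max_j \alpha_j = -\min_j(-\alpha_j)$, since replacing $\alpha_i,\beta_i$ by $-\alpha_i,-\beta_i$ leaves each $\norm{\alpha_i-\beta_i}_{L^p(\Omega)}$ unchanged.

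\textbf{Step 1 (pointwise bound).} Fix $x\in\Omega$. We claim
$$\Bigl|\min_{j\in[\ell]}\alpha_j(x)-\min_{j\in[\ell]}\beta_j(x)\Bigr|\leq \max_{i\in[\ell]}\abs{\alpha_i(x)-\beta_i(x)}\leq \sum_{i=1}^\ell\abs{\alpha_i(x)-\beta_i(x)}.$$
Let $k$ be an index attaining $\min_j\beta_j(x)$. Then
$$\min_j\alpha_j(x)-\min_j\beta_j(x)\leq \alpha_k(x)-\beta_k(x)\leq \max_i\abs{\alpha_i(x)-\beta_i(x)}.$$
Exchanging the roles of $\alpha$ and $\beta$ gives the reverse inequality, which proves the first bound. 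The second bound holds because the maximum of finitely many nonnegative numbers is at most their sum.

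\textbf{Step 2 (measurability and integrability).} Pointwise minima and maxima of finitely many measurable functions are measurable. Moreover, $\abs{\min_j\alpha_j}\leq\sum_j\abs{\alpha_j}$, so both sides of the claimed inequality are finite and the $L^p$ norms make sense.

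\textbf{Step 3 (integration).} The $L^p$ norm is monotone under pointwise domination of absolute values, so Step 1 gives
$$\Bigl\|\min_j\alpha_j-\min_j\beta_j\Bigr\|_{L^p(\Omega)}\leq\Bigl\|\sum_{i=1}^\ell\abs{\alpha_i-\beta_i}\Bigr\|_{L^p(\Omega)}.$$
The triangle (Minkowski) inequality then bounds the right-hand side by $\sum_{i=1}^\ell\norm{\alpha_i-\beta_i}_{L^p(\Omega)}$.

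None of these steps is a genuine obstacle. The only point needing care is Step 1, where a single minimizing index has to be chosen correctly for each of the two directions of the inequality.
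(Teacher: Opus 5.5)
Your proposal is correct and follows essentially the same route as the paper's proof: a pointwise bound $\abs{\min_j\alpha_j(x)-\min_j\beta_j(x)}\le\max_j\abs{\alpha_j(x)-\beta_j(x)}$ obtained from a minimizing index in each direction, then bounding the maximum by the sum, then Minkowski's inequality. The differences are cosmetic: you derive the max case from $\max_j\alpha_j=-\min_j(-\alpha_j)$ instead of repeating the argument with maximizing indices, and you add a harmless measurability and integrability check.
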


\begin{proof}
Let $x \in \Omega$ and $i,k \in [\ell]$ such that $\alpha_i(x) = \min_{j\in[\ell]} \alpha_j(x)$ and $\beta_k(x) = \min_{j\in[\ell]} \beta_j(x)$. Then
$$\min_{j\in[\ell]} \alpha_j(x) - \min_{j\in[\ell]} \beta_j(x) = \alpha_i(x) - \beta_k(x) \leq \alpha_k(x) - \beta_k(x) \leq \max_{j\in[\ell]}\abs{\alpha_j(x) - \beta_j(x)}$$
and
$$\min_{j\in[\ell]} \beta_j(x) - \min_{j\in[\ell]} \alpha_j(x) = \beta_k(x) - \alpha_i(x) \leq \beta_i(x) - \alpha_i(x) \leq \max_{j\in[\ell]}\abs{\beta_j(x) - \alpha_j(x)},$$
so $\abs{\min_{j\in[\ell]} \alpha_j(x) - \min_{j\in[\ell]} \beta_j(x)} \leq \max_{j\in[\ell]}\abs{\alpha_j(x) - \beta_j(x)}$. 
With the estimate $$\norm{\max_{j\in[\ell]}\abs{\beta_j(x) - \alpha_j(x)}}_{L^p(\Omega)} \leq \norm{\sum_{j=1}^\ell \abs{\beta_j - \alpha_j}}_{L^p(\Omega)}$$
we get
$$\norm{\min_{j\in[\ell]} \alpha_j - \min_{j\in[\ell]} \beta_j}_{L^p(\Omega)} \leq \norm{\sum_{j=1}^\ell\abs{\beta_j - \alpha_j}}_{L^p(\Omega)} \leq \sum_{j=1}^\ell \norm{\beta_j - \alpha_j}_{L^p(\Omega)}$$
using Minkowski's inequality. 
Similarly, taking  $i,k \in [\ell]$ such that $\alpha_i(x) = \max_{j\in[\ell]} \alpha_j(x)$ and $\beta_k(x) = \max_{j\in[\ell]} \beta_j(x)$ yields that
$$\max_{j\in[\ell]} \alpha_j(x) - \max_{j\in[\ell]} \beta_j(x) = \alpha_i(x) - \beta_k(x) \leq \alpha_i(x) - \beta_i(x) \leq \max_{j\in[\ell]}\abs{\alpha_j(x) - \beta_j(x)}$$
and
$$\max_{j\in[\ell]} \beta_j(x) - \max_{j\in[\ell]} \alpha_j(x) = \beta_k(x) - \alpha_i(x) \leq \beta_k(x) - \alpha_k(x) \leq \max_{j\in[\ell]}\abs{\beta_j(x) - \alpha_j(x)},$$
and the same reasoning as above shows the lemma.
\end{proof}

Letting $\alpha_j$ vary over $\mathrm{F}_i$ and $\mathrm{G}_i$, $\beta_j$ over $\Phi_\varepsilon^{\mathrm{F}_i}$ and $\Phi_\varepsilon^{\mathrm{G}_i}$ in Lemma \ref{lem:minmax} and applying Theorem \ref{thm:optimalapprox_3.4} gives
$$\norm{\chi_{\cl(A)} - \Phi_\varepsilon^A}_{L^p(Q_n)} \leq \sum_{i=1}^n \norm{\Phi_\varepsilon^{\mathrm{F}_i}-\mathrm{F}_i}_{L^p(Q_n)} + \norm{\Phi_\varepsilon^{\mathrm{G}_i}-\mathrm{G}_i}_{L^p(Q_n)} < 2n\varepsilon.$$
This concludes the approximation of the class $\mathcal{K}_{m,n}^B$ by ReLU neural networks when replacing $\varepsilon$ by $\varepsilon/2n$. This does not change the rates, only the constants, and
\begin{align*}
\norm{\chi_A-\Phi_{\varepsilon/4n}^A}_{L^p(Q_n)} & \leq \norm{\chi_{\cl(A)}-\Phi_{\varepsilon/4n}^A}_{L^p(Q_n)} + \norm{\chi_A-\chi_{\cl(A)}}_{L^p(Q_n)}  \\
& = \norm{\chi_{\cl(A)}-\Phi_{\varepsilon/4n}^A}_{L^p(Q_n)} + 0 < \varepsilon.    
\end{align*}
The following theorem summarizes our findings. 

\begin{theorem}\label{thm:singletracable}
For $B>0$, $n \geq 2$, there are constants $c=c(n,m,B,p)>0$ and $s=s(n,m,B,p) > 0$ such that for every $A \in \mathcal{K}_{m,n}^B$ and $0<\varepsilon < 1/2$, there is a ReLU neural network $\Phi_\varepsilon^A:\R^n \to [0,1]$ satisfying
\begin{itemize}
    \item $\norm{\Phi_\varepsilon^A}_{\max} \leq \varepsilon^{-s}$,
    \item $\depth(\Phi_\varepsilon^A) \leq \lceil \log_2(n)\rceil+(3+\lceil \log_2(m)\rceil)(14+2m/n)$,
    \item $\size(\Phi_\varepsilon^A) \leq c\cdot\varepsilon^{-p(n-1)/m}$,
\end{itemize}
with 
$$\norm{\chi_A-\Phi_\varepsilon^A}_{L^p(Q_n)} < \varepsilon.$$
\end{theorem}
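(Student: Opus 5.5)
The plan is to assemble the network from the horizon-function approximants, exactly along the lines of the discussion preceding the statement. Fix $A \in \mathcal{K}_{m,n}^B$ with signature datum $(\{f_i,g_i\}_{i\in[n]})$. By the $(m,B)$-extension property we choose $C^m$ extensions $F_i,G_i \in \mathcal{F}_{m,i-1}^B$ and form the horizon functions $\mathrm{F}_i \in \mathcal{HF}_{m,n}^{B,+}$ and $\mathrm{G}_i \in \mathcal{HF}_{m,n}^{B,-}$. Theorem \ref{thm:optimalapprox_3.4} then provides, with constants $c,s$ depending only on $(n,m,B,p)$ and hence uniformly in $A$, networks $\Phi_\delta^{\mathrm{F}_i},\Phi_\delta^{\mathrm{G}_i}$ with $L^p$-error $<\delta$. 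We apply it with $\delta:=\varepsilon/(4n)$. Note that $\delta<1/2$, as required.

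Next, define $\Phi_\varepsilon^A := \Phi_{2n}^{\min}\bullet(\Phi_\delta^{\mathrm{F}_1},\ldots,\Phi_\delta^{\mathrm{G}_n})$ using Lemmas \ref{lem:5.3}, \ref{lem:5.11} and \ref{lem:5.2}. Its output is a minimum of $[0,1]$-valued functions, so it maps into $[0,1]$.

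\textbf{Architecture bounds.} The depth bound is the computation already displayed: $1+\lceil\log_2(2n)\rceil = 2+\lceil\log_2 n\rceil$, and the extra $+1$ is absorbed by the factor $(3+\lceil\log_2 m\rceil)$ replacing $(2+\lceil\log_2 m\rceil)$. This works since $14+2m/n\geq 1$. The size bound is $32n+8nc\,\delta^{-p(n-1)/m}+8n\cdot\mathrm{depth}$. Because $\varepsilon<1/2$, each constant term is dominated by a constant multiple of $\varepsilon^{-p(n-1)/m}$, and $\delta^{-p(n-1)/m}=(4n)^{p(n-1)/m}\varepsilon^{-p(n-1)/m}$. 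So the size is at most $c'\varepsilon^{-p(n-1)/m}$. For the weights, the weights of the parallelization and composition constructions are either copies of the component weights, fixed constants of $\Phi_{2n}^{\min}$, or products of output weights of one network with input weights of the next in the composition. This gives a bound like $\max(C,\delta^{-2s})\leq \varepsilon^{-s'}$ for a suitable $s'$, again using $\varepsilon<1/2$ to absorb constants into a power of $\varepsilon^{-1}$.

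\textbf{Error estimate.} By Lemma \ref{lem:closuredescription}, since $A$ is $1$-extendable (the extensions restrict continuously to the closures), we have $\chi_{\cl(A)}=\min_i\{\mathrm{F}_i,\mathrm{G}_i\}$ pointwise on $Q_n$. Lemma \ref{lem:minmax} then gives $\norm{\chi_{\cl(A)}-\Phi_\varepsilon^A}_{L^p}<2n\delta=\varepsilon/2$. Finally, Corollary \ref{cor:frontier} gives $\lambda^n(\fr(A))=0$, so $\chi_A=\chi_{\cl(A)}$ in $L^p$, and the total error is $<\varepsilon$.

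The main obstacle is justifying $\chi_{\cl(A)}=\min_i\{\mathrm{F}_i,\mathrm{G}_i\}$ for arbitrary extensions $F_i,G_i$ defined on $Q_{i-1}$. Off $\cl(A_{i-1})$ the horizon functions are not tied to $A$. I would handle this by induction on $i$: if $x\notin\cl(A)$, take the first $i$ with $x^{\leq i}\notin\cl(A_i)$. Then $x^{<i}\in\cl(A_{i-1})$, where $F_i,G_i$ agree with the envelopes $\underline{f}_i,\overline{g}_i$ by continuity. The equality case of Lemma \ref{lem:closuredescription} then forces $x_i<F_i$ or $x_i>G_i$, so some horizon function vanishes. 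The remaining work is routine bookkeeping of constants.
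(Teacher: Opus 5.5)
Your proposal is correct and follows the paper's own argument. The two share the same network $\Phi_{2n}^{\min}\bullet(\Phi^{\mathrm{F}_1}_{\varepsilon/4n},\ldots,\Phi^{\mathrm{G}_n}_{\varepsilon/4n})$, the same use of Lemma \ref{lem:minmax}, and the same reduction $\chi_A=\chi_{\cl(A)}$ a.e.\ via Lemma \ref{lem:closuredescription} and Corollary \ref{cor:frontier}. Your first-failing-index argument for $\chi_{\cl(A)}=\min_i\{\mathrm{F}_i,\mathrm{G}_i\}$ makes explicit the independence from the chosen extensions $F_i,G_i$, which the paper only asserts; in the depth count the surplus to absorb is $2$ rather than $1$, which is harmless since $14+2m/n\geq 2$.
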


Next, we approximate the more general class $\mathcal{K}_{m,n}^{B,\ell}$ by ReLU neural networks. Again, we showcase the general ReLU neural network template on a particular $\Omega \in \mathcal{K}_{m,n}^{B,\ell}$, say $\Omega = A^1 \cup \cdots \cup A^l$ for some $l\leq \ell$, where $A^j \in \mathcal{K}_{m,n}^B$.
Applying Theorem \ref{thm:singletracable} to each $A^j$ for $j\in[l]$ yields ReLU neural networks $\Phi^j_\varepsilon: \R^n \to [0,1]$ satisfying
\begin{itemize}
    \item $\norm{\Phi_\varepsilon^j}_{\max} \leq \varepsilon^{-s}$,
    \item $\depth(\Phi_\varepsilon^j) \leq \lceil \log_2(n)\rceil+(3+\lceil \log_2(m)\rceil)(14+2m/n)$,
    \item $\size(\Phi_\varepsilon^j) \leq c\cdot\varepsilon^{-p(n-1)/m}$,
    \item $\norm{\chi_{A^j}-\Phi_\varepsilon^j}_{L^p(Q_n)} < \varepsilon.$
\end{itemize}
For simplicity, we take placeholder ReLU neural networks $\Phi^j_\varepsilon: \R^n \to [0,1]$ for $j\in\{l+1,\ldots,\ell\}$ satisfying the same bounds as $\Phi^j_\varepsilon$ for $j\in[l]$ which have all weights set to zero, so that $\Phi^j_\varepsilon(x) = 0$ for all $x \in \R^n$.

To approximate $\chi_{\cl(\Omega)} = \max_{j\in l} \chi_{\cl(A^j)}$, we define the ReLU neural network
\begin{equation*}
\Phi^\Omega_\varepsilon :=
\begin{cases*}
\Phi^{\max}_\ell \bullet (\Phi_\varepsilon^1,\ldots,\Phi_\varepsilon^\ell), \quad \ell \geq 2 \\
\Phi_\varepsilon^1, \quad \ell = 1 \\
0, \quad \ell = 0.
\end{cases*}
\end{equation*}
If $\ell = 0$, the approximation is trivial. In case $\ell = 1$ and $\Omega \neq \emptyset$, we get $\Omega \in \mathcal{K}_{m,n}^B$, which is covered by Theorem \ref{thm:singletracable}. So we assume $\ell \geq 2$. Then $\Phi^\Omega_\varepsilon(x) = \max \{\Phi_\varepsilon^1(x), \ldots, \Phi_\varepsilon^l(x)\}$ using that $\Phi_{l+1} = \cdots = \Phi_\ell = 0$ and that $\max_{j\in\emptyset} \Phi_\varepsilon^j = 0$. Lemma \ref{lem:minmax} yields
$$\norm{\chi_{\cl(\Omega)} - \Phi^\Omega_\varepsilon}_{L^p(Q_n)} \leq \sum_{j=1}^\ell \norm{\chi_{\cl(A^j)} - \Phi^j_\varepsilon}_{L^p(Q_n)} \leq \ell\varepsilon.$$
It remains to study the architecture of $\Phi^\Omega_\varepsilon$. We observe that
\begin{align*}
\depth(\Phi^\Omega_\varepsilon) & = 1 + \depth(\Phi_\ell^{\max} ) + \depth((\Phi_\varepsilon^1,\ldots,\Phi_\varepsilon^\ell)) \\ 
& \leq 1+\lceil\log_2(\ell)\rceil+\max_j \depth(\Phi^j_\varepsilon) \\ 
& \leq 1+\lceil \log_2(\ell)\rceil+\lceil \log_2(n)\rceil+(2+\lceil \log_2(m)\rceil)(14+2m/n) \\
& \leq \lceil \log_2(\ell n)\rceil +(3+\lceil \log_2(m)\rceil)(14+2m/n)
\end{align*}
is independent from $\varepsilon$ and
\begin{align*}
\size(\Phi^\Omega_\varepsilon) & \leq 2\size(\Phi_\ell^{\max}) + 2\size((\Phi_\varepsilon^1,\ldots,\Phi_\varepsilon^\ell)) \\
& \leq 32\ell + 4\ell\max_j\size(\Phi^j_\varepsilon)+4\ell\max_j\depth(\Phi_\varepsilon^j) \\
& \leq 32\ell+ 4\ell c\varepsilon^{-p(n-1)/m} + 4\ell\lceil \log_2(n)\rceil + 4\ell(3+\lceil \log_2(m)\rceil)(14+2m/n)) \\
& = \tilde{c} \cdot \varepsilon^{-p(n-1)/m}
\end{align*}
where the constant $\tilde{c} = \tilde{c}(\ell,m,n,B,p)$ is again independent of $\varepsilon$.

Also, $\norm{\Phi^\Omega_\varepsilon}_{\max} \leq \varepsilon^{-\tilde{s}}$ where $\tilde{s}(\ell,m,n,B,p)$ is independent of $\varepsilon$ as $\norm{\Phi_\ell^{\max}}_{\max}$ is fixed and
the ReLU neural networks for parallelization and composition only depend on the weights of the ReLU neural networks they connect in a predetermined way.
Replacing $\varepsilon$ with $\varepsilon/\ell$ only changes the constants, but not the rate.
Thus we have shown Theorem \ref{thm:mainapprox}, restated here for the convenience of the reader. Note that the depth bound also holds for $\ell=1$ by Theorem \ref{thm:singletracable}. However, for $\ell = 0$, the bound is not well defined, hence we replace $\ell$ by $\ell +1$.

\begin{theorem}
For $\ell \in \N,B > 0, n\geq 2$, there exist constants $c=c(\ell,m,n,B,p)>0$ and $s=s(\ell,m,n,B,p)>0$ so that for every $\Omega \in \mathcal{K}_{m,n}^{B,\ell}$ and $0<\varepsilon<1/2$, there is a ReLU neural network $\Phi^\Omega_\varepsilon: \R^n \to [0,1]$ with 
\begin{itemize}
    \item  $\norm{\Phi^\Omega_\varepsilon}_{\max} \leq \varepsilon^{-s}$,
    \item $\depth(\Phi^\Omega_\varepsilon) \leq \lceil \log_2((\ell+1) n)\rceil+(3+\lceil \log_2(m)\rceil)(14+2m/n)$,
    \item $\size(\Phi^\Omega_\varepsilon) \leq c\cdot\varepsilon^{-p(n-1)/m}$,
\end{itemize}
such that
$$\norm{\Phi^\Omega_\varepsilon-\chi_\Omega}_{L^p(Q_n)} < \varepsilon.$$
\end{theorem}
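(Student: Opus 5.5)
The plan is to assemble the estimate from the single-set result, Theorem~\ref{thm:singletracable}, together with the building blocks Lemmas~\ref{lem:5.3}, \ref{lem:5.11}, \ref{lem:5.2} and the error bound of Lemma~\ref{lem:minmax}.

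\emph{Reduction.} Fix $\Omega\in\mathcal{K}_{m,n}^{B,\ell}$ and write $\Omega=A^1\cup\cdots\cup A^l$ with $l\le\ell$, the $A^j\in\mathcal{K}_{m,n}^B$ pairwise disjoint. We dispose of degenerate cases first. If $l=0$, then $\Omega=\emptyset$ and the zero network works. If $\ell=1$, the claim is Theorem~\ref{thm:singletracable}; its depth bound $\lceil\log_2 n\rceil+\dots$ is at most $\lceil\log_2(2n)\rceil+\dots$. Otherwise we may assume $\ell\ge 2$ and pad with zero networks $\Phi^{j}_\varepsilon\equiv 0$ for $l<j\le\ell$. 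Since the $A^j$ are disjoint, $\chi_\Omega=\max_{j\in[l]}\chi_{A^j}$ pointwise, so disjointness is only used here. Equivalently, $\chi_\Omega$ agrees a.e.\ with $\max_j\chi_{\cl(A^j)}$, using Corollary~\ref{cor:frontier}.

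\emph{Construction.} Apply Theorem~\ref{thm:singletracable} to each $A^j$ with accuracy $\varepsilon/\ell$. This gives networks $\Phi^j$ with weights at most $(\varepsilon/\ell)^{-s}$, a uniform depth bound, size at most $c\,\ell^{p(n-1)/m}\varepsilon^{-p(n-1)/m}$, and $\|\chi_{A^j}-\Phi^j\|_{L^p}<\varepsilon/\ell$. Set
\[
\Phi^\Omega_\varepsilon:=\Phi^{\max}_\ell\bullet(\Phi^1,\dots,\Phi^\ell).
\]
Its output lies in $[0,1]$ because each $\Phi^j$ does. Lemma~\ref{lem:minmax} then gives
\[
\|\Phi^\Omega_\varepsilon-\chi_\Omega\|_{L^p}\le\sum_j\|\Phi^j-\chi_{A^j}\|_{L^p}<\varepsilon .
\]

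\emph{Architecture bookkeeping.} By Lemmas~\ref{lem:5.2}, \ref{lem:5.3} and \ref{lem:5.11}, the depth is at most
\[
1+\lceil\log_2\ell\rceil+\lceil\log_2 n\rceil+(3+\lceil\log_2 m\rceil)(14+2m/n).
\]
This is the point I expect to need the most care: the single-set depth bound from Theorem~\ref{thm:singletracable} already contains the "$+1$" of its own composition. I would therefore go back to the finer bound $1+\lceil\log_2(2n)\rceil+(2+\lceil\log_2 m\rceil)(14+2m/n)$ from its derivation, so that the additional $1+\lceil\log_2\ell\rceil$ fits into the spare $(14+2m/n)$ term and into $\lceil\log_2((\ell+1)n)\rceil$.

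The size is at most $32\ell+4\ell\max_j\size(\Phi^j)+4\ell\max_j\depth(\Phi^j)$, which is $\le \tilde c\,\varepsilon^{-p(n-1)/m}$ since $\varepsilon<1/2$ lets us absorb the constants. For the weights, composition and parallelization only copy weights or multiply them by fixed constants, and $\Phi^{\max}_\ell$ has fixed weights. Hence $\|\Phi^\Omega_\varepsilon\|_{\max}\le C(\varepsilon/\ell)^{-s}\le\varepsilon^{-\tilde s}$ for a suitable $\tilde s$, again using $\varepsilon<1/2$ to absorb the constants into the exponent.
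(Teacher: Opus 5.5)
Your proposal is correct and follows the paper's own argument: decompose $\Omega$, apply Theorem~\ref{thm:singletracable} to each $A^j$, pad with zero networks, combine via $\Phi^{\max}_\ell\bullet(\Phi^1,\dots,\Phi^\ell)$, control the error with Lemma~\ref{lem:minmax}, and rescale $\varepsilon$ by $\ell$. Going back to the finer single-set depth bound $1+\lceil\log_2(2n)\rceil+(2+\lceil\log_2 m\rceil)(14+2m/n)$ is exactly what is needed for the stated depth bound, and the paper's displayed depth chain is slightly less careful at this step (its third line drops two layers), although its final bound still holds; as a minor remark, $\chi_\Omega=\max_j\chi_{A^j}$ holds for any finite union, so disjointness is not actually used there.
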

In the sense of \cite{petersen2018optimal}, the lower bound  $\Omega(\varepsilon^{-p(n-1)/(m+1)}/\log_2(1/\varepsilon))$ on the achievable approximation rate for $\mathcal{HF}_{m+1,n,B}$ also transfers to $\mathcal{K}_{m,n}^{B,\ell} \supseteq \mathcal{HF}_{m,n}^B$ as $\mathcal{F}_{m+1,n,B} \subseteq \mathcal{F}_{m,n}^B$ and so $\mathcal{HF}_{m+1,n,B} \subseteq \mathcal{HF}_{m,n}^B$ modulo coordinate permutations to the last coordinate.

\subsection{Approximating definable functions}\label{sec:piecewisesmooth}

By Theorem \ref{thm:omincell}, for every definable map $f:Q_n\to\R$, there is a $C^m$-cell decomposition $\mathcal{C}$ of $Q_n$ so that $f|_C$ is $C^m$ for every $C\in\mathcal{C}$.
Motivated by this fact and in the spirit of \cite{petersen2018optimal}, we establish ReLU neural network approximation of piecewise smooth functions whose smoothness domains are given by finitely many pairwise disjoint sets contained in $\mathcal{K}_{m,n}^B$ for some $B>0$, expanding the approximation theory of definable sets to a subclass of definable functions.

An important tool to achieve approximation results in this setting is the approximation of the function class $\mathcal{F}_{m,n}^B \subseteq \mathcal{F}_{m,n,\sqrt{n}B}$, which can be inferred from \cite{petersen2018optimal} , Theorem 3.1. Here we use again that the constants depending on $B$ also depend on $n$, ignoring the factor of $\sqrt{n}$.

\begin{theorem}\label{thm:optimalapprox_3.1}
For $B > 0$, there exist constants $c=c(n,m,B)>0$ and $s=s(n,m,B,p)>0$ such that for every $f \in \mathcal{F}_{m,n}^B$ and $0<\varepsilon < 1/2$, there is a ReLU neural network $\Phi_\varepsilon^f:\R^n \to \R$ with
\begin{itemize}
    \item $\norm{\Phi_\varepsilon^f}_{L^\infty(Q_n)} \leq \lceil B\rceil$,
    \item $\norm{\Phi_\varepsilon^f}_{\max} \leq \varepsilon^{-s}$,
    \item $\size(\Phi_\varepsilon^f) \leq c \cdot \varepsilon^{-n/m}$,
    \item $\depth(\Phi_\varepsilon^f) \leq (2+\lceil \log_2(m)\rceil)(11+m/n)$,
\end{itemize}
and $$\norm{f - \Phi_\varepsilon^f}_{L^p(Q_n)} < \varepsilon.$$
\end{theorem}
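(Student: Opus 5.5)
The plan is to deduce Theorem~\ref{thm:optimalapprox_3.1} from \cite{petersen2018optimal}, Theorem 3.1, in the same way that Theorem~\ref{thm:optimalapprox_3.4} was deduced from Theorem 3.4 there. The first step is the class inclusion. For $f\in\mathcal{F}_{m,n}^B$ and $\abs{\alpha}=m-1$, the mean value estimate stated after the definition of $\mathcal{F}_{m,n}^B$ gives
\[
\abs{\partial^\alpha f(x)-\partial^\alpha f(y)}\le \sqrt{n}B\norm{x-y}.
\]
Together with $\norm{\partial^\alpha f}_{L^\infty}\le B$ for $\abs{\alpha}\le m$, this yields $\mathcal{F}_{m,n}^B\subseteq\mathcal{F}_{\beta,n,\sqrt{n}B}$ with $\beta=(m-1)+1=m$. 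Since the constants in \cite{petersen2018optimal} may depend on $n$, we replace $\sqrt{n}B$ by $B$ throughout and absorb the factor into $c$ and $s$.

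Next, I will invoke \cite{petersen2018optimal}, Theorem 3.1, for $\beta=m$. It provides, for every $f$ in the class and every $0<\varepsilon<1/2$, a ReLU network $\Phi$ with depth at most $(2+\lceil\log_2 \beta\rceil)(11+\beta/n)$, at most $c\,\varepsilon^{-pn/\beta}$ nonzero weights, weights bounded by $\varepsilon^{-s}$, $\norm{\Phi}_{L^\infty}\le\lceil B\rceil$, and $\norm{f-\Phi}_{L^p}<\varepsilon$. Substituting $\beta=m$ gives the depth bound $(2+\lceil\log_2 m\rceil)(11+m/n)$ and the weight and sup-norm bounds directly. The cited theorem is formulated on $[-1/2,1/2]^n$, which is $Q_n$, so no rescaling is needed. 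If it were stated on $[0,1]^n$ instead, I would precompose with the affine map $x\mapsto x+\tfrac12\mathbf{1}$. That can be merged into the first layer without changing depth or size, and it only shifts biases by a bounded amount, which $\varepsilon^{-s}$ absorbs after enlarging $s$.

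The main obstacle is the size exponent. The cited theorem controls size by $\varepsilon^{-p n/\beta}$, since $L^p$ error $\varepsilon$ corresponds to $L^\infty$-type error $\varepsilon^{p}$ on the relevant pieces. The statement as written has $\varepsilon^{-n/m}$. I would resolve this by checking how the constant and exponent in \cite{petersen2018optimal}, Theorem 3.1, are actually formulated. For smooth functions, the construction there is a local Taylor approximation in $L^\infty$, and an $L^\infty$ error $\varepsilon$ on the bounded domain $Q_n$ already gives $L^p$ error at most $\varepsilon$ because $\lambda^n(Q_n)=1$. So the $p$-independent rate $\varepsilon^{-n/m}$ should follow from the uniform approximation statement itself, with $c$ depending only on $(n,m,B)$ as claimed. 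Only the weight exponent $s$ may depend on $p$, through the clipping used to enforce $\norm{\Phi}_{L^\infty}\le\lceil B\rceil$.

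Finally, I will enforce the sup-norm bound $\lceil B\rceil$, if it is not already supplied, by postcomposing with the clipping network $x\mapsto \rho(x+\lceil B\rceil)-\rho(x-\lceil B\rceil)-\lceil B\rceil$. Since $\abs{f}\le B$, this clipping does not increase the error. It would add depth; to stay within the stated bound it should be merged into the output layer, as in the cited construction.
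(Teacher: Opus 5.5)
Your proposal is correct and is essentially the paper's argument: embed $\mathcal{F}_{m,n}^B$ into $\mathcal{F}_{\beta,n,\sqrt{n}B}$ with $\beta=m$, invoke \cite{petersen2018optimal}, Theorem 3.1, and absorb the factor $\sqrt{n}$ into the constants. The cited theorem is already a uniform ($L^\infty$) statement on $[-1/2,1/2]^n$ with at most $c\,\varepsilon^{-n/\beta}$ weights and a built-in sup-norm bound, so the $L^p$ estimate follows from $\lambda^n(Q_n)=1$ and your $\varepsilon^{-pn/\beta}$ ``obstacle'' and clipping step are unnecessary (strictly, that built-in bound is $\lceil\sqrt{n}B\rceil$ for the enlarged class, a harmless looseness the paper shares; clipping to $\lceil B\rceil$ would cost an extra hidden layer and cannot be merged into the affine output layer).
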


In order to balance the asymptotic rates for the number of non-zero weights of ReLU neural networks approximating the class $\mathcal{K}_{m,n}^B$, i.e. $\varepsilon^{-p(n-1)/m}$ and the function class $\mathcal{F}_{m,n}^B$, i.e. $\varepsilon^{-n/m}$, we assume smoothness of degree at least $\lceil nm/p(n-1)\rceil$, introducing the class of functions
$$\mathcal{G}^{B,\ell}_{m,n,p}:=\{\sum_{i=1}^\ell g_i \cdot \chi_{A^i}: \quad g_i \in \mathcal{F}_{\lceil mn/p(n-1)\rceil,n}^B,A^i \in \mathcal{K}_{m,n}^B, Q_n =\bigsqcup_{i=1}^\ell A^i \}.$$
Here, $\bigsqcup$ denotes a disjoint union of sets.
Since $0\in\mathcal{F}_{\lceil mn/p(n-1)\rceil,n}^B$, we have that $\mathcal{G}^{B,\ell'}_{m,n,p} \subseteq \mathcal{G}^{B,\ell}_{m,n,p}$ for $\ell' \leq \ell$.
First, we replicate the multiplication operator on bounded domains by ReLU neural networks with a fixed number of layers as in \cite{petersen2018optimal}, Lemma A.3.

\begin{lemma}\label{lem:multapprox}
Let $\theta > 0$. For every $L>(2\theta)^{-1}$ and $M \geq 1$, there are constants $c=c(L,M,\theta),s=s(M)\in \N$ such that for every $0 < \varepsilon < 1/2$, there is a ReLU neural network $\tilde{\times}_\varepsilon: \R^2 \to \R$ such that for all $(x,y) \in [-M,M]^2$,
$$\abs{\tilde{\times}_\varepsilon(x,y) -x\cdot y} \leq \varepsilon$$
with $\tilde{\times}_\varepsilon(0,y) = \tilde{\times}_\varepsilon(x,0) = 0$ which satisfies
\begin{itemize}
    \item  $\norm{\tilde{\times}_\varepsilon}_{\max} \leq \varepsilon^{-s}$,
    \item $\size(\tilde{\times}_\varepsilon) \leq c\cdot \varepsilon^{-\theta}$,
    \item $\depth(\tilde{\times}_\varepsilon) = 2L+7$.
\end{itemize}
\end{lemma}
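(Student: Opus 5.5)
This is the multiplication lemma of \cite{petersen2018optimal}, Lemma A.3, and the plan follows its construction. The whole argument rests on the sawtooth (Yarotsky-type) approximation of the square, combined with a polarization identity. The one new ingredient is a trade between depth and size, controlled by the parameter $L$.

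First I reduce to the square function. Rescale to $[-1,1]$ and use
\[
xy=2M^2\Bigl(\bigl(\tfrac{\abs{x+y}}{2M}\bigr)^2-\bigl(\tfrac{\abs{x}}{2M}\bigr)^2-\bigl(\tfrac{\abs{y}}{2M}\bigr)^2\Bigr).
\]
Here $\abs{t}=\rho(t)+\rho(-t)$ is computed exactly in one layer. Suppose $\Psi_\delta$ approximates $t\mapsto t^2$ on $[0,1]$ to accuracy $\delta$ with $\Psi_\delta(0)=0$. Then
\[
\tilde\times_\varepsilon(x,y):=2M^2\Bigl(\Psi_\delta\bigl(\tfrac{\abs{x+y}}{2M}\bigr)-\Psi_\delta\bigl(\tfrac{\abs{x}}{2M}\bigr)-\Psi_\delta\bigl(\tfrac{\abs{y}}{2M}\bigr)\Bigr)
\]
has error at most $6M^2\delta$, so I take $\delta=\varepsilon/(6M^2)$.

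The exact zero property does not follow from $\Psi_\delta(0)=0$ alone, because for $x=0$ the terms $\Psi_\delta(\abs{y}/2M)$ and $\Psi_\delta(\abs{x+y}/2M)$ cancel only if both use the same network. They do, since $\abs{0+y}=\abs{y}$. Hence $\tilde\times_\varepsilon(0,y)=2M^2(\Psi_\delta(\abs{y}/2M)-0-\Psi_\delta(\abs{y}/2M))=0$, and the case $y=0$ is symmetric. Parallelizing three copies of $\Psi_\delta$ via Lemma \ref{lem:5.3} and adding the affine input and output layers contributes only a constant to the depth. That constant has to be accounted for exactly to land on $2L+7$.

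The heart of the proof is building $\Psi_\delta$ with fixed depth $2L+O(1)$ and size $O(\delta^{-\theta})$, where $L>(2\theta)^{-1}$. The standard approach writes
\[
t^2\approx t-\sum_{k=1}^{K}4^{-k}g_k(t),
\]
with $g_k$ the $k$-fold composition of the hat function and error $4^{-K-1}$. Doing this with depth $K$ gives logarithmic size, but the depth would then grow with $\varepsilon$, which is not allowed. Instead I trade depth for width. Group the $K$ compositions into $L$ blocks of $K/L$ levels each. A single block computes the sawtooth $g_{K/L}$ together with all intermediate $g_k$, $k\le K/L$, in one or two layers. This works because $g_r$ is piecewise linear with $2^r$ pieces and can be written as a shallow sum of $O(2^r)$ ReLUs. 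Composing $L$ such blocks gives depth about $2L$. The total size is then $O(L\,2^{K/L})=O(\delta^{-1/(2L)})$, since $K\approx\tfrac12\log_2(1/\delta)$. Because $L>(2\theta)^{-1}$ means $1/(2L)<\theta$, this is at most $c\,\delta^{-\theta}$.

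One complication is the partial sums $\sum_k 4^{-k}g_k$ across blocks. The plan is to carry them forward as an accumulator neuron through identity channels, using $\rho(a)-\rho(-a)$, which costs $O(L)$ extra weights. I expect this depth-size bookkeeping, namely the exact shallow realization of a sawtooth with $2^r$ teeth, carried out so that the total depth comes out exactly as $2L+7$, to be the main obstacle.

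The weight bound comes from the breakpoints of $g_r$, which are multiples of $2^{-r}$. The corresponding slopes are of order $2^{r}\le\delta^{-1}$, and the rescaling constants depend only on $M$. Hence all weights are bounded by $\varepsilon^{-s}$ with $s=s(M)$, absorbing the factor $6M^2$ using $\varepsilon<1/2$.
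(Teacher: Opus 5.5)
Your proposal is correct and follows essentially the route the paper relies on: the paper gives no proof of its own and cites Petersen--Voigtlaender, Lemma A.3. That construction is exactly your polarization identity with $\abs{\cdot}$ computed exactly, combined with a fixed-depth sawtooth approximation of $t\mapsto t^2$ built by chaining $L$ shallow blocks of width $O(2^{K/L})$. Two points deserve to be made explicit. First, consecutive blocks must be joined through a scalar bottleneck layer (as in Lemma \ref{lem:5.2}) rather than by merging affine maps, since merging would create $O(4^{K/L})$ weights; this is why the depth is about $2L$. Second, the exact value $2L+7$ is then reached by padding with identity layers $\rho(a)-\rho(-a)$, so that bookkeeping is not a real obstacle.
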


With Lemma \ref{lem:multapprox}, we are ready to establish the approximation result for the function class $\mathcal{G}^{B,1}_{m,n,p}$.

\begin{theorem}\label{thm:smoothsingle}
For $B > 0$, $n\geq 2$, there exist constants $c=c(n,m,B,p)>0$ and $s=s(n,m,B,p)>0$ such that for every $f \in \mathcal{G}^{B,1}_{m,n,p}$ and $0<\varepsilon < 1/2$, there is a ReLU neural network $\Phi_\varepsilon^f:\R^n \to \R$ with
\begin{itemize}
    \item $\norm{\Phi_\varepsilon^f}_{\max} \leq \varepsilon^{-s}$,
    \item $\size(\Phi_\varepsilon^f) \leq c \cdot \varepsilon^{-p(n-1)/m}$,
    \item $\depth(\Phi_\varepsilon^f) \leq (5+\lceil\log_2(mn)\rceil)(25+2m/n+m/p(n-1))$,
\end{itemize}
and $$\norm{f - \Phi_\varepsilon^f}_{L^p(Q_n)} < \varepsilon.$$
\end{theorem}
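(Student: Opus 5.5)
Take $f \in \mathcal{G}^{B,1}_{m,n,p}$. Since $\ell=1$, we have $f = g\cdot\chi_A$ with $g \in \mathcal{F}^B_{k,n}$, where $k:=\lceil mn/p(n-1)\rceil$, and $A \in \mathcal{K}^B_{m,n}$ with $A = Q_n$. Here $Q_n$ is understood up to null sets: $\chi_A=1$ almost everywhere on $Q_n$ in the degenerate reading. For the general template, which will be reused for $\ell>1$, I do not use that $A=Q_n$. I approximate $g$ and $\chi_A$ separately and combine them with the approximate multiplication of Lemma~\ref{lem:multapprox}:
$$\Phi^f_\varepsilon := \tilde{\times}_{\delta}\bullet(\Phi^g_{\delta},\Phi^A_{\delta}).$$
Here $\Phi^g_\delta$ comes from Theorem~\ref{thm:optimalapprox_3.1} applied with smoothness $k$, and $\Phi^A_\delta$ from Theorem~\ref{thm:singletracable}. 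The accuracy $\delta = \varepsilon/C$ is chosen later.

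The first step is the size bookkeeping. Theorem~\ref{thm:optimalapprox_3.1} gives size $c\,\delta^{-n/k}$, and $n/k \le p(n-1)/m$ by the choice of $k$. Theorem~\ref{thm:singletracable} gives size $c\,\delta^{-p(n-1)/m}$. For the multiplication network I take $\theta := p(n-1)/m$ and $L := \lceil 1/(2\theta)\rceil+1 \le 2+m/(2p(n-1))$, so that $\size(\tilde{\times}_\delta)\le c\,\delta^{-p(n-1)/m}$. I use $M := \lceil B\rceil+1$, which is allowed because $\norm{\Phi^g_\delta}_{L^\infty(Q_n)}\le\lceil B\rceil$ and $\Phi^A_\delta$ takes values in $[0,1]$. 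Lemmas~\ref{lem:5.3} and~\ref{lem:5.2} then give a total size $\lesssim\delta^{-p(n-1)/m}$. The depth is $1+\max\{\depth\Phi^g_\delta,\depth\Phi^A_\delta\}+2L+7$. Using $\lceil\log_2 k\rceil \le \lceil\log_2(mn)\rceil$, each term fits under $(5+\lceil\log_2(mn)\rceil)(25+2m/n+m/p(n-1))$. To parallelize subnetworks of unequal depth I pad them with identity layers $x=\rho(x)-\rho(-x)$, which changes the size only by an $\varepsilon$-independent amount. The weight bounds are polynomial in $1/\delta$ because every component's weights are, and composition and parallelization only copy weights.

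The second step is the error estimate, which I expect to be the main point needing care. Pointwise on $Q_n$,
\[
\abs{\Phi^f_\varepsilon - g\chi_A} \le \abs{\tilde{\times}_\delta(\Phi^g_\delta,\Phi^A_\delta)-\Phi^g_\delta\Phi^A_\delta} + \abs{\Phi^g_\delta}\,\abs{\Phi^A_\delta-\chi_A} + \abs{\chi_A}\,\abs{\Phi^g_\delta - g}.
\]
The first term is at most $\delta$ uniformly, so its $L^p(Q_n)$ norm is at most $\delta$ since $\lambda^n(Q_n)=1$. The second term is controlled using $\norm{\Phi^g_\delta}_{L^\infty}\le\lceil B\rceil$, giving $\lceil B\rceil\,\norm{\Phi^A_\delta-\chi_A}_{L^p}<\lceil B\rceil\delta$. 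The third term is at most $\norm{\Phi^g_\delta-g}_{L^p}<\delta$. The key point is that no $L^\infty$ control of $\Phi^A_\delta-\chi_A$ is needed: the product is split so that the uniformly bounded factor multiplies the $L^p$-small factor. By Minkowski's inequality the total error is below $(2+\lceil B\rceil)\delta$, so $\delta:=\varepsilon/(2+\lceil B\rceil)$ suffices. This changes only the constants $c,s$ and not the exponent.
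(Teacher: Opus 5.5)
Your proposal is correct and follows the paper's proof essentially verbatim. It uses the same network $\tilde{\times}\bullet(\Phi^g,\Phi^A)$ with $\theta=p(n-1)/m$, the same splitting $g\chi_A-\Phi^g\Phi^A=(g-\Phi^g)\chi_A+\Phi^g(\chi_A-\Phi^A)$ controlled by the uniform bound $\lceil B\rceil$ on $\Phi^g$, and the same rescaling $\varepsilon\mapsto\varepsilon/(2+\lceil B\rceil)$. Your aside about $A=Q_n$ ``up to null sets'' is unnecessary (read literally, $Q_n$ is not traceable) but harmless, since, like the paper, you prove the bound for an arbitrary $A\in\mathcal{K}^B_{m,n}$.
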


\begin{proof}
Let $f=g \cdot \chi_A$ for $g \in \mathcal{F}_{\lceil mn/p(n-1)\rceil,n}^B$ and $A \in \mathcal{K}_{m,n}^B$. Applying Theorem \ref{thm:mainapprox} to $A$ and Theorem \ref{thm:optimalapprox_3.1} to $g$ yields constants $c,s>0$ independent of $\varepsilon$ and ReLU neural networks $\Phi^A_\varepsilon$  and  $\Phi^g_\varepsilon$ such that
\begin{itemize}
    \item $\norm{\Phi_\varepsilon^g}_{L^\infty(Q_n)} \leq \lceil B\rceil$, $\norm{\Phi_\varepsilon^A}_{L^\infty(Q_n)} \leq 1$,
    \item $\norm{\Phi_\varepsilon^g}_{\max},\norm{\Phi_\varepsilon^A}_{\max} \leq \varepsilon^{-s}$,
    \item $\size(\Phi_\varepsilon^g), \size(\Phi_\varepsilon^A) \leq c \cdot \varepsilon^{-p(n-1)/m}$,
    \item $\depth(\Phi_\varepsilon^g)\leq (2+\lceil \log_2(\lceil mn/p(n-1)\rceil)\rceil)(11+m/p(n-1))$,
    \item $\depth(\Phi_\varepsilon^A) \leq \lceil \log_2(n)\rceil+(3+\lceil \log_2(m)\rceil)(14+2m/n)$,
    \item $\norm{g - \Phi_\varepsilon^g}_{L^p(Q_n)}, \norm{\chi_A - \Phi_\varepsilon^A}_{L^p(Q_n)} < \varepsilon.$
\end{itemize}

So $\depth(\Phi_\varepsilon^g),\depth(\Phi_\varepsilon^A) \leq (3+\lceil\log_2(mn)\rceil)(25+2m/n+m/p(n-1))$.
Since $\abs{\Phi_\varepsilon^g} \leq \lceil B \rceil$, multiplying $\Phi^A_\varepsilon$  and  $\Phi^g_\varepsilon$ can be approximated by $\tilde{\times}_\varepsilon :[-\lceil B \rceil, \lceil B \rceil] \to \R$ where $\tilde{\times}_\varepsilon$ as in Lemma \ref{lem:multapprox} with $\theta =p(n-1)/m$ and $L=\lfloor(2\theta)^{-1}\rfloor+1$ so that the size of $\tilde{\times}_\varepsilon$ does not exceed the sizes of $\Phi^A_\varepsilon$  and  $\Phi^g_\varepsilon$, which are all bounded by $c \cdot \varepsilon^{-p(n-1)/m}$ .
We claim that 
$$\Phi_\varepsilon^f := \tilde{\times}_\varepsilon \bullet(\Phi_\varepsilon^g,\Phi_\varepsilon^A)$$
does the job. To estimate $\norm{f-\Phi_\varepsilon^f}_{L^p(Q_n)}$, we calculate
\begin{align*}
\norm{f-\Phi_\varepsilon^f}_{L^p(Q_n)} 
& = \norm{g \cdot \chi_A-\tilde{\times}_\varepsilon \bullet(\Phi_\varepsilon^g,\Phi^A_\varepsilon)}_{L^p(Q_n)} \\
& \leq  \norm{g \cdot \chi_A-\Phi_\varepsilon^g\cdot\Phi^A_\varepsilon}_{L^p(Q_n)} \\
& + \norm{\Phi_\varepsilon^g\cdot \Phi^A_\varepsilon - \tilde{\times}_\varepsilon \bullet(\Phi_\varepsilon^g,\Phi^A_\varepsilon)}_{L^p(Q_n)} \\
& \leq \norm{g \cdot \chi_A-\Phi_\varepsilon^g\cdot \chi_A}_{L^p(Q_n)} \\
& + \norm{\Phi_\varepsilon^g\cdot \chi_A-\Phi_\varepsilon^g\cdot\Phi^A_\varepsilon}_{L^p(Q_n)}+\varepsilon \\
&\leq \norm{g -\Phi_\varepsilon^g}_{L^p(Q_n)} \\
& + \lceil B \rceil \norm{\chi_A-\Phi^A_\varepsilon}_{L^p(Q_n)} + \varepsilon \\
& < (2+\lceil B \rceil)\varepsilon
\end{align*}
so replacing $\varepsilon$ by $\varepsilon/(2+\lceil B \rceil)$ yields the claim considering $\Phi_{\varepsilon/(2+\lceil B\rceil)}^f$ instead of $\Phi_\varepsilon^f$ in the statement of the theorem.

For convenience, we give bounds on the architecture of $\Phi_\varepsilon^f$, and the same asymptotic bounds hold for $\Phi_{\varepsilon/(2+\lceil B\rceil)}^f$ at the price of adjusting the constants. First,
\begin{align*}
\depth(\Phi_\varepsilon^f) & \leq \depth(\tilde{\times}_\varepsilon)+\depth((\Phi_\varepsilon^g,\Phi_\varepsilon^A))+1 \\
& \leq 2L + 8 +  (3+\lceil\log_2(mn)\rceil)(25+2m/n+m/p(n-1)) \\
& \leq m/p(n-1) + (4+\lceil\log_2(mn)\rceil)(25+2m/n+m/p(n-1)) \\
& \leq (5+\lceil\log_2(mn)\rceil)(25+2m/n+m/p(n-1))
\end{align*}
Furthermore,
\begin{align*}
\size(\Phi_\varepsilon^f) & \leq 2\size(\tilde{\times}_\varepsilon)+2\size((\Phi_\varepsilon^g,\Phi_\varepsilon^A))\\ 
& \leq 2c\varepsilon^{-\theta}+4c\varepsilon^{-p(n-1)/m} 
 +(12+4\lceil\log_2(mn)\rceil)(25+2m/n+m/p(n-1)) \\
& \leq 6c \varepsilon^{-p(n-1)/m} + (12+4\lceil\log_2(mn)\rceil)(25+2m/n+m/p(n-1))\\
& \leq \tilde{c}\varepsilon^{-p(n-1)/m}
\end{align*}
for some constant $\tilde{c} = \tilde{c}(n,m,B,p) > 0$.  Also, $\norm{\Phi_\varepsilon^f}_{\max} \leq \varepsilon^{-\tilde{s}}$ for some $\tilde{s} = \tilde{s}(n,m,B,p) > 0$ as $\norm{\tilde{\times}_\varepsilon}_{\max} \leq \varepsilon^{-\hat{s}}$ for some $\hat{s}>0$ and the ReLU neural networks for parallelization as well as composition do not depend on $\varepsilon$. 
\end{proof}

For approximations of elements in $\mathcal{G}^{B,\ell}_{m,n,p}$ for general $\ell \in \N$, it remains to sum over the approximating ReLU neural networks of each single region.
The ReLU neural network expressing a sum of $\ell$ many ReLU neural networks is a special case of taking linear combinations \cite{petersen2025mathematicaltheorydeeplearning}, Lemma 5.4.

\begin{lemma}\label{lem:5.4}
Let $\Phi_1,\ldots,\Phi_\ell:\R^n \to \R$ be ReLU neural networks. Then there is a ReLU neural network $\sum(\Phi_1,\ldots,\Phi_\ell): \R^n \to \R$ such that 
\begin{itemize}
    \item $\depth(\sum(\Phi_1,\ldots,\Phi_\ell)) = \max_{i\in[\ell]}\depth(\Phi_i)$,
    \item $\width(\sum(\Phi_1,\ldots,\Phi_\ell)) \leq 2\ell \max_{i\in[\ell]} \width(\Phi_i)$,
    \item $\size(\sum(\Phi_1,\ldots,\Phi_\ell)) \leq 2\ell \max_{i\in[\ell]}\size(\Phi_i) + 2\ell\max_{i\in[\ell]}\depth(\Phi_i)$,
\end{itemize}
with $\sum(\Phi_1,\ldots,\Phi_\ell)(x) =  \Phi_1(x) + \cdots + \Phi_\ell(x)$.
\end{lemma}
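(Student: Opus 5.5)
The plan is to prove Lemma \ref{lem:5.4} by reducing it to the two building blocks already available, parallelization (Lemma \ref{lem:5.3}) and one final affine map, rather than building the network from scratch. First apply Lemma \ref{lem:5.3} to $\Phi_1,\ldots,\Phi_\ell$, which gives a network $(\Phi_1,\ldots,\Phi_\ell):\R^n\to\R^\ell$ with depth $\max_i\depth(\Phi_i)$, width at most $2\ell\max_i\width(\Phi_i)$, and size at most $2\ell\max_i\size(\Phi_i)+2\ell\max_i\depth(\Phi_i)$.

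The construction in Lemma \ref{lem:5.3} equalizes depths by padding the shallower networks with identity layers $x=\rho(x)-\rho(-x)$. Its output layer is therefore an affine map $x^{(L)}\mapsto \widetilde W x^{(L)}+\tilde b$, where $\widetilde W$ is block diagonal with one row block per $\Phi_i$.

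Next, \emph{do not compose} with a summation network via Lemma \ref{lem:5.2}. That would add a layer and double the size bound. Instead, replace the last affine map by $x^{(L)}\mapsto (1,\ldots,1)\widetilde W x^{(L)}+(1,\ldots,1)\tilde b$. The realization then becomes $\sum_i\Phi_i(x)$, and the hidden layers are untouched, so depth and width are inherited exactly.

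For the size, note that $(1,\ldots,1)\widetilde W$ is the stacking of the rows $W_i^{(L)}$ side by side, because the blocks occupy disjoint columns. Hence its number of nonzero entries is at most the number of nonzero entries of $\widetilde W$. Likewise, the new bias is a single scalar, while $\tilde b$ had up to $\ell$ entries. So the size does not increase, and the bound from Lemma \ref{lem:5.3} carries over.

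The only delicate point is the input dimension. Lemma \ref{lem:5.3} as cited is for shared input $\R^n$, which is exactly our situation, so nothing extra is needed. I expect the main obstacle to be justifying the column-disjointness of the output blocks in the parallelized network. This holds because, in the construction of \cite{petersen2025mathematicaltheorydeeplearning}, Section 5.1.3, all hidden layers after the first are block diagonal. Given that, summing rows cannot create new nonzeros, and the lemma follows.
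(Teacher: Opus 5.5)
Your argument is correct. The paper gives no proof of its own and only cites the linear-combination lemma of \cite{petersen2025mathematicaltheorydeeplearning}; your reduction is the natural derivation of that result: parallelize via Lemma \ref{lem:5.3}, then absorb $(1,\ldots,1)$ into the final affine layer, which leaves all hidden layers, and hence depth and width, unchanged.

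The step you flag as the main obstacle is not actually needed. For any $W\in\R^{\ell\times d}$, a nonzero entry of $(1,\ldots,1)W$ can only come from a nonzero column of $W$, so $(1,\ldots,1)W$ has at most as many nonzero entries as $W$, and the summed bias has at most as many as the original bias. So you only need Lemma \ref{lem:5.3} as stated, not the block-diagonal structure of its construction.
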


Our candidate for the approximation of $f := \sum_{i=1}^\ell g_i\chi_{A^i}\in \mathcal{G}_{m,n,p}^{B,\ell}$
is given by
$$\Phi_\varepsilon^f := \sum(\tilde{\times}\bullet(\Phi_\varepsilon^{g_1},\Phi^{A^1}_\varepsilon),\ldots,\tilde{\times}\bullet(\Phi_\varepsilon^{g_\ell},\Phi^{A^\ell}_\varepsilon)).$$ Estimating yields 
\begin{align*}
\norm{f-\Phi_\varepsilon^f}_{L^p(Q_n)} & \leq \sum_{i=1}^\ell \norm{f_i-\Phi_\varepsilon^i}_{L^p(Q_n)} \\
& \leq \ell \max_{i\in[\ell]} \norm{f_i-\Phi_\varepsilon^i}_{L^p(Q_n)} < \ell\varepsilon
\end{align*}
where $f_i := g_i\chi_{A^i} \in \mathcal{G}_{m,n,p}^{B,1}$ and $\Phi_\varepsilon^i$ is the ReLU approximation of $f_i$ from Theorem \ref{thm:smoothsingle}. Replacing $\varepsilon$ by $\varepsilon/\ell$ yields the statement.

Although studying the architecture of $\Phi_\varepsilon^f$ instead of $\Phi_{\varepsilon/\ell}^f$, a substitution of $\varepsilon$ by $\varepsilon/\ell$ only changes the constants and the asymptotic rates stay the same.
\begin{align*}
\depth(\Phi_\varepsilon^f) & = \max_{i\in[\ell]}\depth(\Phi_\varepsilon^i) \\
&\leq (5+\lceil\log_2(mn)\rceil)(25+2m/n+m/p(n-1))
\end{align*}
and
\begin{align*}
\size(\Phi_\varepsilon^f) 
& \leq  2\ell \max_{i\in[\ell]}\size(\Phi_\varepsilon^i) + 2\ell\max_{i\in[\ell]}\depth(\Phi_\varepsilon^i) \\
& \leq 2\ell c\varepsilon^{-p(n-1)/m} + 2\ell(5+\lceil\log_2(mn)\rceil)(25+2m/n+m/p(n-1)) \\
&\leq \tilde{c} \cdot \varepsilon^{-p(n-1)/m}
\end{align*}
for some constant $\tilde{c} = \tilde{c}(n,m,B,p,\ell) > 0$. Also, we get that $\norm{\Phi_\varepsilon^f}_{\max} \leq \varepsilon^{-\tilde{s}}$ for some constant $\tilde{s} = \tilde{s}(n,m,B,p,\ell) > 0$ as the summation ReLU neural network does not depend on $\varepsilon$.

Thus we proved the following approximation result.

\begin{theorem}
For $B > 0$, $\ell \in \N$, there exist constants $c= c(n,m,B,p,\ell) > 0$ and $s=s(n,m,B,p,\ell)>0$ such that for every $f \in \mathcal{G}^{B,\ell}_{m,n,p}$ and $0<\varepsilon < 1/2$, there is a ReLU neural network $\Phi_\varepsilon^f: \R^n \to \R$ with
\begin{itemize}
    \item $\norm{\Phi_\varepsilon^f}_{\max} \leq \varepsilon^{-s}$,
    \item $\size(\Phi_\varepsilon^f) \leq c \cdot \varepsilon^{-p(n-1)/m}$,
    \item $\depth(\Phi_\varepsilon^f) \leq (5+\lceil\log_2(mn)\rceil)(25+2m/n+m/p(n-1))$,
\end{itemize}
and $$\norm{f - \Phi_\varepsilon^f}_{L^p(Q_n)} \leq \varepsilon.$$
\end{theorem}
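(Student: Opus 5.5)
The statement is the summary of the construction just carried out, so the plan is to assemble the pieces already established. Fix $f=\sum_{i=1}^\ell g_i\chi_{A^i}\in\mathcal{G}^{B,\ell}_{m,n,p}$, where $g_i\in\mathcal{F}^B_{\lceil mn/p(n-1)\rceil,n}$ and $Q_n=\bigsqcup_i A^i$ with $A^i\in\mathcal{K}^B_{m,n}$. Write $f_i:=g_i\chi_{A^i}$. Each $f_i$ lies in $\mathcal{G}^{B,1}_{m,n,p}$. Apply Theorem \ref{thm:smoothsingle} to each $f_i$ at accuracy $\varepsilon/\ell$ to obtain networks $\Phi^i_{\varepsilon/\ell}=\tilde{\times}\bullet(\Phi^{g_i}_{\varepsilon/\ell},\Phi^{A^i}_{\varepsilon/\ell})$. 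Their weights are bounded by $(\varepsilon/\ell)^{-s}$, their size by $c(\varepsilon/\ell)^{-p(n-1)/m}$, and their depth by $(5+\lceil\log_2(mn)\rceil)(25+2m/n+m/p(n-1))$. The constants $c,s$ are uniform over the class, because Theorem \ref{thm:smoothsingle} gives constants depending only on $(n,m,B,p)$.

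Next, form the sum network $\Phi^f_\varepsilon:=\sum(\Phi^1_{\varepsilon/\ell},\ldots,\Phi^\ell_{\varepsilon/\ell})$ via Lemma \ref{lem:5.4}. The error bound follows from Minkowski's inequality:
$$\norm{f-\Phi^f_\varepsilon}_{L^p(Q_n)}\le\sum_{i=1}^\ell\norm{f_i-\Phi^i_{\varepsilon/\ell}}_{L^p(Q_n)}<\varepsilon.$$
Lemma \ref{lem:5.4} also gives the architecture bounds. The depth equals the maximum of the individual depths, so the stated depth bound is preserved. The size is at most $2\ell\max_i\size+2\ell\max_i\depth$, which is bounded by $\tilde c\,\varepsilon^{-p(n-1)/m}$ because $\ell^{p(n-1)/m}$ is absorbed into the constant and the depth term is an $\varepsilon$-independent additive constant dominated by $\varepsilon^{-p(n-1)/m}\ge 1$.

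For the weights, the sum construction only copies the weights of the summands and adds output weights equal to $1$. Hence $\norm{\Phi^f_\varepsilon}_{\max}\le\max(1,\ell^s\varepsilon^{-s})$. Since $\varepsilon<1/2$, this is at most $\varepsilon^{-\tilde s}$ for a suitable $\tilde s=\tilde s(n,m,B,p,\ell)$, for instance $\tilde s=s(1+\log_2\ell)$, using $\ell\le\varepsilon^{-\log_2\ell}$.

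The degenerate cases are trivial. If $\ell=0$, the class is empty, since $Q_n\neq\emptyset$. If $\ell=1$, the statement is exactly Theorem \ref{thm:smoothsingle}.

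The only delicate point is the rescaling $\varepsilon\mapsto\varepsilon/\ell$ together with the weight bound. The polynomial growth $\ell^s$ must be converted into a power of $\varepsilon^{-1}$, and this is where $\varepsilon<1/2$ is used. Everything else is bookkeeping with constants that depend on $(n,m,B,p,\ell)$ but not on $f$.
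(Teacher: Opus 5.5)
Your proposal is correct and follows the paper's argument essentially step for step: approximate each summand $g_i\chi_{A^i}$ via Theorem~\ref{thm:smoothsingle}, combine the resulting networks with the summation network of Lemma~\ref{lem:5.4}, bound the error with Minkowski's inequality, and rescale $\varepsilon\mapsto\varepsilon/\ell$. Your explicit conversion of $\ell^{s}\varepsilon^{-s}$ into $\varepsilon^{-\tilde s}$ using $\varepsilon<1/2$ is a detail the paper leaves implicit.

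One small wording point, which you share with the paper: $g_i\chi_{A^i}$ is not literally in $\mathcal{G}^{B,1}_{m,n,p}$, because that class requires $A^i=Q_n$. However, the proof of Theorem~\ref{thm:smoothsingle} only uses $g_i\in\mathcal{F}^B_{\lceil mn/p(n-1)\rceil,n}$ and $A^i\in\mathcal{K}^B_{m,n}$, so it applies unchanged.
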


\section{Convergence rates for statistical learning of traceable sets}\label{sec:estim}

Lastly, we want to describe upper bounds for the minimax error when learning sets from $\mathcal{K}_{m,n}^{B,\ell}$ via empirical risk minimization on finitely many samples over the ReLU neural network class from Theorem \ref{thm:mainapprox} where the hinge loss measures the empirical risk. Following the setting considered in \cite{petersen2021optimal}, we use their results stated on the domain $[0,1]^n$ translated to $Q_n$.

For our binary classification task, suppose that we are given a sample of $N$ data points $S:= (x_i,y_i)_{i=1}^N$ where $x_i$ are generated by the uniform Lebesgue measure $\lambda^n$ on $Q_n$ and $y_i = \chi_\Omega(x_i)$ for some given unknown $\Omega \in \mathcal{K}_{m,n}^{B,\ell}$ where $\ell\in\N,B >0$ are fixed. Based on the sample $S$, we want to predict $\chi_\Omega$ and unravel the connection between sample size and prediction error.

Formally, we consider measurable algorithms $Alg$ mapping each labeled sample $S \in \Lambda_N = (Q_n \times \{0,1\})^N$ to a prediction $Alg(S) \in L^2(Q_n)$ only taking values in $\{0,1\}$.
The minimax error for sample size $N$ is then given by the smallest average worst prediction error over all possible algorithms predicting any $\Omega \in \mathcal{K}_{m,n}^{B,\ell}$, defined as
$$\varepsilon_N(\mathcal{K}_{m,n}^{B,\ell}) := \inf_{Alg:\Lambda_N \to L^2(Q_n)} \sup_{\Omega \in \mathcal{K}_{m,n}^{B,\ell}} \mathbb{E}_{(X_1,\ldots,X_N)\sim (\lambda^n)^N}\norm{Alg((X_i,\chi_\Omega(X_i))_{i=1}^N)-\chi_\Omega}^2_{L^2}$$
where $(\lambda^n)^N$ is the product measure.

For an upper bound, we let the algorithm choose measurably in a predetermined way for each sample $S$ an empirical risk minimizer with respect to the hinge loss $t \mapsto \max\{0, 1-t\}$ over the compact $L^p$ approximating class of ReLU neural networks coming from Theorem \ref{thm:mainapprox}. 
Since an element $\Phi \in \mathcal{NN}(n,L,W,B)$ takes values in $[0,1]$ and not only $\{0,1\}$, we need a decision rule for the prediction of the label. 
The most convenient choice would be to take $y_i = 0$ if $\Phi(x_i)<1/2$ and $y_i = 1$ if $\Phi(x_i)\geq 1/2$. Formally, associate to each such $\Phi$ the measurable function $\chi_{[1/2, \infty)}\circ\Phi \in L^2(Q_n)$, taking values in $\{0,1\}$ as expected for the algorithm.

The hinge risk of $\Phi$ with respect to $\Omega$ is obtained by the average of taking the product of the $[-1,1]$-valued functions $2\chi_\Omega-1$ and $2\Phi-1$ and applying the hinge loss, i.e.
$$\mathcal{E}_{\Omega}(\Phi) = \mathbb{E}_{X \sim \lambda^n} [1-(2\chi_\Omega(X)-1)(2\Phi(X) -1)].$$
Explicitly, the prediction $\chi_{[1/2, \infty)}\circ\Phi_S$ given by the algorithm minimizing the hinge risk satisfies
$$\Phi_S = \argmin_{\Phi \in \mathcal{F}_N} 1-\frac{1}{N} \sum_{i=1}^N (2y_i-1)(2\Phi(x_i)-1)$$
for the sample $S \in \Lambda_N$ over the class $\mathcal{F}_N$ of ReLU neural networks we describe below. 

The following lemma bounds the hinge risk of the misclassification of the algorithm.

\begin{lemma}\label{lem:estim1}
Let $B>0$, $\ell \in \N$, $p\geq 1$, and $0<\varepsilon<1/2$. For $\Omega \in \mathcal{K}_{m,n}^{B,\ell}$ and $\Phi_\varepsilon^\Omega$ as in Theorem \ref{thm:mainapprox}, we have $$\mathcal{E}_{\Omega}(\Phi_\varepsilon^\Omega) \leq 2\varepsilon.$$
\end{lemma}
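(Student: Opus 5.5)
The plan is a pointwise computation of the hinge integrand, followed by an application of Hölder's inequality and the $L^p$ bound from Theorem \ref{thm:mainapprox}. Write $Y:=2\chi_\Omega-1\in\{-1,1\}$ and $Z:=2\Phi_\varepsilon^\Omega-1$. Since $\Phi_\varepsilon^\Omega$ maps into $[0,1]$, we have $Z\in[-1,1]$, and therefore $1-YZ\geq 0$. This means the hinge loss $\max\{0,1-YZ\}$ coincides with $1-YZ$, which is consistent with the formula for $\mathcal{E}_\Omega$ given above.

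Next we evaluate the integrand pointwise. On $\Omega$ we have $Y=1$, so $1-YZ=2-2\Phi_\varepsilon^\Omega=2\abs{\chi_\Omega-\Phi_\varepsilon^\Omega}$. Off $\Omega$ we have $Y=-1$, so $1-YZ=2\Phi_\varepsilon^\Omega=2\abs{\chi_\Omega-\Phi_\varepsilon^\Omega}$. Hence
\begin{equation*}
\mathcal{E}_\Omega(\Phi_\varepsilon^\Omega)=2\,\mathbb{E}_{X\sim\lambda^n}\abs{\chi_\Omega(X)-\Phi_\varepsilon^\Omega(X)}=2\norm{\chi_\Omega-\Phi_\varepsilon^\Omega}_{L^1(Q_n)},
\end{equation*}
where we use that $\lambda^n(Q_n)=1$, so the uniform distribution is just Lebesgue measure on $Q_n$.

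Finally, since $Q_n$ is a probability space, Hölder's (or Jensen's) inequality gives $\norm{\cdot}_{L^1(Q_n)}\leq\norm{\cdot}_{L^p(Q_n)}$ for all $p\geq 1$. Theorem \ref{thm:mainapprox} supplies $\norm{\Phi_\varepsilon^\Omega-\chi_\Omega}_{L^p(Q_n)}<\varepsilon$, and combining the two yields $\mathcal{E}_\Omega(\Phi_\varepsilon^\Omega)\leq 2\varepsilon$, which is in fact strict.

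There is no real obstacle in this argument. The only point that needs care is the observation that the range $[0,1]$ of $\Phi_\varepsilon^\Omega$ makes the hinge loss linear, since $1-YZ$ never becomes negative. If a normalization of the risk is intended in which the minimal value differs from zero, the same pointwise identity still applies after subtracting that constant.
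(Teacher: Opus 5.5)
Your proof is correct and matches the paper's argument. Like the paper, you split the integrand over $\Omega$ and its complement to get $\mathcal{E}_\Omega(\Phi_\varepsilon^\Omega)=2\norm{\chi_\Omega-\Phi_\varepsilon^\Omega}_{L^1(Q_n)}$, then combine $\lambda^n(Q_n)=1$, H\"older's inequality and the $L^p$ bound of Theorem \ref{thm:mainapprox}; your extra remark that $1-YZ\geq 0$, so the hinge truncation is inactive, is a harmless clarification the paper leaves implicit.
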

\begin{proof}
Abbreviating $\Omega^c := Q_n \setminus \Omega$ and $\Phi := \Phi_\varepsilon^\Omega$  yields
\begin{align*}
\mathcal{E}_{\Omega}(\Phi) & = \mathbb{E}_{X\sim \lambda^n}[1-(2\chi_\Omega(X)-1)(2\Phi(X)-1)] \\
& = \int_{Q_n} (1-(2\chi_\Omega-1)(2\Phi-1)) d\lambda^n \\
& = \int_\Omega (1-(2\Phi-1)) d\lambda^n + \int_{\Omega^c} (1-(1-2\Phi)) d\lambda^n \\
& = 2\int_\Omega (1-\Phi) d\lambda^n + 2\int_{\Omega^c} \Phi d\lambda^n\\
& = 2 \int_{Q_n} \abs{\chi_\Omega-\Phi}d\lambda^n \\
& = 2 \norm{\chi_\Omega-\Phi}_{L^1(Q_n)} \leq 2 \norm{\chi_\Omega-\Phi}_{L^p(Q_n)} \leq 2\varepsilon,
\end{align*}
The last line invokes Hölder's inequality for $q = p/(p-1)$ and $\lambda^n(Q_n) = 1$.
\end{proof}

In what follows, the main ingredient is \cite{petersen2021optimal}, Theorem 5.6.
Before we recall the central result, a reformulation of Theorem \ref{thm:mainapprox} in terms of sample size is needed.
Rephrasing the $L^p$ approximating ReLU neural networks up to error $0<\varepsilon<1/2$ from Theorem \ref{thm:mainapprox} in terms of the sample size $N:=\lceil\varepsilon^{-(m+pn-p)/m}\rceil$ yields constants $c = c(n,m,\ell,B,p)>0$ and $s = s(n,m,\ell,B,p)>0$ such that for every $\Omega \in \mathcal{K}_{m,n}^{B,\ell}$ and $N \in \N^*$, there is some $\Phi_N \in \mathcal{NN}(n,L,cN^{p(n-1)/(m+pn-p)},N^{sm/(m+pn-p)})$ with $L:=\lceil \log_2((\ell+1)n)\rceil+(3+\lceil \log_2(m)\rceil)(14+2m/n)$ and
$$\norm{\chi_\Omega-\Phi_N}_{L^p(Q_n)} < N^{-m/(m+pn-p)}.$$
For each $N\in\N^*$, we define the function class 
$$\mathcal{F}_N:= \mathcal{NN}(n,L_N,W_N,B_N) := \mathcal{NN}(n,L,cN^{p(n-1)/(m+pn-p)},N^{sm/(m+pn-p)}).$$
To appreciate the forthcoming learning result, we recall that a \emph{$\delta$-net} of $\mathcal{F}_N$ is some finite $\mathcal{G}_N \subseteq \mathcal{F}_N$ such that $\mathcal{F}_N \subseteq \mathcal{G}_N+\{f\in\mathcal{F}_N: \norm{f}_\infty < \delta\}$ and the \emph{covering number} of $\mathcal{F}_N$ is the minimal size of a $\delta$-net of $\mathcal{F}_N$. The \emph{covering entropy} $V_{\mathcal{F}_N,\norm{.}_{L^\infty}}(\delta)$ is the logarithm of the covering number of $\mathcal{F}_N$. The following theorem summarizes the remaining checklist from \cite{petersen2021optimal}, Theorem 5.6, adapted to $\mathcal{F}_N$, giving the learning result.

\begin{theorem}\label{thm:5.6optimal}
Let $\Omega \in \mathcal{K}_{m,n}^{B,\ell}$. If
  \begin{enumerate}[label=(\roman*)]
    \item \label{enu:ApproximationCondition}
          there exists a positive sequence $(a_N)_{N\in \N}$
          with $a_N = \mathcal{O}(N^{-a_0})$ as $N \to \infty$ for some $a_0 > 0$
          such that
          \[
            \forall \, N \in \N^*: \quad
              \exists \, \Phi_N \in \mathcal{F}_N : \quad
                \mathcal{E}_\Omega(\Phi_N)
                \leq a_N
            ,
          \]

    \item \label{enu:EntropyCondition}
          there exists $C > 0$ and $(\delta_N)_{N\in \N} \subseteq (0, \infty)$ such that
          \[
            V_{\mathcal{F}_N, \| \cdot \|_{L^\infty}} (\delta_N)
            \leq C \cdot N \cdot \delta_N
            \qquad \forall \, N \in \N^*,
          \] 

    \item \label{enu:TechnicalCondition}
          there exists $\iota > 0$ such that the sequence $(\eps_N)_{N\in \N} \subseteq \R$
          with $\eps_N := \max\{ a_N, \delta_N\}$, $N \in \N^*$, satisfies 
          \begin{equation}
            N^{1-\iota} \cdot \eps_N
            \gtrsim 1
            \qquad \forall \, N \in \N^* ,
            \label{eq:TechnicalCondition}
          \end{equation}
  \end{enumerate}
  then
  \[
    \mathbb{E}_S
    \bigl[
      \lambda^n
      \big(
        \big\{
          x \in Q_n
          \,\,\colon\,\,
          \chi_\Omega(x) \neq \chi_{[1/2,\infty)} \circ \Phi_S (x)
        \big\}
      \big)
    \bigr]
    \lesssim \eps_N,
  \]
  where the implied constant is independent of $\Omega$ and only depends on the implied
  constants from Conditions \ref{enu:ApproximationCondition}, \ref{enu:EntropyCondition}, and \ref{enu:TechnicalCondition}.
\end{theorem}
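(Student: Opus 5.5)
The plan is to derive this statement directly from \cite{petersen2021optimal}, Theorem 5.6, by checking that our setting is one of the settings covered there. I will not reprove the underlying oracle inequality. First I transport everything from $Q_n$ to $[0,1]^n$ via the translation $\tau(x) := x + (1/2,\ldots,1/2)$. Then $\lambda^n$ on $Q_n$ is pushed forward to the uniform measure on $[0,1]^n$, and $\chi_\Omega\circ\tau^{-1} = \chi_{\tau(\Omega)}$. For every $\Phi\in\mathcal{F}_N$, the precomposition $\Phi\circ\tau^{-1}$ is again a ReLU neural network. It has the same depth, at most $n$ additional nonzero weights (from the first-layer bias), and weights bounded by $\max\{B_N, nB_N+1\}$. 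These changes only alter the constants in $W_N, B_N$, and the $L^\infty$-covering entropy is invariant under composition with the bijection $\tau$. The hinge risk $\mathcal{E}_\Omega$, the empirical hinge risk and the misclassification set all transform compatibly, so it suffices to prove the statement on $[0,1]^n$.

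Next I match the hypotheses with those of \cite{petersen2021optimal}, Theorem 5.6. The labels are deterministic, $y=\chi_\Omega(x)$, so after the change of variables $y\mapsto 2y-1\in\{\pm1\}$ and $\Phi\mapsto 2\Phi-1\in[-1,1]$ we are in their noiseless hinge-loss setting. On $[-1,1]$-valued functions the hinge loss is $1-(2y-1)(2\Phi-1)$, exactly as in our definition of $\mathcal{E}_\Omega$. In this setting the Bayes hinge risk is $0$, attained by $2\chi_\Omega-1$. The comparison (Zhang-type) inequality then gives
\[
\lambda^n\bigl(\{\chi_\Omega\neq\chi_{[1/2,\infty)}\circ\Phi\}\bigr)\le \mathcal{E}_\Omega(\Phi).
\]
This holds pointwise: wherever the sign of $2\Phi-1$ is wrong, the hinge loss is at least $1$. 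Hence bounding the expected misclassification reduces to bounding the expected hinge risk of $\Phi_S$. Conditions (i)--(iii) are precisely the approximation, entropy and rate hypotheses of their theorem, with $a_N$ controlling the approximation error and $\delta_N$ the estimation error.

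The core of their argument, which I will invoke rather than redo, is an oracle inequality for the empirical risk minimizer. I will write
\[
\mathcal{E}_\Omega(\Phi_S)\le 2\,\mathcal{E}_\Omega(\Phi_N) + \bigl(\text{deviation over a } \delta_N\text{-net}\bigr).
\]
The deviation term is controlled by a Bernstein inequality combined with a union bound over the net. Because the Bayes risk is $0$, the variance of the excess loss is bounded by a multiple of its mean, and this is what yields the fast rate $V/N$ rather than $\sqrt{V/N}$. Condition (ii) converts $V/N$ into $\delta_N$, and (iii) ensures that the logarithmic and union-bound terms are absorbed into $\eps_N$. The net step costs only $O(\delta_N)$, since the hinge loss is $2$-Lipschitz in $\Phi$.

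The main obstacle is measurability: the map $S\mapsto\Phi_S$ must be measurable and the expectation must be well defined. I would handle this as the paper's setup suggests. $\mathcal{F}_N$ is parametrized by a compact set of weights on which the realization map is continuous, so a measurable selection of an empirical risk minimizer exists, for instance by a measurable-selection theorem for the argmin of a Carathéodory function over a compact parameter set. With this in place, all constants depend only on those in (i)--(iii), and in particular are independent of $\Omega$.
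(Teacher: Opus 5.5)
Your proposal is correct and takes the same route as the paper. The paper likewise does not reprove the result; it invokes \cite{petersen2021optimal}, Theorem 5.6, transported from $[0,1]^n$ to $Q_n$, using that the hinge risk, the misclassification measure and the $L^\infty$ covering entropy are unchanged under the translation $\tau$.

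One small slip in your translation step: precomposing with $\tau^{-1}$ can create up to $d_1$ (the width of the first hidden layer) new nonzero first-layer bias entries, not $n$. This does not matter, because conditions (i)--(iii) are stated directly for $\mathcal{F}_N$ and are invariant under $\tau$, so the size and weight bookkeeping is not needed at all.
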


We verify the different items of Theorem \ref{thm:5.6optimal} to infer its conclusion.

(i) By Lemma \ref{lem:estim1}, $$\mathcal{E}_{\Omega}(\Phi_\varepsilon^\Omega) \leq 2\varepsilon = 2 N^{-m/(m+pn-p)} =: a_N.$$
showing the first assumption with $a_N = \mathcal{O}(N^{-m/(m+pn-p)})$.

(ii) By \cite{grohs2024proof}, Lemma 6.1, for each $0<\delta\leq 1$ the $\delta$-covering number of $\mathcal{F}_N$ is bounded by
$$(44/\delta \cdot L^4 \cdot(\lceil B_N \rceil\max\{n,W_N\})^{L+1})^{W_N},$$
so the covering entropy satisfies 
$$V_{\mathcal{F}_N,\norm{.}_{L^\infty}}(\delta)  \leq W_N\cdot(\log(44L^4)+\log(1/\delta)+(L+1)\log(\lceil B_N\rceil)+(L+1)\log(\max \{n,W_N\})).$$
Hence for every $0<\kappa<p(n-1)/(m+pn-p)$, the following estimate can be carried out for $0<\delta_N:= N^{\kappa-m/(m+pn-p)}\leq 1$ :
\begin{align*}
V_{\mathcal{F}_N,\norm{.}_{L^\infty}}(\delta_N)  & \leq cN^{p(n-1)/(m+pn-p)}(\log(44L^4) + m/(m+pn-p)\log N -\kappa\log N \\
& + (L+1)sm/(m+pn-p)\log N \\
& + (L+1)\max\{ \log n,\log(c) + p(n-1)/(m+pn-p) \log N\}) \\
& \leq \tilde{c} N^{p(n-1)/(m+pn-p)}\log N \leq \tilde{c} N \delta_N
\end{align*}
for some constant $\tilde{c}=\tilde{c}(n,m,p,s,c,\kappa) > 0$ independent from $N$.
Here we tacitly used that covering numbers are translation invariant as \cite{grohs2024proof} considers $[0,1]^n$ instead of $Q_n$.

(iii) The number $\varepsilon_N:= \max\{a_N,\delta_N\} = \max\{2N^{-m/(m+pn-p)}, N^{\kappa-m/(m+pn-p)}\}$ satisfies for $\iota := p(n-1)/(m+pn-p)-\kappa > 0$ that
$$\varepsilon_N \cdot N^{1-\iota} \gtrsim 1.$$

Hence Theorem \ref{thm:5.6optimal} yields $$\mathbb{E}_S\lambda^n(\{ x\in Q_n:\chi_\Omega(x) \neq \chi_{[1/2,\infty)} \circ \Phi_S (x)\}) \lesssim  N^{\kappa-m/(m+pn-p)},$$
which proves Theorem \ref{thm:statlearn}.

\subsection*{Final remarks}

To conclude, we give a brief informal treatment of a possible extension of our theorems. In the (excluded) case $m=0$ of Theorem \ref{thm:mainapprox}, it also makes sense to consider Lipschitz extensions of the signature datum instead of assuming extendability of traceable sets. By \cite{pawlucki2008lipschitz}, any open definable set $A \subseteq Q_n$ can be partitioned into finitely many sets $S^1,\ldots,S^k, D \subseteq Q_n$ such that $D$ is a null-set and each $S^j$ has the following property, stated in our terminology:
\begin{quote}
There exists a permutation of coordinates $\lambda_j(x)=(x_{\sigma_j(1)},\ldots,x_{\sigma_j(n)})$ where $\sigma_j:[n] \to [n]$ is a bijection such that $\lambda_j(S^j)$ is $1$-traceable with fundament $S^j_0,\ldots,S^j_n$ and signature datum $(\{f^j_i,g^j_i\}_{i\in[n]})$ satisfying $\norm{f_i^j}_{W^{1,\infty}(S^j_{i-1})},\norm{g_i^j}_{W^{1,\infty}(S^j_{i-1})} < \infty$.
\end{quote}
By \cite{federer2014geometric} (5.1.4), the maps $f^j_i$ and $g^j_i$ can be extended to Lipschitz maps on $Q_{i-1}$, which allows to consider horizon functions $\mathrm{F}_i^j$ and $\mathrm{G}_i^j$ of $\lambda_j(S^j)$, following Section \ref{sec:approxtrace}. For large enough $B>0$, the original class $\mathcal{HF}_{1,n,B}$ in \cite{petersen2018optimal} contains $\mathrm{F}_i^j$ and $\mathrm{G}_i^j$, so we can further follow Section \ref{sec:approxtrace} and construct the same ReLU neural networks, replacing $m$ by $1$. By multiplying the first layer of the resulting ReLU neural network which approximates $\chi_{\lambda_j(S^j)}$ with a suitable permutation matrix, we get that $\chi_{S^j}$ can be approximated up to $L^p$ error $\epsilon$ with $\mathcal{O}(\varepsilon^{-n(p-1)})$ many weights, all bounded by a polynomial in $1/\epsilon$ and a fixed number of layers. The construction of a ReLU neural network approximating $\chi_A$ is analogous to the last part of Section \ref{sec:approxtrace}. This approximating class could then be used to derive the learning rate $N^{-1/(1+pn-p)}$
up to an arbitrarily small polynomial factor $N^\kappa$.

\section*{Acknowledgements}

We want to thank Allen Gehret for useful discussions and helpful insights.

\printbibliography

@misc{petersen2025mathematicaltheorydeeplearning,
      title={Mathematical theory of deep learning}, 
      author={Philipp Petersen and Jakob Zech},
      year={2026},
      eprint={2407.18384},
      archivePrefix={arXiv},
      primaryClass={cs.LG},
      url={https://arxiv.org/abs/2407.18384}, 
}

@article{petersen2018optimal,
  title={Optimal approximation of piecewise smooth functions using deep ReLU neural networks},
  author={Petersen, Philipp and Voigtlaender, Felix},
  journal={Neural Networks},
  volume={108},
  pages={296--330},
  year={2018},
  publisher={Elsevier}
}

@article{garcia2025high,
  title={High-dimensional classification problems with Barron regular boundaries under margin conditions},
  author={Garc{\'\i}a, Jonathan and Petersen, Philipp},
  journal={Neural Networks},
  pages={107898},
  year={2025},
  publisher={Elsevier}
}

@article{caragea2023neural,
  title={Neural network approximation and estimation of classifiers with classification boundary in a Barron class},
  author={Caragea, Andrei and Petersen, Philipp and Voigtlaender, Felix},
  journal={The Annals of Applied Probability},
  volume={33},
  number={4},
  pages={3039--3079},
  year={2023},
  publisher={Institute of Mathematical Statistics}
}

@article{meyer2023optimal,
  title={Optimal convergence rates of deep neural networks in a classification setting},
  author={Meyer, Joseph T},
  journal={Electronic Journal of Statistics},
  volume={17},
  number={2},
  pages={3613--3659},
  year={2023},
  publisher={The Institute of Mathematical Statistics and the Bernoulli Society}
}

@article{petersen2021optimal,
  title={Optimal learning of high-dimensional classification problems using deep neural networks},
  author={Petersen, Philipp and Voigtlaender, Felix},
  journal={arXiv preprint arXiv:2112.12555},
  year={2021}
}

@article{kim2021fast,
  title={Fast convergence rates of deep neural networks for classification},
  author={Kim, Yongdai and Ohn, Ilsang and Kim, Dongha},
  journal={Neural Networks},
  volume={138},
  pages={179--197},
  year={2021},
  publisher={Elsevier}
}

@article{grohs2024proof,
  title={Proof of the theory-to-practice gap in deep learning via sampling complexity bounds for neural network approximation spaces},
  author={Grohs, Philipp and Voigtlaender, Felix},
  journal={Foundations of Computational Mathematics},
  volume={24},
  number={4},
  pages={1085--1143},
  year={2024},
  publisher={Springer}
}

@article{lecun2015deep,
  title={Deep learning},
  author={LeCun, Yann and Bengio, Yoshua and Hinton, Geoffrey},
  journal={nature},
  volume={521},
  number={7553},
  pages={436--444},
  year={2015},
  publisher={Nature Publishing Group UK London}
}

@article{bareilles2025deep,
  title={Deep Learning as the Disciplined Construction of Tame Objects},
  author={Bareilles, Gilles and Gehret, Allen and Aspman, Johannes and Lep{\v{s}}ov{\'a}, Jana and Mare{\v{c}}ek, Jakub},
  journal={arXiv preprint arXiv:2509.18025},
  year={2025}
}

@article{ma2022barron,
  title={The Barron space and the flow-induced function spaces for neural network models},
  author={Ma, Chao and Wu, Lei and others},
  journal={Constructive Approximation},
  volume={55},
  number={1},
  pages={369--406},
  year={2022},
  publisher={Springer}
}

@book{hils2019first,
  title={A first journey through logic},
  author={Hils, Martin and Loeser, Fran{\c{c}}ois},
  volume={89},
  year={2019},
  publisher={American Mathematical Soc.}
}

@book{Dries_1998, 
    place={Cambridge}, 
    series={London Mathematical Society Lecture Note Series}, 
    title={Tame Topology and O-minimal Structures}, 
    publisher={Cambridge University Press}, 
    author={Dries, L. P. D. van den}, 
    year={1998}, 
    collection={London Mathematical Society Lecture Note Series}
}

@book{coste1999introduction,
  title={An introduction to o-minimal geometry},
  author={Coste, Michel},
  year={1999}
}

@article{wilkie1999theorem,
  title={A theorem of the complement and some new o-minimal structures},
  author={Wilkie, Alex J},
  journal={Selecta Mathematica},
  volume={5},
  number={4},
  pages={397--421},
  year={1999},
  publisher={Springer}
}

@article{rolin2003quasianalytic,
  title={Quasianalytic Denjoy-Carleman classes and o-minimality},
  author={Rolin, J-P and Speissegger, Patrick and Wilkie, Alex},
  journal={Journal of the American Mathematical Society},
  volume={16},
  number={4},
  pages={751--777},
  year={2003}
}

@article{van1994elementary,
  title={The elementary theory of restricted analytic fields with exponentiation},
  author={van den Dries, Lou and Macintyre, Angus and Marker, David},
  journal={Annals of Mathematics},
  volume={140},
  number={1},
  pages={183--205},
  year={1994},
  publisher={JSTOR}
}

@incollection{miller2012basics,
  title={Basics of o-minimality and Hardy fields},
  author={Miller, Chris},
  booktitle={Lecture notes on o-minimal structures and real analytic geometry},
  pages={43--69},
  year={2012},
  publisher={Springer}
}

@article{pawlucki2008lipschitz,
  title={Lipschitz cell decomposition in o-minimal structures I},
  author={Paw{\l}ucki, Wies{\l}aw},
  journal={Illinois Journal of Mathematics},
  volume={52},
  number={3},
  pages={1045--1063},
  year={2008},
  publisher={Duke University Press}
}

@article{LoiTastratifications,
author = {Loi, Ta},
year = {1998},
month = {06},
pages = {},
title = {Verdier and strict Thom stratifications in o-minimal structures},
volume = {42},
journal = {Illinois Journal of Mathematics - ILL J MATH},
doi = {10.1215/ijm/1256045049}
}

@article{fischer2007minimal,
  title={O-minimal $\Lambda$m-regular stratification},
  author={Fischer, Andreas},
  journal={Annals of Pure and Applied Logic},
  volume={147},
  number={1-2},
  pages={101--112},
  year={2007},
  publisher={Elsevier}
}

@article{karpinski1997polynomial,
  title={Polynomial bounds for VC dimension of sigmoidal and general Pfaffian neural networks},
  author={Karpinski, Marek and Macintyre, Angus},
  journal={Journal of Computer and System Sciences},
  volume={54},
  number={1},
  pages={169--176},
  year={1997},
  publisher={Elsevier}
}

@article{chase2019model,
  title={Model theory and machine learning},
  author={Chase, Hunter and Freitag, James},
  journal={Bulletin of Symbolic Logic},
  volume={25},
  number={3},
  pages={319--332},
  year={2019},
  publisher={Cambridge University Press}
}

@article{laskowski1992vapnik,
  title={Vapnik-Chervonenkis classes of definable sets},
  author={Laskowski, Michael C},
  journal={Journal of the London Mathematical Society},
  volume={2},
  number={2},
  pages={377--384},
  year={1992},
  publisher={London Mathematical Society}
}

@article{aschenbrenner2017asymptotic,
  title={Asymptotic Differential Algebra and Model Theory of Transseries:(AMS-195)},
  author={Aschenbrenner, Matthias and van der Hoeven, Joris and Van den Dries, Lou},
  year={2017},
  publisher={Princeton University Press}
}

@misc{krapp2025measurabilityfundamentaltheoremstatistical,
      title={Measurability in the Fundamental Theorem of Statistical Learning}, 
      author={Lothar Sebastian Krapp and Laura Wirth},
      year={2025},
      eprint={2410.10243},
      archivePrefix={arXiv},
      primaryClass={cs.LG},
      url={https://arxiv.org/abs/2410.10243}, 
}

@misc{kratsios2026feedforwardneuralnetworkdefinable,
      title={Every Feedforward Neural Network Definable in an o-Minimal Structure Has Finite Sample Complexity}, 
      author={Anastasis Kratsios and Gregory Cousins and Haitz Sáez de Ocáriz Borde and Bum Jun Kim and Simone Brugiapaglia},
      year={2026},
      eprint={2605.07097},
      archivePrefix={arXiv},
      primaryClass={stat.ML},
      url={https://arxiv.org/abs/2605.07097}, 
}

@book{hirsch2012differential,
  title={Differential topology},
  author={Hirsch, Morris W},
  volume={33},
  year={2012},
  publisher={Springer Science \& Business Media}
}

@book{rockafellar1998variational,
  title={Variational analysis},
  author={Rockafellar, Ralph Tyrrell and Wets, Roger JB},
  year={1998},
  publisher={Springer}
}

@incollection{whitney1992analytic,
  title={Analytic extensions of differentiable functions defined in closed sets},
  author={Whitney, Hassler},
  booktitle={Hassler Whitney Collected Papers},
  pages={228--254},
  year={1992},
  publisher={Springer}
}

@book{federer2014geometric,
  title={Geometric measure theory},
  author={Federer, Herbert},
  year={2014},
  publisher={Springer}
}

\end{document}